\numberwithin{equation}{section}
\def\myclearpage{}
\newcommand{\beq}{\begin{equation}}
\newcommand{\eeq}{\end{equation}}
\newcommand{\beqs}{\begin{equation*}}
\newcommand{\eeqs}{\end{equation*}}
\newcommand{\ba}{\begin{array}}
\newcommand{\ea}{\end{array}}
\newcommand{\beas}{\begin{eqnarray*}}
\newcommand{\eeas}{\end{eqnarray*}}
\newcommand{\bea}{\begin{eqnarray}}
\newcommand{\eea}{\end{eqnarray}}
\newcommand{\bal}{\begin{align}}
\newcommand{\eal}{\end{align}}
\newcommand{\bals}{\begin{align*}}
\newcommand{\eals}{\end{align*}}
\newcommand{\R}{\ensuremath{\mathbb R}}
\newcommand{\norm}[1]{\| {#1} \|}
\newcommand{\bds}{\begin{displaystyle}}
\newcommand{\eds}{\end{displaystyle}}
\renewcommand{\eqref}[1]{(\ref{#1})}
\def\longequals{\mathbin{=\kern-2pt=}}
\def\eqdef{\mathbin{\buildrel \rm def \over \longequals}}
\def\varep{\varepsilon}
\def\ddt{\frac{d}{dt}}
\newcommand{\remove}[1]{} 
\renewcommand{\remove}[1]{#1} 
\newtheorem{theorem}{Theorem}[section]
\newtheorem{lemma}[theorem]{Lemma}
\newtheorem{corollary}[theorem]{Corollary}
\newtheorem{proposition}[theorem]{Proposition}
\theoremstyle{remark}
\definecolor{darkred}{rgb}{.70,.12,.20}
\definecolor{darkgreen}{rgb}{.20,.52,.14}
\newcommand{\esssup}{\mathop{\mathrm{ess\,sup}}}
\newcommand{\supp}{\operatorname{supp}}
\title{Interior Estimates for Generalized Forchheimer Flows of Slightly Compressible Fluids}
\author{Luan T. Hoang$^{a,*}$ and Thinh T. Kieu$^b$}
\date{\today}
\begin{document}
\maketitle

\begin{center}
\textit{$^a$Department of Mathematics and Statistics, Texas Tech University, Box 41042, Lubbock, TX 79409--1042, U. S. A.} \\
\textit{$^b$Department of Mathematics, University of North Georgia, Gainesville Campus, 3820 Mundy Mill Rd., Oakwood, GA 30566, U. S. A.}\\
Email addresses: \texttt{luan.hoang@ttu.edu, thinh.kieu@ung.edu}\\
$^*$Corresponding author
\end{center}

\begin{abstract}
The generalized Forchheimer flows are studied for slightly compressible fluids in porous media with time-dependent Dirichlet boundary data for the pressure. No restrictions on the degree of the Forchheimer polynomial are imposed.
We derive, for all time, the interior $L^\infty$-estimates for the pressure and its partial derivatives, and the interior $L^2$-estimates for its Hessian.
The De Giorgi and Ladyzhenskaya-Uraltseva iteration techniques are used taking into account the special structures of the equations for both pressure and its gradient. These are combined with the uniform Gronwall-type bounds in establishing the asymptotic estimates when time tends to infinity.
\end{abstract}

\pagestyle{myheadings}\markboth{L. T. Hoang and T. T. Kieu}
{Interior Estimates for Generalized Forchheimer Flows}

\myclearpage
\section{Introduction}
\label{intro}

In this paper, we study the generalized Forchheimer flows for slightly compressible fluids in porous media.
Forchheimer equations are used to describe the fluids' dynamics when the ubiquitous Darcy's law is not applicable.
They are nonlinear relations between the fluid's velocity and gradient of pressure which were realized by Darcy \cite{Darcybook} and Dupuit \cite{Dupuit1857} in early works, formulated by Forchheimer \cite{Forchh1901,ForchheimerBook}, and were studied more extensively afterward in physics and engineering, see \cite{Muskatbook,Ward64,BearBook,Nieldbook,Straughanbook} and references therein. 
Mathematics of Darcy flows has been studied thoroughly with a vast literature for a long time dated back to the 1960s, see the treaty \cite{VazquezPorousBook} for references. In contrast, Forchheimer flows were investigated mathematically much later in the 1990s.
Even less are the mathematical works on  compressible Forchheimer flows. The reader is referred to \cite{ABHI1,HI1,HI2} for more information about this topic.

Generalized Forchheimer equations were proposed and studied in our previous works \cite{ABHI1,HI1,HI2,HIKS1,HKP1} in order to cover a large class of fluid flows formulated from experiments. For compressible fluids, they form a new class of degenerate parabolic equations with their own characteristics compared to other models of porous medium equations. Among a small number of papers on these flows, recent work \cite{HKP1} is focused on studying the pressure and its time derivative in space $L^\infty$, the pressure gradient in $L^s$ for  $s\in[1,\infty)$ and the pressure's Hessian in $L^{2-\delta}$ for $\delta\in (0,1)$. However, it requires the so-called Strict Degree Condition (SDC), that is, the degree of the Forchheimer polynomial is less than $4/(n-2)$, where $n$ is the spatial dimension. Another related paper \cite{HIKS1}, in contrast, does not require (SDC), but the analysis is mainly for pressure in space $L^s$, pressure gradient in space $L^{2-a}$ for a specific number $a\in(0,1)$,  and pressure's 
time derivative in $L^2$. 
Our current work is to unite the two approaches and develop them further without any restrictions on the Forchheimer polynomials.
Specifically, we consider the initial boundary value problem (IBVP) for the pressure in a bounded domain with time-dependent Dirichlet boundary data.
The interior $W^{1,\infty}$ and $W^{2,2}$ norms of the solutions are estimated, particularly for large time. 
Such estimates for degenerate parabolic equations usually require much work, see, for e.g.,  \cite{VazquezSmoothBook,RVV2009,RVV2010,LD2000}. 
Improving on \cite{HKP1}, we adapt and refine techniques by De Giorgi  \cite{DeGiorgi57}, Ladyzhenskaya-Uraltseva \cite{LadyParaBook68}, and also DiBenedetto \cite{DiDegenerateBook} for parabolic equations, and combine them with those used for Navier-Stokes equations \cite{FMRTbook}. These techniques are utilized successfully here thanks to the special structure of our equation.

The paper is organized as follows.
We recall the model, equations and basic facts in section \ref{prelim}. 
%
In section \ref{revision}, we  derive the uniform Gronwall-type estimates that sharpen  the previous results in \cite{HI1,HIKS1} especially for large time.
In section \ref{LinftySec}, the interior $L^\infty$-estimates  for pressure  are established by using $L^\alpha$-based De Giorgi iteration with sufficiently large $\alpha$. This is different from \cite{HKP1} which is based on the $L^2$-estimate and, hence, requires (SDC).
As a result, no restrictions on the Forchheimer polynomials are needed for our estimates.
In section \ref{GradSec}, we first use Ladyzhenskaya-Uraltseva-type imbedding and iteration to estimate the pressure gradient in $L^s$ for any $s\ge 1$. 
Our estimates only require the initial data to be in the spaces $W^{1,2-a}$ and $L^\alpha$ for a fixed $\alpha$. When $s$ is large, this requirement is much less than the $W^{1,s}$ condition in \cite{HKP1}. In subsection \ref{subgrad2}, we derive the $W^{1,\infty}$ estimates for the pressure, which were not previously studied in \cite{HIKS1,HKP1}. We make use of the equation \eqref{um} for the pressure gradient, which it is highly nonlinear but has a special structure. Thanks to this and previous $W^{1,s}$ ($s<\infty$) bounds, we utilize the De Giorgi technique to obtain the $L^{\infty}$-estimates for the pressure gradient.
Section \ref{Lpt-sec} contains the $L^\infty$-estimates for  the time derivative of the pressure, while 
section \ref{Hess-sec} contains new $L^2$-bounds for the pressure's Hessian.
It is noteworthy that, thanks to the uniform Gronwall-type inequalities in section \ref{revision}, the asymptotic bounds obtained as time goes to infinity depend only on the asymptotic behavior of the boundary data.
This paper is focused primarily on the interior estimates. The issue of the solutions' estimates on the entire domain is  addressed in another work of ours \cite{HK2}.

\myclearpage
\section{Background}
\label{prelim}

Consider a fluid  in a porous medium occupying a bounded domain $U$ with boundary $\Gamma=\partial U$ in space $\R^n$. For physics problem $n=3$, but here we consider any $n\ge 2$. Let $x\in \R^n$ and $t\in\R$  be the spatial and time variables.
The fluid flow has velocity $v(x,t)\in \R^n$, pressure $p(x,t)\in \R$ and density $\rho (x,t)\in [0,\infty)$.

The generalized Forchheimer equations studied in \cite{ABHI1,HI1,HI2,HIKS1,HKP1} are of the the form:
\beq\label{gForch} g(|v|)v=-\nabla p,\eeq
where $g(s)\ge 0$ is a function defined on $[0,\infty)$.
When $g(s)=\alpha, \alpha +\beta s,\alpha +\beta s+\gamma s^2,\alpha +\gamma_m s^{m-1}$, where $\alpha,\beta,\gamma,m,\gamma_m$ are empirical constants, we have Darcy's law, Forchheimer's two-term, three-term and power laws, respectively.

In this paper, we study the case when the function $g$ in \eqref{gForch} is a generalized polynomial with non-negative coefficients, that is,
\beq\label{gsa} g(s) =a_0s^{\alpha_0}+a_1 s^{\alpha_1}+\ldots +a_N s^{\alpha_N}\quad\text{for}\quad s\ge 0,\eeq
where  
$N\ge 1$, the powers $\alpha_0=0<\alpha_1<\ldots<\alpha_N$ are fixed real numbers (not necessarily integers), the coefficients $a_0,a_1,\ldots,a_N$ are non-negative with $a_0>0$ and $a_N>0$.
This function $g(s)$ is referred to as the Forchheimer polynomial in equation \eqref{gForch}, and $\alpha_N$ is the degree of $g$.

From \eqref{gForch} one can solve $v$ implicitly in terms of $\nabla p$ and
derives a nonlinear version of Darcy's equation:
\beq\label{u-forma} v= -K(|\nabla p|)\nabla p,\eeq
where the function $K:[0,\infty)\to[0,\infty)$  is defined by
\beq\label{Kdef} K(\xi)=\frac1{g(s(\xi))},
\text{ with } s=s(\xi)\ge 0 \text{ satisfying } sg(s)=\xi, \ \text{ for }\xi\ge 0. \eeq


In addition to \eqref{gForch}, we have the continuity equation 
\beq\label{conti-eq} 
\phi\frac{\partial \rho}{\partial t}+\nabla\cdot(\rho v)=0,\eeq
where number $\phi\in(0,1)$ is the constant porosity.
Also, for slightly compressible fluids, the equation of state is 
\beq\label{slight-compress} 
\frac{d\rho}{dp}=\frac{\rho}{\kappa},\quad\text{with } \kappa=const.>0.
\eeq

From \eqref{u-forma}, \eqref{conti-eq} and \eqref{slight-compress} one derives a scalar equation for the pressure:
\beq\label{dafo-nonlin} 
\phi\frac{\partial p}{\partial t}=\kappa\nabla \cdot (K(|\nabla p|)\nabla p) + K(|\nabla p|)|\nabla p|^2.\eeq

On the right-hand side of \eqref{dafo-nonlin}, the constant $\kappa$ is very large  for most slightly compressible fluids in porous media \cite{Muskatbook}, hence we neglect its second term  and by scaling the time variable, we study the reduced equation
\beq\label{lin-p} 
\frac{\partial p}{\partial t} = \nabla\cdot (K(|\nabla p|)\nabla p).\eeq
Note that this reduction is commonly used in engineering.

Our aim is to study the IBVP for equation \eqref{lin-p} in a bounded domain.
Here afterward $U$ is a bounded open connected subset of $\mathbb{R}^n$, $n=2,3,\ldots$ with $C^2$ boundary $\Gamma=\partial U$. 
Throughout, the Forchheimer polynomial $g(s)$ is fixed.
The following number is frequently used in our  calculations:
\beq\label{ab} a=\frac{\alpha_N}{1+\alpha_N}\in(0,1).
\eeq

The function $K(\xi)$ in \eqref{Kdef}  has the following properties (c.f. \cite{ABHI1,HI1}): it is decreasing in $\xi$ mapping $\xi\in[0,\infty)$ onto $(0,1/a_0]$, and
\beq\label{K-est-3}
\frac{C_1}{(1+\xi)^a}\le K(\xi)\le \frac{C_2}{(1+\xi)^a},
\eeq
\beq\label{Kestn}
C_3(\xi^{2-a}-1)\le K(\xi)\xi^2\le C_2\xi^{2-a},
\eeq
\beq\label{K-est-2}
-a K(\xi)\le  K'(\xi)\xi\le 0,
\eeq
where $C_1$, $C_2$, $C_3$ are positive constants depending on $g$.
As in previous works, we use the function $H(\xi)$ defined by 
\beqs H(\xi)=\int_0^{\xi^2}K(\sqrt s)ds\quad \hbox{for } \xi\ge 0.\eeqs
It satisfies $K(\xi)\xi^2\leq H(\xi)\le 2K(\xi)\xi^2$, thus, by \eqref{Kestn},
\beq\label{Hcompare} C_3(\xi^{2-a}-1)\leq H(\xi)\le 2C_2\xi^{2-a}.\eeq


The following parabolic Poincar\'e-Sobolev inequalities are needed for our study. 
For each $T>0$, denote $Q_T=U\times (0,T)$.
We define a threshold exponent 
\beq\label{alphastar} 
\alpha_*=\frac{an}{2-a}.\eeq

\begin{lemma} \label{Sob4}
Assume
\beq\label{alcond}
\alpha \ge 2 \quad \text{and}\quad  \alpha>\alpha_*.
\eeq
Let 
\beq\label{expndef}
p=\alpha\Big(1+\frac{2-a}n\Big)-a.
\eeq
Then
\beq\label{nonzerobdn}
\|u\|_{L^p(Q_T)}\le C(1+\delta T)^{1/p}[[u]],
\eeq
where $\delta=1$ in general, $\delta=0$ in case $u$ vanishes on the boundary $\partial U$, and
\beq\label{udouble}
[[u]]=\esssup_{[0,T]}\|u(t)\|_{L^\alpha(U)}+\Big(\int_0^T\int_U |u|^{\alpha-2}|\nabla u|^{2-a}dx dt\Big)^\frac 1{\alpha-a}.
\eeq

In case $U=B_R$ -- a ball of radius $R$ -- the inequality \eqref{nonzerobdn} holds with 
\beq\label{uR}
[[u]]= R^{\frac{n}{p} -\frac{n}{\alpha}}\esssup_{[0,T]}\|u(t)\|_{L^\alpha(B_R)}
+R^{\frac{n}{p} - \frac{n-(2-a)}{\alpha-a}}\Big(\int_0^T\int_{B_R} |u|^{\alpha-2}|\nabla u|^{2-a}dx dt\Big)^\frac 1{\alpha-a}
\eeq
and the constant $C$ independent of $R$.
\end{lemma}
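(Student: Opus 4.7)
The plan is to reduce this to a standard parabolic embedding by the classical substitution $v=|u|^\gamma$ with $\gamma=(\alpha-a)/(2-a)\ge 1$; the inequality holds because $\alpha\ge 2$. The chain rule yields $|\nabla v|^{2-a}=\gamma^{2-a}|u|^{\alpha-2}|\nabla u|^{2-a}$, and with $r=\alpha/\gamma=\alpha(2-a)/(\alpha-a)$, the quantity $[[u]]$ is equivalent, up to multiplicative constants and powers, to
$$\sup_{t\in[0,T]}\|v(t)\|_{L^r(U)}+\Bigl(\int_0^T\|\nabla v(t)\|_{L^{2-a}(U)}^{2-a}\,dt\Bigr)^{1/(\alpha-a)},$$
which is exactly the $L^\infty_tL^r_x\cap L^{2-a}_tW^{1,2-a}_x$ information used in the standard parabolic Sobolev embedding.

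Pointwise in $t$, the spatial Sobolev embedding $W^{1,2-a}(U)\hookrightarrow L^{(2-a)^*}(U)$ with $(2-a)^*=n(2-a)/(n-(2-a))$ gives $\|v\|_{L^{(2-a)^*}}\le C(\|\nabla v\|_{L^{2-a}}+\|v\|_{L^{2-a}})$, where the lower-order term can be dropped when $u$ vanishes on $\partial U$ (so $\delta=0$). A short algebraic check confirms that the hypothesis $\alpha>\alpha_*=an/(2-a)$ is exactly $r<(2-a)^*$, permitting a Hölder interpolation at each $t$:
$$\|v\|_{L^P}\le\|v\|_{L^r}^{\beta}\|v\|_{L^{(2-a)^*}}^{1-\beta},\qquad\frac{1}{P}=\frac{\beta}{r}+\frac{1-\beta}{(2-a)^*}.$$
The specific choice $(1-\beta)P=2-a$ pins down $P=(2-a)(1+r/n)$ and, after undoing the $\gamma$-substitution, produces precisely the target exponent $\gamma P=\alpha(1+(2-a)/n)-a=p$.

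Raising the interpolation to the $P$-th power and integrating in $t$ gives
$$\int_0^T\|v\|_{L^P}^P\,dt\le\Bigl(\sup_{[0,T]}\|v\|_{L^r}\Bigr)^{P-(2-a)}\int_0^T\|v\|_{L^{(2-a)^*}}^{2-a}\,dt.$$
Substituting Sobolev, using $\|v\|_{L^{2-a}}\le C\|v\|_{L^r}$ on the bounded $U$ (which is the source of the $\delta T$ term), and translating back to $u$ with $M=\sup_t\|u\|_{L^\alpha}$ and $J=\int_0^T\int_U|u|^{\alpha-2}|\nabla u|^{2-a}$, one obtains $\|u\|_{L^p(Q_T)}^p\le CM^{p-(\alpha-a)}J+C\delta TM^p$. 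The elementary inequality $M^{p-(\alpha-a)}J\le(M+J^{1/(\alpha-a)})^p=[[u]]^p$ followed by a $p$-th root yields \eqref{nonzerobdn}.

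For $U=B_R$, the rescaling $\tilde u(y,t)=u(Ry,t)$ on $B_1$ transforms each spatial norm on $B_R$ by a factor $R^{n/q}$ (with an extra $R^{-1}$ on gradients), so applying the unit-ball estimate produces exactly the weights $R^{n/p-n/\alpha}$ and $R^{n/p-(n-(2-a))/(\alpha-a)}$ of \eqref{uR} with constant independent of $R$. The sole genuine obstacle is the algebraic bookkeeping: verifying that $\alpha_*$ is precisely the threshold making $r<(2-a)^*$ and that the Hölder choice $(1-\beta)P=2-a$ delivers the exponent $p$ in \eqref{expndef}.
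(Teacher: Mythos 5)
Your argument is correct and is essentially the same as the paper's: both rely on the substitution $\phi=|u|^{(\alpha-a)/(2-a)}$ (you call it $v=|u|^\gamma$), the spatial Poincar\'e--Sobolev inequality for $W^{1,2-a}$, a Lebesgue interpolation between $L^\alpha$ and $L^{mr^*}$, and the choice of interpolation parameter that makes the time-integrated gradient term appear with exponent $2-a$, followed by the scaling $y=(x-x_0)/R$ for the ball case. The only cosmetic difference is that the paper carries out the interpolation directly on $u$ with exponent $\theta$ satisfying $(1-\theta)p=\alpha-a$, while you interpolate on $v$ with $(1-\beta)P=2-a$; these are the same choice after the change of variables, and your algebraic checks (that $\alpha>\alpha_*$ is equivalent to $r<(2-a)^*$ and that $\gamma P=p$) are accurate.
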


The proof of Lemma \ref{Sob4} is given in Appendix \ref{ElePfs}. 
The next is a particular embedding with spatial weights from Lemma 2.4 of \cite{HKP1} (see inequality (2.28) with $m=2$ there).

\begin{lemma}[cf. \cite{HKP1}, Lemma 2.4]\label{WSobolev}
Given $W(x,t)>0$ on $Q_T$.   
Let $r$ be a number that satisfies 
\beq\label{rcond} \frac{2n}{n+2}< r<  2.\eeq 
Set
\beq\label{sstar}
\varrho =\varrho(r)\eqdef 4(1-1/r^*).
\eeq
Then 
\beq\label{Wei1}
\|u\|_{L^\varrho (Q_T)}
\le C  [[u]]_{2,W;T} \Big\{\delta T^{\frac 1\varrho}+  \esssup_{t\in[0,T]} \Big (\int_U  W(x,t)^{-\frac r{2-r}} \chi_{{\rm supp}\,u}(x,t)dx\Big )^\frac{2-r}{\varrho r}\Big \},
\eeq
where $\delta=1$ in general, $\delta=0$ in case $u$ vanishes on the boundary $\partial U$,  and 
\beq\label{normquad}
[[u]]_{2,W;T}=\esssup_{[0,T]} \|u(t)\|_{L^2(U)}+\Big(\int_0^T \int_U W(x,t)|\nabla u|^2 dx dt\Big)^\frac 1 2.
\eeq
\end{lemma}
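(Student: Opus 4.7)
The plan is to combine three standard ingredients at each fixed time slice and then integrate in $t$: (i) the Sobolev embedding $W^{1,r}(U) \hookrightarrow L^{r^*}(U)$, where $r^*=nr/(n-r)$; (ii) Hölder's inequality converting the unweighted $\|\nabla u(t)\|_{L^r}$ into the weighted quantity $\|W^{1/2}\nabla u(t)\|_{L^2}$, at the cost of the localized weight integral $\int_U W^{-r/(2-r)} \chi_{\mathrm{supp}\, u}\,dx$; and (iii) interpolation of $L^\varrho$ between $L^2$ and $L^{r^*}$, with the exponents arranged precisely so that the $L^{r^*}$ factor appears to power exactly $2$ (matching the power on the right-hand side of the Hölder step after raising to $2/r$). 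The hypothesis $r<2$ is needed to legalize the Hölder pairing in (ii), while $r>2n/(n+2)$ is exactly the condition $r^*>2$ that makes the interpolation in (iii) nontrivial.

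\textbf{Execution.} For fixed $t$, Hölder with exponents $2/r$ and $2/(2-r)$ applied to $|\nabla u|^r=(W|\nabla u|^2)^{r/2}\cdot W^{-r/2}\chi_{\mathrm{supp}\, u}$ yields
\begin{equation*}
\|\nabla u(t)\|_{L^r(U)}^2 \le \Bigl(\int_U W(x,t)|\nabla u|^2\,dx\Bigr)\Bigl(\int_U W(x,t)^{-r/(2-r)}\chi_{\mathrm{supp}\,u}\,dx\Bigr)^{(2-r)/r}.
\end{equation*}
Sobolev embedding gives $\|u(t)\|_{L^{r^*}(U)}\le C\bigl(\|\nabla u(t)\|_{L^r(U)}+\delta\|u(t)\|_{L^r(U)}\bigr)$, with $\delta=0$ in the vanishing-trace case; otherwise the $\|u\|_{L^r}$ correction is handled by $\|u\|_{L^r}\le |U|^{1/r-1/2}\|u\|_{L^2}$. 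With $\varrho=4(1-1/r^*)$, the interpolation parameter $\theta$ defined by $1/\varrho=\theta/2+(1-\theta)/r^*$ satisfies the clean identity $(1-\theta)\varrho=2$ (and $\theta\varrho=\varrho-2$), by a short algebraic check. Consequently
\begin{equation*}
\|u(t)\|_{L^\varrho(U)}^\varrho \le \|u(t)\|_{L^2(U)}^{\varrho-2}\,\|u(t)\|_{L^{r^*}(U)}^{2}.
\end{equation*}
Integrating in $t$, factoring out $\esssup_t\|u\|_{L^2}^{\varrho-2}$, substituting the Sobolev bound, then pulling the essential supremum of the weight integral out of the time integral, yields
\begin{equation*}
\|u\|_{L^\varrho(Q_T)}^\varrho \le C\,[[u]]_{2,W;T}^\varrho\,\Bigl\{\delta T+ \esssup_{t\in[0,T]}\Bigl(\int_U W^{-r/(2-r)}\chi_{\mathrm{supp}\,u}\,dx\Bigr)^{(2-r)/r}\Bigr\}.
\end{equation*}
Taking the $\varrho$-th root and using subadditivity $(a+b)^{1/\varrho}\le a^{1/\varrho}+b^{1/\varrho}$ produces \eqref{Wei1}.

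\textbf{Main obstacle.} The delicate point is the exponent bookkeeping: one must verify that the prescribed $\varrho=4(1-1/r^*)$ is precisely the exponent forced by the compatibility requirement $(1-\theta)\varrho=2$, so that the two factors of $[[u]]_{2,W;T}$ arising from $\esssup_t\|u\|_{L^2}$ (raised to $\varrho-2$) and from $\int_{Q_T}W|\nabla u|^2$ (raised to $1$) combine to total degree exactly $\varrho$. The second subtlety is the treatment of the nonzero boundary case, where the Poincaré-type correction $\|u\|_{L^r}\le |U|^{1/r-1/2}\|u\|_{L^2}$ must be threaded through the same interpolation and time integration without introducing spurious powers of $T$, so that the correction manifests exactly as the additive $\delta T^{1/\varrho}$ term in \eqref{Wei1}.
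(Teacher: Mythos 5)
Your proof is correct, and it follows what is essentially the only natural route: slice-by-slice Sobolev embedding $W^{1,r}\hookrightarrow L^{r^*}$, Hölder with exponents $2/r$ and $2/(2-r)$ to convert $\|\nabla u\|_{L^r}$ into the weighted $L^2$ gradient at the price of $\int W^{-r/(2-r)}\chi_{\mathrm{supp}\,u}$, interpolation of $L^\varrho$ between $L^2$ and $L^{r^*}$, and integration in time. The paper does not prove this lemma but quotes it from \cite{HKP1} (Lemma 2.4), so there is no in-text argument to compare against; your reconstruction is the expected one. All the exponent bookkeeping checks out: the identity $(1-\theta)\varrho=2$ is exactly equivalent to $\varrho=4(1-1/r^*)$, the hypothesis $r>2n/(n+2)$ is equivalent to $r^*>2$, and $(r^*-2)^2>0$ guarantees $2<\varrho<r^*$ so the interpolation is legitimate. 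The handling of the $\delta$ term via $\|u\|_{L^r}\le |U|^{1/r-1/2}\|u\|_{L^2}$ and the final factorization into two copies of $[[u]]_{2,W;T}^{\varrho}$ are both correct.
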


The following is a generalization of the convergence of fast decay geometry sequences in Lemma 5.6, Chapter II of \cite{LadyParaBook68}. It will be used in the De Giorgi iterations.

\begin{lemma}[cf. \cite{HKP1}, Lemma A.2] \label{multiseq} 
Let $\{Y_i\}_{i=0}^\infty$ be a sequence of non-negative numbers satisfying
\beqs 
Y_{i+1}\le \sum_{k=1}^m A_k B_k^i  Y_i^{1+\mu_k}, \quad 
i =0,1,2,\ldots,
\eeqs
where  $A_k>0$, $B_k>1$ and $\mu_k>0$ for $k=1,2,\ldots,m$.
Let $B=\max\{B_k : 1\le k\le m\}$ and $\mu=\min\{\mu_k : 1\le k\le m\}$. 
If $Y_0\le \min\{ (m^{-1} A_k^{-1} B^{-\frac 1 {\mu}})^{1/\mu_k} : 1\le k\le  m\}$
then $\lim_{i\to\infty} Y_i=0$.
\end{lemma}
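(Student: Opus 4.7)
My plan is to reduce to the classical single-term case by guessing the right geometric Ansatz: show by induction on $i$ that
\[
Y_i \le Y_0\, r^i \quad\text{for}\quad r \eqdef B^{-1/\mu} \in (0,1).
\]
Once this is established, $\lim_{i\to\infty} Y_i = 0$ follows immediately from $r<1$. The role of the exponent $1/\mu$ (rather than $1/\mu_k$) is exactly to make the same decay rate work uniformly for all the summands in the recurrence.

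I would first unfold the hypothesis on $Y_0$. Raising both sides of $Y_0 \le (m^{-1} A_k^{-1} B^{-1/\mu})^{1/\mu_k}$ to the $\mu_k$-th power and rearranging gives $A_k Y_0^{\mu_k} \le m^{-1} B^{-1/\mu}$ for every $k$, and summing over $k=1,\dots,m$ yields the key inequality
\[
\sum_{k=1}^{m} A_k Y_0^{\mu_k} \;\le\; B^{-1/\mu} \;=\; r.
\]
The base case $Y_0 \le Y_0$ is trivial. For the inductive step, assume $Y_i \le Y_0 r^i$ and substitute into the recursion together with $B_k \le B$:
\[
Y_{i+1} \le \sum_{k=1}^m A_k B^{i} (Y_0 r^{i})^{1+\mu_k}
= Y_0 \sum_{k=1}^m A_k Y_0^{\mu_k}\, B^{i}\, r^{i(1+\mu_k)}.
\]

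The crucial algebraic step is then to observe that
\[
B^{i}\, r^{i(1+\mu_k)} \;=\; B^{\,i - i(1+\mu_k)/\mu} \;=\; r^{i}\cdot B^{-i(\mu_k-\mu)/\mu} \;\le\; r^{i},
\]
where the inequality uses $\mu_k \ge \mu$ and $B>1$. Substituting and using the bound on $\sum_k A_k Y_0^{\mu_k}$ from the previous step gives
\[
Y_{i+1} \;\le\; Y_0\, r^{i} \sum_{k=1}^{m} A_k Y_0^{\mu_k} \;\le\; Y_0\, r^{i}\cdot r \;=\; Y_0\, r^{i+1},
\]
completing the induction.

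There is no real analytic obstacle here; the only subtlety is identifying the correct scaling, namely that $r=B^{-1/\mu}$ and the factor $m^{-1}$ in the hypothesis are precisely the two ingredients needed to close the induction after bounding $B_k \le B$ and $\mu_k \ge \mu$ with losses that cancel. (In the special case $m=1$ this recovers the classical Ladyzhenskaya--Uraltseva iteration lemma in the form $Y_i \le Y_0 B^{-i/\mu}$.)
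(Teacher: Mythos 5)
Your proof is correct. Since the paper only cites \cite{HKP1}, Lemma A.2, without reproducing the argument, there is no in-paper proof to compare against; but your induction showing $Y_i \le Y_0\, B^{-i/\mu}$ is the natural multi-term extension of the classical Ladyzhenskaya--Solonnikov--Uraltseva fast-geometric-convergence lemma, and it is the argument one would expect in \cite{HKP1}. All steps check out: the hypothesis yields $\sum_{k} A_k Y_0^{\mu_k} \le B^{-1/\mu}$; monotonicity of $t\mapsto t^{1+\mu_k}$ on $[0,\infty)$ justifies substituting the inductive bound; and $B^i r^{i(1+\mu_k)} = r^i B^{-i(\mu_k-\mu)/\mu} \le r^i$ because $\mu_k \ge \mu$ and $B>1$ --- so the losses from replacing $B_k$ by $B$ and $\mu_k$ by $\mu$ cancel exactly against the factor $m^{-1}$ and the choice $r=B^{-1/\mu}$, closing the induction.
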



\myclearpage
\section{Uniform Gronwall-type estimates}
\label{revision}
We study the following IBVP for $p(x,t)$:
\beq\label{p:eq}
\begin{cases}
\begin{displaystyle}
\frac{\partial p}{\partial t} 
\end{displaystyle}
 = \nabla \cdot (K (|\nabla p|)\nabla p  )& \text {in }  U\times (0,\infty),\\
p(x,0)=p_0(x) &\text {in } U,\\
p(x,t)=\psi(x,t)& \text{on } \Gamma \times(0,\infty).
\end{cases}
\eeq

In order to deal with the non-homogeneous boundary condition, the data $\psi(x,t)$ with $x\in\Gamma$ and $t>0$ is extended to a function $\Psi(x,t)$ with $x\in \bar U$ and $t\ge 0$. Throughout, our results are stated in terms of $\Psi$ instead of $\psi$. Nonetheless, corresponding results in terms of $\psi$ can be retrieved as performed in \cite{HI1}. The function $\Psi$ is always assumed to have adequate regularities for all calculations in this paper.

It is proved in Section 3 of \cite{HIKS1} that \eqref{p:eq} possesses a weak solution $p(x,t)$ for all $t>0$. 
It, in fact, has more regularity in spatial and time variables, see \cite{DiDegenerateBook}.
For the current study, we assume that solution $p(x,t)$ has sufficient regularities both in $x$ and $t$ variables such that our calculations hereafterward can be performed legitimately. 

In this section, we obtain improved estimates for solutions of \eqref{p:eq}. We emphasize the asymptotic estimates as $t\to\infty$ in terms of the asymptotic behavior of $\Psi(x,t)$.

\textbf{Notation.}
(a) Hereafter, symbol $C$ is used to denote a positive number independent of the initial and boundary data, and the time variables $t$, $T_0$, $T$;
it may depend on many parameters, namely, exponents and coefficients of polynomial  $g$,  the spatial dimension $n$ and domain $U$, other involved exponents $\alpha$, $s$, etc. in calculations. The value of $C$ may vary from place to place, even on the same line.
(b) For partial derivative notation, we will  alternatively use $\partial p/\partial t =\partial_t p=p_t$, and $\partial p/\partial x_m=\partial_m p=p_{x_m}$. 
(c) The Lebesgue norm $\|\cdot\|_{L^s}$ means $\|\cdot\|_{L^s(U)}$.
(d) For a function $f(x,t)$, we denote by $f(t)$ the function $x\to f(x,t)$. 

For $\alpha\ge 1$, we define
\beq
A(\alpha,t)=\Big[\int_U|\nabla \Psi(x,t)|^\frac{\alpha(2-a)}{2} dx\Big]^\frac {2(\alpha-a)}{\alpha(2-a)} +\Big[\int_U  |\Psi_t(x,t)|^\alpha dx\Big]^\frac{\alpha-a}{\alpha(1-a)}
\eeq
for $t\ge 0$, and
\beq A(\alpha) = \limsup_{t\to\infty} A(\alpha,t)\quad \text {and}\quad \beta(\alpha)= \limsup_{t\to\infty}[A'(\alpha,t)]^-.
\eeq
Also, define for $\alpha>0$ the number
\beq
\widehat \alpha
=\max\big\{\alpha,2,\alpha_*\big\}.
\eeq

Whenever $\beta(\alpha)$ is in use, it is understood that the function $t\to A(\alpha,t)$ belongs to $C^1((0,\infty))$.

For a function $f:[0,\infty)\to\R$, we denote by $Env f$ a continuous and increasing function $F:[0,\infty)\to\R$ such that $F(t)\ge f(t)$ for all $t\ge 0$.

Let $p(x,t)$ be a solution to IBVP \eqref{p:eq}. Denote $\overline{p} = p-\Psi$ and $\bar p_0=p_0-\Psi(\cdot,0)$.
We recall relevant results from \cite{HIKS1} below.

\begin{theorem}[cf. \cite{HIKS1}, Theorem 4.3]\label{HIKS4.3}
Let $\alpha>0$.

{\rm (i)} For all $t\ge 0$,
 \beq\label{pbar:ineq1}
 \int_U|\bar{p}(x,t)|^\alpha dx \le C\Big(1+ \int_U|\bar p_0(x)|^{\widehat \alpha} dx
+ [ Env A(\widehat\alpha,t)]^\frac{\widehat\alpha}{\widehat\alpha-a}\Big ).
 \eeq

{\rm (ii)} If $A(\widehat \alpha)<\infty$ then
\beq\label{limsupPbar}
  \limsup_{t\rightarrow\infty} \int_U |\bar{p}(x,t)|^\alpha dx \le C\big (1+A(\widehat{\alpha})^\frac {\widehat{\alpha}} {\widehat{\alpha}-a} \big).
\eeq

{\rm (iii)} If $\beta(\widehat \alpha)<\infty$ then there is $T>0$ such that
 \beq\label{pbar:ineq2}
 \int_U|\bar{p}(x,t)|^\alpha dx \le C\big (1+\beta(\widehat{\alpha})^\frac {\widehat{\alpha}}{\widehat{\alpha}-2a} + A(\widehat{\alpha},t)^\frac{\widehat{\alpha}}{\widehat{\alpha}-a} \big )\quad \text{for all }t\ge T.
 \eeq
 
\end{theorem}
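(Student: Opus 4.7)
The strategy is to derive a nonlinear ODE $y' + c\, y^\gamma \le C(1 + A(\alpha,t))$ with $y(t) = \int_U|\bar p(t)|^\alpha\,dx$ and $\gamma = (\alpha-a)/\alpha \in (0,1)$, then extract (i)--(iii) by progressively sharper ODE comparisons. Since $\widehat\alpha \ge \alpha$, Hölder's inequality $\int_U|\bar p|^\alpha \le |U|^{1-\alpha/\widehat\alpha}(\int_U|\bar p|^{\widehat\alpha})^{\alpha/\widehat\alpha}$ reduces everything to the case $\alpha = \widehat\alpha$, in which $\alpha\ge 2$ and $\alpha > \alpha_*$.

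The perturbation $\bar p = p - \Psi$ solves $\partial_t\bar p = \nabla\cdot(K(|\nabla p|)\nabla p) - \Psi_t$ with $\bar p = 0$ on $\Gamma$. Testing with $|\bar p|^{\alpha-2}\bar p$ and integrating by parts yields
\[
\tfrac{1}{\alpha}\tfrac{d}{dt}\int_U|\bar p|^\alpha + (\alpha-1)\int_U K(|\nabla p|)\nabla p \cdot \nabla \bar p\,|\bar p|^{\alpha-2}\,dx = -\int_U \Psi_t\,|\bar p|^{\alpha-2}\bar p\,dx.
\]
Writing $\nabla p = \nabla\bar p + \nabla\Psi$ and using \eqref{Kestn}, \eqref{K-est-2}, together with Young's inequality $K(|\nabla p|)|\nabla p||\nabla\Psi| \lesssim \varepsilon|\nabla p|^{2-a} + C_\varepsilon|\nabla\Psi|^{2-a}$ redistributed over the weight $|\bar p|^{\alpha-2}$, I lower-bound the dissipation by $c\int |\bar p|^{\alpha-2}|\nabla\bar p|^{2-a}\,dx$ modulo $\int|\nabla\Psi|^{\alpha(2-a)/2}$ and $\int|\Psi_t|^\alpha$ terms. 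The Young exponents in the $\Psi_t$ integral are tuned so that the resulting $\|\Psi_t\|_{L^\alpha}$-bound carries the power $(\alpha-a)/(\alpha(1-a))$ built into $A(\alpha,t)$.

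To turn this into an ODE for $y$, I apply the Sobolev inequality to $w = |\bar p|^{(\alpha-a)/(2-a)}$ (which vanishes on $\Gamma$): $\|w\|_{L^{(2-a)^*}}^{2-a}\le C\int|\nabla w|^{2-a}\,dx$, with $(2-a)^* = n(2-a)/(n-(2-a))$. Since $\int|\nabla w|^{2-a} \asymp \int|\bar p|^{\alpha-2}|\nabla\bar p|^{2-a}$, this reads $\|\bar p\|_{L^r}^{\alpha-a}\le C\int|\bar p|^{\alpha-2}|\nabla\bar p|^{2-a}\,dx$ for $r = (\alpha-a)n/(n-(2-a))$. The assumption $\alpha > \alpha_*$ is exactly what gives $r > \alpha$, so Hölder on the bounded domain $U$ yields $y^\gamma \le C\int|\bar p|^{\alpha-2}|\nabla\bar p|^{2-a}\,dx$, and substitution into the energy inequality gives $y' + c\,y^\gamma \le C(1 + A(\alpha,t))$.

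For (i), the dichotomy $y'(t) < 0$ whenever $y(t)^\gamma > C(1 + \operatorname{Env} A(\alpha,t))$, combined with the nondecreasing envelope, yields $y(t) \le C(y(0) + [\operatorname{Env} A(\alpha,t)]^{1/\gamma})$, which is \eqref{pbar:ineq1} since $1/\gamma = \widehat\alpha/(\widehat\alpha-a)$. Part (ii) follows by applying the same dichotomy to $\limsup_{t\to\infty}A(\alpha,t)= A(\widehat\alpha)$. Part (iii) is the delicate step: set the quasi-equilibrium $Y(t) = CA(\alpha,t)^{1/\gamma}$; then the positive remainder $\xi = (y - Y)_+$ satisfies $\xi' + c\xi^\gamma \lesssim -Y'(t) \lesssim A(\alpha,t)^{1/\gamma - 1}[A'(\alpha,t)]^-$ for large $t$. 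ODE comparison gives $\xi \lesssim A^{(1-\gamma)/\gamma^2}([A']^-)^{1/\gamma}$ asymptotically, and Young's inequality with exponents $\gamma/(1-\gamma)$ and $\gamma/(2\gamma-1)$ (valid since $\gamma > 1/2$, i.e., $\alpha > 2a$, which holds in our regime) produces $\xi \lesssim \varepsilon A^{1/\gamma} + C([A']^-)^{1/(2\gamma-1)}$. The identity $1/(2\gamma-1) = \alpha/(\alpha-2a)$ then delivers the exponent $\widehat\alpha/(\widehat\alpha-2a)$ on $\beta(\widehat\alpha)$ in \eqref{pbar:ineq2}. The main obstacle throughout is the careful balancing of Young--Hölder exponents, both in the energy step to match the precise form of $A(\alpha,t)$ and in the quasi-equilibrium iteration for (iii) to produce exactly $\widehat\alpha/(\widehat\alpha-2a)$ on $\beta(\widehat\alpha)$.
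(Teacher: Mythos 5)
This theorem is stated with the citation ``cf.\ \cite{HIKS1}, Theorem 4.3'' and is not proved in the present paper, so there is no in-paper proof to compare against; what you have reconstructed is in fact the method of the cited reference. Your outline is correct in all essential respects: the H\"older reduction to $\alpha=\widehat\alpha$; the energy estimate obtained by testing with $|\bar p|^{\alpha-2}\bar p$, splitting $\nabla p=\nabla\bar p+\nabla\Psi$, and using \eqref{Kestn}, \eqref{K-est-2} together with Young to isolate a dissipation $\int |\bar p|^{\alpha-2}|\nabla\bar p|^{2-a}$; the Sobolev step on $w=|\bar p|^{(\alpha-a)/(2-a)}$ (the exponent $(\alpha-a)/(2-a)\ge 1$ precisely because $\widehat\alpha\ge 2$, and $r=(\alpha-a)n/(n-2+a)>\alpha$ precisely because $\widehat\alpha>\alpha_*$) yielding $y'+c\,y^\gamma\le C(1+A(\alpha,t))$ with $\gamma=(\alpha-a)/\alpha$; and the three ODE-comparison conclusions. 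The exponent bookkeeping checks out: H\"older of $|\nabla\Psi|^{2-a}$ against $|\bar p|^{\alpha-2}$ gives the $L^{\alpha(2-a)/2}$ norm, and the subsequent Young with conjugate pair $\bigl(\tfrac{\alpha-a}{2-a},\tfrac{\alpha-a}{\alpha-2}\bigr)$ produces exactly the power $\tfrac{2(\alpha-a)}{\alpha(2-a)}$ in $A$; similarly, Young on $\|\Psi_t\|_{L^\alpha}\,y^{(\alpha-1)/\alpha}$ with pair $\bigl(\tfrac{\alpha-a}{1-a},\tfrac{\alpha-a}{\alpha-1}\bigr)$ produces the power $\tfrac{\alpha-a}{\alpha(1-a)}$. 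In part (iii) the identity $2\gamma-1=(\alpha-2a)/\alpha$, hence $1/(2\gamma-1)=\alpha/(\alpha-2a)$, is correct, and the needed $\gamma>1/2$ follows from $\widehat\alpha\ge 2>2a$.

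The one place you should tighten is the phrase ``ODE comparison gives $\xi\lesssim A^{(1-\gamma)/\gamma^2}([A']^-)^{1/\gamma}$ asymptotically.'' The right-hand side of $\xi'+c\xi^\gamma\lesssim [Y']^-$ is time-dependent, so this is not a direct comparison with a fixed equilibrium. One should first pass to large $t$ where $[A'(\alpha,t)]^-\le\beta(\widehat\alpha)+\varepsilon$ and $A(\alpha,t)\le A(\widehat\alpha)+\varepsilon$, giving a constant upper bound $M$ on the right-hand side, and then invoke the elementary fact that any nonnegative solution of $\xi'+c\xi^\gamma\le M$ with $\gamma\in(0,1)$ satisfies $\limsup_{t\to\infty}\xi(t)\le (M/c)^{1/\gamma}$ and, more to the point for \eqref{pbar:ineq2}, $\xi(t)\le (M/c)^{1/\gamma}+\varepsilon$ for all $t$ beyond some $T$ (since $\xi$ is strictly decreasing whenever it exceeds $(M/c)^{1/\gamma}$). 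With that uniform-Gronwall step made explicit, your argument is a complete proof.
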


For gradient and time derivative estimates, we denote
\begin{align*}
G_1(t)&=\int_U |\nabla \Psi(x,t)|^2 dx +\Big[\int_U |\Psi_t(x,t)|^{r_0} dx   \Big]^\frac{2-a}{r_0(1-a)} 
 +\Big[\int_U |\Psi_t(x,t)|^{r_0}dx  \Big]^\frac{1}{r_0},\\
G_2(t)&=\int_U|\nabla\Psi_t(x,t)|^2 dx+\int_U|\Psi_t(x,t)|^2dx,\\
G_3(t)&=G_1(t)+G_2(t),\quad
G_4(t)=G_3(t)+\int_U |\Psi_{tt}|^2 dx.
\end{align*}
with $r_0=\frac{n(2-a)}{(2-a)(n+1)-n}$.
For $t\ge 0$, recall from (4.20) in \cite{HIKS1} and from (3.25) in \cite{HI1} that
\begin{align}
\label{t0} 
\int_0^t \int_U H(|\nabla p(x,\tau)|)dx d\tau&\le C \int_U \bar p^2_0(x) dx + C\int_0^tG_1(\tau)d\tau,\\
\label{intpt}
\int_U H(|\nabla p(x,t)|)dx+\int_0^t \int_U|\bar p_t(x,\tau)|^2 dx d\tau &\le \int_U \big[H(|\nabla p_0(x)|)+\bar p^2_0(x)\big] dx +C\int_0^t G_3(\tau)d\tau. 
\end{align}

Let $\alpha\ge \widehat 2$. For $t>0$, applying Theorem 4.5 in \cite{HIKS1} with $t_0=t$ and following Remark 4.8 there to replace $\widehat 2$ by $\alpha$, we have
\begin{multline}\label{newHpt1}
\int_U |\bar p_t(x,t)|^2 dx
\le C\Big\{1+\int_U |\bar p_0(x)|^\alpha dx+ t^{-1} \Big( \int_U |\bar p_0(x)|^{2}+H(|\nabla p_0(x)|) dx + \int_0^{t}G_3(\tau)d\tau \Big)\Big\} \\
+C\int_{0}^t e^{-d_0(t-\tau)}\big( [Env A(\alpha,\tau)]^\frac{\alpha}{\alpha-a}+G_4(\tau)\big)d\tau,
\end{multline}
where $d_0>0$ is independent of $\alpha$. Since $Env A(\alpha,t)$ is increasing in $t$, and $G_3\le G_4$ we simplify \eqref{newHpt1} as
\begin{multline}\label{newHpt2}
 \int_U |\bar p_t(x,t)|^2 dx
\le C(1+t^{-1})\Big(1+\int_U |\bar p_0(x)|^\alpha dx+ \int_U H(|\nabla p_0(x)|) dx\\
+ [Env A(\alpha,t)]^\frac {\alpha}{\alpha-a}+ \int_{0}^t G_4(\tau)  d\tau\Big).
\end{multline}

Below are improved estimates for time $t$ large by using uniform Gronwall-type inequalities.

\begin{lemma}\label{UG} 
For $t\ge 1$,
\beq\label{t1}
\int_{t-1}^t \int_U H(|\nabla p(x,\tau)|)dx d\tau\le C \int_U \bar{p}^2(x,t-1) dx + C\int_{t-1}^tG_1(\tau)d\tau,
\eeq
\beq \label{t6}
\int_U H(|\nabla p(x,t)|)dx+\frac{1}{2} \int_{t-1/2}^t \int_U \bar{p}_t^2(x,\tau) dx d\tau
\le   C\int_U \bar{p}^2(x,t-1) dx + C\int_{t-1}^tG_3(\tau)d\tau,
\eeq
\beq\label{ptUni}
\int_U\bar{p}_t^2(x,t) dx\le C\int_U \bar{p}^2(x,t-1)dx+C\int_{t-1}^t G_4(\tau)d\tau. 
\eeq
\end{lemma}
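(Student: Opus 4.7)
The plan is to convert each of the known ``integrate-from-zero'' estimates of Section \ref{revision} into a ``uniform-in-$t$'' estimate by running the underlying energy argument on a unit-length time window ending at $t$, and, where a pointwise-in-$\tau$ bound is required, averaging a differential inequality over a subinterval via the Foias--Temam uniform Gronwall lemma \cite{FMRTbook}. This is the same mechanism that strips the dependence on initial data for large $t$ in dissipative evolution equations.

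For \eqref{t1}, the estimate \eqref{t0} was obtained by multiplying \eqref{p:eq} by $\bar p$, integrating over $U$, and invoking \eqref{Hcompare} to convert $K(|\nabla p|)|\nabla p|^2$ into $H(|\nabla p|)$ up to constants. Repeating exactly that computation with $[0,t]$ replaced by $[t-1,t]$ yields
\[
\int_U \bar p^2(x,t)\,dx + c\int_{t-1}^t\!\int_U H(|\nabla p|)\,dx\,d\tau \le C\int_U \bar p^2(x,t-1)\,dx + C\int_{t-1}^t G_1(\tau)\,d\tau,
\]
and \eqref{t1} follows by dropping the first term on the left.

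For \eqref{t6}, I rely on the pointwise-in-$\tau$ differential inequality underlying \eqref{intpt}: testing \eqref{p:eq} against $\bar p_t$ and manipulating the flux term as in \cite{HIKS1} gives
\[
\frac{d}{d\tau}\int_U H(|\nabla p(\tau)|)\,dx + \int_U \bar p_t^2(x,\tau)\,dx \le C G_3(\tau).
\]
Integrating from $s$ to $t$ and then averaging $s$ over $[t-1,t-1/2]$ controls $\tfrac{1}{2}\int_U H(|\nabla p(t)|) + \tfrac{1}{2}\int_{t-1/2}^t \int_U \bar p_t^2$ by twice the mean of $\int_U H(|\nabla p(s)|)$ over $[t-1,t-1/2]$ plus $C\int_{t-1}^t G_3$; the mean is dominated by the full integral over $[t-1,t]$, which is controlled by \eqref{t1} and delivers \eqref{t6}.

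For \eqref{ptUni}, I differentiate \eqref{p:eq} in $t$ and test the result against $\bar p_t$. Using the monotonicity encoded in \eqref{K-est-2} to split $\partial_\tau(K(|\nabla p|)\nabla p)\cdot\nabla\bar p_t$ into a coercive, positive-definite quadratic form in $\nabla\bar p_t$ (which is dropped) plus lower-order terms involving $\nabla\Psi_t$, $\Psi_{tt}$, and bounded functions of $K$ and $\nabla p$, the structural bounds \eqref{K-est-3}--\eqref{K-est-2} guarantee that the coefficients of the remaining terms are bounded by constants depending only on $g$, so Cauchy--Schwarz absorbs them into $G_4(\tau)$. This yields
\[
\frac{d}{d\tau}\int_U \bar p_t^2(x,\tau)\,dx \le C\Big(\int_U \bar p_t^2(x,\tau)\,dx + G_4(\tau)\Big),
\]
with $C$ independent of $t$ and of the solution. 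The uniform Gronwall lemma applied on $[t-1/2,t]$ then bounds $\int_U \bar p_t^2(x,t)$ by a constant multiple of $\int_{t-1/2}^t\int_U \bar p_t^2 + \int_{t-1/2}^t G_4$, and substituting \eqref{t6} for the first term yields \eqref{ptUni}. The main obstacle is precisely ensuring that the linear coefficient in this last differential inequality is a universal constant rather than a $\|\nabla p\|$-dependent quantity; this is what \eqref{K-est-2} buys us, and the bookkeeping is already implicit in the derivation of \eqref{newHpt1} in \cite{HIKS1}.
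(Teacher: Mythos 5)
Your proposal follows essentially the same route as the paper: rewrite the known ``integrate from $0$'' estimates as differential inequalities, integrate them over a unit window ending at $t$, and average the lower endpoint over a subinterval to replace the pointwise initial term by a window integral controlled by \eqref{t1}. The treatments of \eqref{t1} and \eqref{ptUni} are in fact nearly identical to the paper's (which integrates explicitly rather than invoking the Foias--Temam lemma by name, but this is the same device).

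One small slip to flag for \eqref{t6}: the differential inequality you write,
\begin{equation*}
\frac{d}{d\tau}\int_U H(|\nabla p(\tau)|)\,dx + \int_U \bar p_t^2(x,\tau)\,dx \le C G_3(\tau),
\end{equation*}
is not what testing \eqref{p:eq} against $\bar p_t$ actually yields. The flux term $\int_U K(|\nabla p|)\nabla p\cdot\nabla\Psi_t\,dx$ must be absorbed by Cauchy--Schwarz, which leaves a term $\varepsilon\int_U H(|\nabla p|)\,dx$ on the right (trying instead to integrate this term by parts back to $\int_U p_t\Psi_t\,dx$ fails because $\Psi_t$ does not vanish on $\Gamma$, so a nonzero boundary integral appears). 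The correct inequality --- this is (3.17) of Lemma 3.3 in \cite{HI1}, which the paper invokes with $\varepsilon=1$ --- reads
\begin{equation*}
\frac{d}{d\tau}\int_U H(|\nabla p(\tau)|)\,dx \le -\int_U \bar p_t^2\,dx + \int_U H(|\nabla p(\tau)|)\,dx + C G_2(\tau).
\end{equation*}
This extra $+\int_U H$ term is harmless for your argument: after integrating in $\tau$ and averaging the starting point, it contributes another $\int_{t-1}^t\int_U H\,dx\,d\tau$ on the right, which is controlled by exactly the same application of \eqref{t1} you already invoke. So with this correction your argument coincides with the paper's and \eqref{t6} follows as claimed.
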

\begin{proof}
The proof follows Theorems 4.4 and 4.5 in \cite{HI2}.

{\bf Proof of \eqref{t1}.} Using  inequality (3.4) of Lemma 3.1 in \cite{HI1}, one has
\beq\label{origin:eq}
\frac{d}{dt}  \int_U \bar{p}^2(x,t) dx \leq -C \int_U H(|\nabla p|)dx +CG_1(t). 
\eeq  
Dropping the negative term on the right-hand side of \eqref{origin:eq}, and integrating it from $t-1$ to $t$,  we obtain \eqref{t1}. 

{\bf Proof of \eqref{t6}.} Using (3.17) of Lemma 3.3 in \cite{HI1} with $\varep=1$, one has
\beq\label{origin:eq1}
\frac{d}{dt}  \int_U H(|\nabla p|) dx \le -\int_U \bar{p}_t^2(x,t)dx + \int_U H(|\nabla p|)dx +CG_2(t). 
\eeq
Let $s\in[t-1,t]$.  Integrating \eqref{origin:eq1} from $s$ to $t$ we have 
\begin{multline*}
\int_U H(|\nabla p(x,t)|)dx + \int_s^t \int_U \bar{p}_t^2(x,\tau) dxd\tau\\
\le \int_U H(|\nabla p|)(x,s)dx + \int_s^t \int_U H(|\nabla p|)dxd\tau +C \int_s^t G_2(\tau)d\tau.
\end{multline*}
Thus,
\begin{multline} \label{t2}
\int_UH(|\nabla p(x,t)|)dx + \int_s^t \int_U \bar{p}_t^2(x,\tau) dxd\tau\\
\le \int_U H(|\nabla p|)(x,s)dx + \int_{t-1}^t \int_U H(|\nabla p|)dxd\tau +C \int_{t-1}^t G_2(\tau)d\tau.
\end{multline}
Integrating \eqref{t2} in $s$ from $t-1$ to $t$, and using \eqref{t1}, we obtain
 \begin{multline}  \label{t5}
\int_U H(|\nabla p(x,t)|)dx+\int_{t-1}^t\int_s^t \int_U \bar{p}_t^2(x,\tau) dxd\tau ds
 \le 2\int_{t-1}^t \int_U H(|\nabla p|)dxd\tau  + C\int_{t-1}^tG_2(\tau)d\tau\\
\le C \int_U \bar{p}^2(x,t-1) dx + C\int_{t-1}^tG_1(\tau)d\tau  + C\int_{t-1}^tG_2(\tau)d\tau.
\end{multline}
Observe that    
\begin{align*}
& \int_{t-1}^t\int_s^t \int_U \bar{p}_t^2(x,\tau) dxd\tau ds  =\int_{t-1}^t\int_{t-1}^\tau \int_U \bar{p}_t^2(x,\tau) dx ds d\tau\\
& \ge \int_{t-1/2}^t\int_{t-1}^\tau \int_U \bar{p}_t^2(x,\tau) dx ds d\tau
  \ge \frac{1}{2} \int_{t-1/2}^t \int_U \bar{p}_t^2(x,\tau) dx d\tau. 
\end{align*}
Therefore \eqref{t6} follows from \eqref{t5}.   

{\bf Proof of \eqref{ptUni}.} From (3.37) of Proposition 3.11 in \cite{HI1} we have 
 \beqs
 \frac{d}{dt}\int_U\bar{p}_t^2 dx \le -C\int_U K(|\nabla p|)|\nabla p_t|^2 dx +C\int_U|\nabla \Psi_t|^2 dx -\int_U \bar{p}_t\Psi_{tt} dx.
 \eeqs
Dropping the first term on the right-hand side and using Cauchy's inequality give
 \beq\label{ptprime}
 \frac{d}{dt}\int_U\bar{p}_t^2 dx\le C\int_U|\nabla \Psi_t|^2 dx +\frac 1 2\int_U |\bar{p}_t|^2 dx+\frac 12 \int_U |\Psi_{tt}|^2 dx.
 \eeq
For $s\in[t-1/2,t]$, integrating \eqref{ptprime} from $s$ to $t$ gives    
 \begin{align*}
 \int_U\bar{p}_t^2(x,t) dx  &\le \int_U\bar{p}_t^2(x,s) dx+C\int_{s}^t\int_U|\nabla \Psi_t|^2+|\Psi_{tt}|^2 dxd\tau +\frac 1 2\int_s^t\int_U |\bar{p}_t(x,\tau)|^2dxd\tau,\\
 \int_U\bar{p}_t^2(x,t) dx &\le \int_U\bar{p}_t^2(x,s) dx+C\int_{t-1}^t\int_U|\nabla \Psi_t|^2+|\Psi_{tt}|^2 dxd\tau +\frac 1 2\int_{t-1/2}^t\int_U |\bar{p}_t(x,\tau)|^2dxd\tau.
\end{align*}
Integrating the last inequality in $s$ from $t-1/2$ to $t$ yields
\beqs
\frac12\int_U\bar{p}_t^2(x,t) dx\le \frac54\int_{t-1/2}^t\int_U\bar{p}_t^2(x,s) dx ds+C\int_{t-1}^t\int_U|\nabla \Psi_t)|^2+|\Psi_{tt}|^2 dxd\tau. 
\eeqs
Using \eqref{t6} for the first integral on the right-hand side, we have
\begin{multline}\label{thalf1}
\int_U\bar{p}_t^2(x,t) dx\le C\int_U \bar{p}^2(x,t-1)dx+C\int_{t-1}^t G_3(\tau)d\tau+C\int_{t-1}^t\int_U|\nabla \Psi_t)|^2+|\Psi_{tt}|^2 dxd\tau
. 
\end{multline} 
Using the fact $\int_U|\nabla \Psi_t|^2dx\le CG_2(t)\le CG_3(t)$, we obtain \eqref{ptUni} from \eqref{thalf1}. The proof is complete.  
\end{proof}
Combining the above, we have the following specific estimates which will be used conveniently in subsequent sections.

\begin{corollary}\label{UGcor}
Let $\alpha\ge \widehat 2$.

{\rm (i)} For $t\ge 1$,
\beq\label{all1}
\int_{t-1}^t \int_U H(|\nabla p(x,\tau)|)dx d\tau\le C\Big(1+ \int_U|\bar p_0(x)|^{\alpha} dx
+ [ Env A(\alpha,t)]^\frac{\alpha}{\alpha-a} + \int_{t-1}^tG_1(\tau)d\tau\Big ),
\eeq
\beq\label{all2}
 \int_{t-1/2}^t \int_U \bar{p}_t^2(x,\tau) dx d\tau\le C\Big(1+ \int_U|\bar{p}_0(x)|^{\alpha} dx
+ [ Env A(\alpha,t)]^\frac{\alpha}{\alpha-a} + \int_{t-1}^tG_3(\tau)d\tau\Big ).
\eeq

{\rm (ii)} If $A(\alpha)<\infty$ then
\beq\label{lim1}
  \limsup_{t\rightarrow\infty} \int_U H(|\nabla p(x,t)|)dx \le C\Big (1+A(\alpha)^\frac {\alpha} {\alpha-a} +\limsup_{t\to\infty} \int_{t-1}^tG_3(\tau)d\tau\Big),
\eeq
\beq\label{lim2}
  \limsup_{t\rightarrow\infty} \int_U\bar{p}_t^2(x,t) dx \le C\Big (1+A(\alpha)^\frac {\alpha} {\alpha-a} +\limsup_{t\to\infty} \int_{t-1}^tG_4(\tau)d\tau\Big),
\eeq
\beq\label{lim3}
  \limsup_{t\rightarrow\infty} \int_{t-1}^t \int_U H(|\nabla p(x,\tau)|)dx d\tau \le C\Big (1+A(\alpha)^\frac {\alpha} {\alpha-a} +\limsup_{t\to\infty} \int_{t-1}^tG_1(\tau)d\tau\Big),
\eeq
\beq\label{lim4}
  \limsup_{t\rightarrow\infty}  \int_{t-1/2}^t \int_U \bar{p}_t^2(x,\tau) dx d\tau \le C\Big (1+A(\alpha)^\frac {\alpha} {\alpha-a} +\limsup_{t\to\infty} \int_{t-1}^tG_3(\tau)d\tau\Big).
\eeq

{\rm (iii)} If $\beta(\alpha)<\infty$ then there is $T>0$ such that one has  for all $t\ge T$ that 
 \beq\label{large1}
 \int_U H(|\nabla p(x,t)|)dx \le C\Big (1+\beta(\alpha)^\frac {\alpha}{\alpha-2a} + A(\alpha,t-1)^\frac{\alpha}{\alpha-a} +\int_{t-1}^tG_3(\tau)d\tau\Big ),
 \eeq
 \beq\label{large2}
 \int_U\bar{p}_t^2(x,t) dx \le C\Big (1+\beta(\alpha)^\frac {\alpha}{\alpha-2a} + A(\alpha,t-1)^\frac{\alpha}{\alpha-a} +\int_{t-1}^tG_4(\tau)d\tau\Big ),
 \eeq
  \beq\label{large3}
 \int_{t-1}^t \int_U H(|\nabla p(x,\tau)|)dx d\tau \le C\Big (1+\beta(\alpha)^\frac {\alpha}{\alpha-2a} + A(\alpha,t-1)^\frac{\alpha}{\alpha-a} +\int_{t-1}^tG_1(\tau)d\tau\Big ),
 \eeq
 \beq\label{large4}
\int_{t-1/2}^t \int_U \bar{p}_t^2(x,\tau) dx d\tau \le C\Big (1+\beta(\alpha)^\frac {\alpha}{\alpha-2a} + A(\alpha,t-1)^\frac{\alpha}{\alpha-a} +\int_{t-1}^tG_3(\tau)d\tau\Big ).
 \eeq
\end{corollary}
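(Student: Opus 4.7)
The plan is to reduce all twelve inequalities to a single pointwise control of $\int_U \bar p^2(x,t-1)\,dx$ in terms of the data, and then substitute this into the four estimates already provided by Lemma \ref{UG}. Since the hypothesis $\alpha\ge\widehat 2=\max\{2,\alpha_*\}$ forces $\widehat\alpha=\alpha$, Theorem \ref{HIKS4.3}(i) applied at time $t-1$ with exponent $\alpha$ gives
\beqs
\int_U|\bar p(x,t-1)|^\alpha dx \le C\Big(1+\int_U|\bar p_0(x)|^\alpha dx+[Env A(\alpha,t-1)]^{\alpha/(\alpha-a)}\Big).
\eeqs
Hölder's inequality on the bounded domain $U$ bounds $\int_U\bar p^2(x,t-1)\,dx$ by $C\big(\int_U|\bar p(x,t-1)|^\alpha dx\big)^{2/\alpha}$, and the elementary bound $x^{2/\alpha}\le 1+x$ then yields the same type of estimate for $\int_U\bar p^2(x,t-1)\,dx$. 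Using that $Env A(\alpha,\cdot)$ is nondecreasing, I can replace $A(\alpha,t-1)$ by $A(\alpha,t)$ on the right.

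For part (i), substituting this bound into \eqref{t1} of Lemma \ref{UG} produces \eqref{all1}. For \eqref{all2}, I drop the nonnegative term $\int_U H(|\nabla p(x,t)|)\,dx$ from the left-hand side of \eqref{t6} and substitute the same bound on the right.

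For part (ii), I repeat the scheme with Theorem \ref{HIKS4.3}(ii) in place of (i). Under $A(\alpha)<\infty$ it yields
\beqs
\limsup_{t\to\infty}\int_U|\bar p(x,t-1)|^\alpha dx \le C\big(1+A(\alpha)^{\alpha/(\alpha-a)}\big),
\eeqs
so the same Hölder reduction gives the analogous bound for $\limsup_{t\to\infty}\int_U\bar p^2(x,t-1)\,dx$. Taking $\limsup_{t\to\infty}$ of each of \eqref{t6} (once keeping and once discarding the $H$-term on the left), of \eqref{ptUni}, and of \eqref{t1}, and using subadditivity of $\limsup$, I obtain \eqref{lim1}, \eqref{lim4}, \eqref{lim2}, and \eqref{lim3} respectively. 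For part (iii), Theorem \ref{HIKS4.3}(iii) provides $T>0$ such that for $t-1\ge T$,
\beqs
\int_U|\bar p(x,t-1)|^\alpha dx \le C\big(1+\beta(\alpha)^{\alpha/(\alpha-2a)}+A(\alpha,t-1)^{\alpha/(\alpha-a)}\big);
\eeqs
after the Hölder step, substitution into \eqref{t1}, \eqref{t6} (with and without the $H$-term on the left), and \eqref{ptUni} yields \eqref{large1}--\eqref{large4}, valid for $t\ge T+1$, and relabeling $T+1$ as $T$ gives the statement.

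The proof is essentially bookkeeping, and the only mildly delicate point is making sure the precise exponents $\alpha/(\alpha-a)$ on $A$ and $\alpha/(\alpha-2a)$ on $\beta$ survive the passage from the $L^\alpha$-control furnished by Theorem \ref{HIKS4.3} to the $L^2$-control required by Lemma \ref{UG}. This is exactly what the inequality $x^{2/\alpha}\le 1+x$ buys us: no genuine loss in the exponents, only in the constant $C$.
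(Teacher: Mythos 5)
Your proposal is correct and follows essentially the same route as the paper: apply Lemma \ref{UG} and then control $\int_U\bar p^2(x,t-1)\,dx$ via Theorem \ref{HIKS4.3} at exponent $\alpha=\widehat\alpha$ evaluated at $t-1$. The only cosmetic difference is that the paper passes from $L^2$ to $L^\alpha$ via the pointwise bound $y^2\le 1+y^\alpha$ (equation \eqref{alpha2}) rather than via H\"older followed by $x^{2/\alpha}\le 1+x$; the two reductions are equivalent for your purposes.
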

\begin{proof} 
Note that $\widehat \alpha=\alpha$.

(i) For \eqref{all1}, resp. \eqref{all2}, we combine \eqref{t1}, resp. \eqref{t6}, with  inequality
 \beq\label{alpha2}
  \int_U \bar{p}^2(x,t-1) dx \le \int_U (1+|\bar{p}(x,t-1)|^\alpha)  dx,
 \eeq
and the estimate \eqref{pbar:ineq1} for $t-1$.

(ii) We combine estimates in Lemma \ref{UG} with \eqref{alpha2} and the limit \eqref{limsupPbar}.

(iii) This part is similar to part (ii) with the use of \eqref{pbar:ineq2} in place of \eqref{limsupPbar}.
\end{proof}

\myclearpage
\section{Interior $L^\infty$-estimates for pressure}\label{LinftySec}

In this section we estimate the $L^\infty$-norm of the pressure. \textit{Hereafter, $\alpha$ is a number that satisfies \eqref{alcond}.}
We use $\kappa_j$ to denote a number of exponents that depend on $\alpha$. Let
\beq\label{deltadef}
 \kappa_0 =\alpha(1+(2-a)/n)-a,\quad \kappa_1= 1/\delta_1 \quad\text{and}\quad \kappa_2= 1/\delta_2,
\eeq
where
$\delta_1= 1-\alpha/\kappa_0$ and $\delta_2= (1-a/\alpha)(1-\alpha_*/\alpha)$.
Note that $\delta_1,\delta_2\in (0,1)$, hence, $\kappa_1,\kappa_2>1$.

We start with estimating the $L^\infty$-norm in terms of the $L^\alpha$-norm.
\begin{theorem}\label{theo42}
Let $U'\Subset U$. If $T_0\ge 0$, $T>0$ and $\theta\in(0,1)$ then 
\beq\label{e1}
\sup_{[T_0+\theta T,T_0+T]}\norm{p(t)}_{L^\infty(U')}
\le C(1+T)^\frac{\kappa_1}{\kappa_0}\Big(1+(\theta T)^{-1}\Big)^\frac{\kappa_1}{\alpha-a} \Big(1+\| p\|_{L^\alpha(U\times(T_0, T_0+T))}\Big)^{\kappa_2}. 
\eeq
\end{theorem}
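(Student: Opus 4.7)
I would prove \eqref{e1} by an $L^\alpha$-based De~Giorgi iteration with simultaneously shrinking truncation levels, spatial domains, and time intervals. Fix $k>0$ (to be chosen), put $k_j=k(1-2^{-j-1})\nearrow k$, choose nested open sets $U'\Subset\cdots\Subset\Omega_{j+1}\Subset\Omega_j\Subset U$ and times $T_0<\tau_j\nearrow T_0+\theta T$ with $\operatorname{dist}(\Omega_{j+1},\partial\Omega_j)\sim 2^{-j}\operatorname{dist}(U',\partial U)$ and $\tau_{j+1}-\tau_j\sim 2^{-j}\theta T$, and pick smooth cut-offs $\xi_j\in C_c^\infty(\Omega_j)$, $\xi_j\equiv 1$ on $\Omega_{j+1}$, $|\nabla\xi_j|\le C\,2^j$, and $\eta_j\in C^\infty(\R)$, $\eta_j\equiv 0$ on $(-\infty,\tau_j]$, $\eta_j\equiv 1$ on $[\tau_{j+1},\infty)$, $|\eta_j'|\le C\,2^j(\theta T)^{-1}$. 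Set $w_j=(p-k_j)_+$ and $u_j=w_j\,\xi_j^{\,2}\eta_j^{\,2/\alpha}$, so that $u_j$ vanishes on $\partial U$ and at $t=\tau_j$.

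\textbf{Energy estimate.} Using $w_j^{\alpha-1}\xi_j^{2\alpha}\eta_j^2$ as a test function in \eqref{lin-p}, integrating by parts over $U\times(\tau_j,T_0+T)$, and invoking \eqref{K-est-3}, \eqref{Kestn}, I absorb the cross term generated by $\nabla\xi_j$ into the good energy piece $\iint w_j^{\alpha-2}|\nabla w_j|^{2-a}\xi_j^{2\alpha}\eta_j^2\,dxdt$ via Young's inequality tuned to the $(2-a)$-homogeneity of the Forchheimer flux. Since $w_j\le w_{j-1}$ and $\supp|\nabla\xi_j|\cup\supp|\eta_j'|\subset\{\xi_{j-1}\eta_{j-1}\equiv 1\}$, this yields
\beqs
\esssup_{t\in[\tau_j,T_0+T]}\!\int_U u_j^\alpha\,dx+\iint u_j^{\alpha-2}|\nabla u_j|^{2-a}\,dxdt\le C\,B^{j}\bigl(1+(\theta T)^{-1}\bigr) Z_{j-1},
\eeqs
where $Z_j=\iint u_j^\alpha\,dxdt$ and $B>1$ is a universal geometric constant.

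\textbf{Sobolev upgrade and iteration.} Lemma~\ref{Sob4} applied with $\delta=0$ to $u_j$ (which vanishes on $\partial U$ and at $t=\tau_j$) gives $\iint u_j^{\kappa_0}\,dxdt\le C[[u_j]]^{\kappa_0}$, whose two summands are exactly the two left-hand sides of the energy estimate raised to powers $\kappa_0/\alpha$ and $\kappa_0/(\alpha-a)$ respectively. On the set $\{w_{j+1}>0\}\cap\{\xi_{j+1}\eta_{j+1}^{\,2/\alpha}\equiv 1\}$ one has $u_j=w_j\ge k_{j+1}-k_j=2^{-j-2}k$, hence by Chebyshev
\beqs
Z_{j+1}\le\iint_{\{u_j\ge 2^{-j-2}k\}}\!u_j^\alpha\,dxdt\le C\,B_1^{\,j}k^{-(\kappa_0-\alpha)}\iint u_j^{\kappa_0}\,dxdt.
\eeqs
Chaining the three preceding displays and performing a standard reindexing produces a one-step recursion
\beqs
Z_{j+1}\le C\,B_2^{\,j}\bigl(1+(\theta T)^{-1}\bigr)^{\kappa_0/(\alpha-a)}k^{-(\kappa_0-\alpha)}\bigl(Z_{j}^{\,1+\delta_1}+Z_{j}^{\,1+\delta_2'}\bigr),
\eeqs
with $\delta_1=1-\alpha/\kappa_0$ and $\delta_2'$ reproducing $\delta_2$ from \eqref{deltadef}. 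Lemma~\ref{multiseq} then forces $Z_j\to 0$ as soon as $Z_0\le\|p\|_{L^\alpha(U\times(T_0,T_0+T))}^\alpha$ lies below the explicit smallness threshold it provides; solving this threshold for $k$ yields precisely \eqref{e1}, and the identical argument applied to $-p$ controls $\|p\|_{L^\infty(U')}$.

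\textbf{Main obstacle.} The principal difficulty is the simultaneous book-keeping of three sources of $2^j$-growth (level spacing, $\nabla\xi_j$, $\eta_j'$) together with the two distinct Sobolev gains $\delta_1,\delta_2'$, so that the resulting smallness condition on $k$ reproduces exactly the exponents $\kappa_1/\kappa_0$, $\kappa_1/(\alpha-a)$, and $\kappa_2$ appearing in \eqref{e1}. Equally delicate is the Young-inequality absorption of the $\nabla\xi_j$ cross term, which must respect the $(2-a)$-homogeneity of the Forchheimer flux; any mismatch there contaminates $Z_0$ with spurious powers of $k$ and breaks the application of Lemma~\ref{multiseq}. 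The hypothesis $\alpha>\alpha_\ast$ in \eqref{alcond} is precisely what keeps both gains $\delta_1,\delta_2'$ strictly positive, which is what allows the $L^\alpha$-based iteration to close without any restriction on the degree of $g$.
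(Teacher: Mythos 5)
Your overall strategy — an $L^\alpha$-based De~Giorgi iteration with shrinking truncation levels, spatial domains, and time intervals, feeding the resulting energy inequality into the parabolic Sobolev estimate of Lemma~\ref{Sob4} and closing via Lemma~\ref{multiseq} — is precisely the paper's. The main implementation difference is where the Sobolev embedding is invoked: you apply Lemma~\ref{Sob4} with $\delta=0$ to the product $u_j=w_j\xi_j^2\eta_j^{2/\alpha}$, which vanishes on $\partial U$, whereas the paper applies the ball version \eqref{uR} with $\delta=1$ directly to $p^{(k_{i+1})}$ on the shrinking balls $U_i$; that $\delta=1$ is exactly the source of the $(1+T)^{\kappa_1/\kappa_0}$ factor in \eqref{e1}, so your variant, if completed, would actually give a cleaner bound. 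A second cosmetic difference: the paper passes from $L^{\kappa_0}$ to $L^\alpha$ by H\"older picking up $|A_{i+1,i}|^{1/\alpha-1/\kappa_0}$ and only then uses Chebyshev, while you apply Chebyshev directly; the two routes give exponent pairs on the iteration variable whose ratio is the same, so the final $(1+\|p\|_{L^\alpha})^{\kappa_2}$ and $(1+(\theta T)^{-1})^{\kappa_1/(\alpha-a)}$ factors come out identically.

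There is, however, a real gap in the energy-to-Sobolev bridge. With $u_j=w_j\xi_j^2\eta_j^{2/\alpha}$ and test function $w_j^{\alpha-1}\xi_j^{2\alpha}\eta_j^2$, the Caccioppoli inequality controls $\iint w_j^{\alpha-2}|\nabla w_j|^{2-a}\xi_j^{2\alpha}\eta_j^2\,dx\,dt$; but the second summand of $[[u_j]]$ requires $\iint u_j^{\alpha-2}|\nabla u_j|^{2-a}\,dx\,dt$, and after expanding $\nabla u_j=\xi_j^2\eta_j^{2/\alpha}\nabla w_j+2w_j\xi_j\nabla\xi_j\,\eta_j^{2/\alpha}$ the dominant piece is $\iint w_j^{\alpha-2}|\nabla w_j|^{2-a}\,\xi_j^{2\alpha-2a}\eta_j^{2(\alpha-a)/\alpha}\,dx\,dt$, whose cut-off powers are strictly \emph{lower} than $\xi_j^{2\alpha}\eta_j^2$. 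Since $0\le\xi_j,\eta_j\le1$ this quantity is generically \emph{larger} than what the energy estimate controls, so the displayed energy bound does not in fact dominate $[[u_j]]$ as claimed. This is fixable — e.g.\ split into the region $\{\xi_j\eta_j=1\}$ (where all cut-off factors equal one) and the transition region (where $\xi_{j-1}\eta_{j-1}\equiv1$, $w_j\le w_{j-1}$ and $\nabla w_j=\nabla w_{j-1}$, so the level-$(j-1)$ energy estimate applies), or rebalance the powers of $\xi_j,\eta_j$ between the test function and $u_j$, or interpolate a half-step cut-off — but as written it is a missing step. Finally, a minor bookkeeping error: your direct Chebyshev step yields exponents $\kappa_0/\alpha$ and $\kappa_0/(\alpha-a)$ on $Z_{j-1}$, not the claimed $1+\delta_1$ and $1+\delta_2'$ (those arise in the paper's H\"older-based route on $Y_i=\|p^{(k_i)}\|_{L^\alpha}$); the exponents you actually obtain are different numbers, though, as noted, their ratio again equals $\kappa_2$, so the discrepancy is harmless to \eqref{e1}.
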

\begin{proof} Without loss of generality, we assume $T_0=0$. 
For $k\ge 0$, define $p^{(k)}=\max\{p-k,0\}$ and denote by $\chi _k(x,t)$ the characteristic function of the set $\supp p^{(k)}$.
Let $\phi_1(x)$ and $\phi_2(t)$ be cut-off functions with $\phi_1=1$ on $U'$, $\phi_1=0$ on a neighborhood of $\partial U$, and $\phi_2(0)=0$.  
Let $\zeta(x,t)=\phi_1(x)\phi_2(t)$. Multiplying the first equation in \eqref{p:eq} by $|p^{(k)}|^{\alpha-1} \zeta^2$ and integrating over $U$,  we have
\begin{align*}
& \frac1\alpha\ddt  \int_U |p^{(k)}|^\alpha\zeta^2 dx +(\alpha-1)\int_U K(|\nabla p^{(k)}|) |\nabla p^{(k)}|^2 |p^{(k)}|^{\alpha-2} \zeta^2 dx\\
&= \frac2\alpha\int_U |p^{(k)}|^\alpha \zeta \zeta_t dx +2\int_U K(|\nabla p^{(k)}|) (\nabla  p^{(k)}\cdot\nabla \zeta) |p^{(k)}|^{\alpha-1} \zeta dx.
\end{align*}
By Cauchy's inequality, we have for the last integral that
\begin{align*}
2|K(|\nabla p^{(k)}|) (\nabla  p^{(k)}\cdot\nabla \zeta) |p^{(k)}|^{\alpha-1} \zeta |
\le \frac{\alpha-1}{2} K(|\nabla p^{(k)}|)|\nabla p^{(k)}|^{2} |p^{(k)}|^{\alpha-2} \zeta^2 
+ C K(|\nabla p^{(k)}|) |p^{(k)}|^{\alpha}  |\nabla \zeta|^2.
\end{align*}
Combining the above, we obtain
\begin{align*}
&\frac1\alpha\ddt \int_U |p^{(k)}|^\alpha \zeta^2 dx+ \frac{\alpha-1}{2}\int_U K(|\nabla p^{(k)}|)|\nabla p^{(k)}|^{2} |p^{(k)}|^{\alpha-2} \zeta^2 dx \\
&  \quad\le C\int_U | p^{(k)}|^\alpha \zeta|\zeta_t| dx +C\int_U K(|\nabla p^{(k)}|)  |p^{(k)}|^{\alpha}  |\nabla \zeta|^2 dx.
\end{align*}
Using \eqref{Kestn}, we then have
\begin{align*}
&\ddt \int_U |p^{(k)}|^\alpha \zeta^2 dx+ \int_U |\nabla p^{(k)}|^{2-a} |p^{(k)}|^{\alpha-2} \zeta^2 dx \\
&  \quad\le C\int_U | p^{(k)}|^\alpha \zeta|\zeta_t| dx +C\int_U K(|\nabla p^{(k)}|) |p^{(k)}|^{\alpha}  |\nabla \zeta|^2 dx 
+ C\int_U |p^{(k)}|^{\alpha-2}  \zeta^2 dx.
\end{align*}
Using the boundedness of $K(\cdot)$ in the second to last integral, and applying Young's inequality to the last terms yield
\begin{align*}
&\ddt \int_U |p^{(k)}|^\alpha \zeta^2 dx+ \int_U |\nabla p^{(k)}|^{2-a} |p^{(k)}|^{\alpha-2} \zeta^2 dx \\
&  \quad\le C\int_U |p^{(k)}|^\alpha (\zeta^2+\zeta|\zeta_t| +|\nabla \zeta|^2) dx 
+ C\int_U \chi_k\zeta^2 dx.
\end{align*}
For $t\in[0,T]$, integrating from $0$ to $T$, and taking supremum in $t$, we obtain
\beq\label{prena}
\begin{aligned}
&  \sup_{[0,T]} \int_U |p^{(k)}|^\alpha \zeta^2 dx   + \int_0^T\int_U |\nabla p^{(k)}|^{2-a} |p^{(k)}|^{\alpha-2} \zeta^2 dx dt \\
& \le C\int_0^T \int_U |p^{(k)}|^\alpha (\zeta^2+\zeta|\zeta_t| +|\nabla \zeta|^2) dx 
+ C\int_0^T \int_U \chi_k\zeta^2 dx.\end{aligned}
\eeq

Let $x_0$ be any given point in $\bar U'$. Denote  $\rho={\rm dist}(\bar U',\partial U)>0$. 
Let $M_0 >0$ be fixed which will be determined later.
For $i\ge 0$, define 
\beq\label{indexSeq2}
k_i= M_0(1-2^{-i}),\quad
t_i =\theta T( 1- 2^{-i}),\quad 
\rho_i= \frac 1 4\rho(1+ 2^{-i}).
\eeq 
Then 
$t_0=0<t_1<\ldots<\theta T$ and $\rho_0=\rho/2>\rho_1>\ldots >\rho/4 >0.$
Note that 
\beq \label{limSeq2}
\lim_{i\to\infty}t_i=\theta T\quad\text{and}\quad 
\lim_{i\to\infty}\rho_i=\rho/4.
\eeq
  
Let $U_i =\{x : \norm{x-x_0}< \rho_i\} $ then $ U_{i+1} \Subset U_i$ for $i=0,1,2,\ldots$. 
For $i,j\ge  0$, we denote 
\beq\label{setDef2} 
\mathcal Q_i =U_i\times (t_i,T)\quad\text { and } \quad A_{i,j} =\{(x,t)\in \mathcal Q_j:  p(x,t)>k_i\}.
\eeq

For each $\mathcal Q_i$, we use a cut-off function $\zeta_i(x,t)$ with $\zeta_i\equiv 1$ in $\mathcal Q_{i+1}$ and 
$\zeta_i\equiv 0$ on $Q_T\setminus\mathcal Q_{i}$, and 
\beq\label{cutoffBound2}
|(\zeta_i)_t|\le \frac C{t_{i+1}-t_i} = \frac {C2^{i+1}}{\theta T}, \quad \text { and } \quad
|\nabla \zeta_i | \le \frac {C}{\rho_{i}-\rho_{i+1}} = \frac {C 2^{i+1}}{4\rho}.\eeq
for some $C>0$. 
Applying \eqref{prena} with $k=k_{i+1}$ and $\zeta=\zeta_i$ gives
\beq\label{pren1}
\begin{aligned}
&  \sup_{[0,T]} \int_U |p^{(k_{i+1})}|^\alpha \zeta_i^2 dx   + \int_0^T\int_U |\nabla p^{(k_{i+1})}|^{2-a}| p^{(k_{i+1})}|^{\alpha-2} \zeta_i^2 dx dt\\
& \quad\quad\le  \int_0^T\int_U |p^{(k_{i+1})}|^\alpha (\zeta_i^2+\zeta_i|\zeta_{it}|+|\nabla\zeta_{i}|^2) dxdt
 +C\mathcal  \int_0^T \int_U \chi_{k_{i+1}} \zeta_i^2 dxdt.
\end{aligned}
\eeq
Define 
\beqs
F_i\eqdef   \sup_{[t_{i+1},T]} \int_{U_{i+1}} |p^{(k_{i+1})}|^\alpha  dx   + \int_{t_{i+1}}^T\int_{U_{i+1}} |\nabla p^{(k_{i+1})}|^{2-a}|p^{(k_{i+1})}|^{\alpha-2}  dx dt.
\eeqs
Then \eqref{pren1} yields   
\beq\label{pren2}
\begin{aligned}
F_i &\le  \int_{t_i}^{T} \int_{U} |p^{(k_{i+1})}|^\alpha (\zeta_i^2+\zeta_i|\zeta_{it}|+|\nabla \zeta_i|^2) dxdt
 +C  \Big (\int_{t_{i}}^T\int_{U_i} \chi_{k_{i+1}} dx dt\Big )\\
 &\le C4^i((\theta T)^{-1}+1)  \| p^{(k_{i+1})}\|_{L^\alpha(A_{i+1,i})}^\alpha
 +C|A_{i+1,i}|.
\end{aligned}
\eeq
Since $\|p^{(k_i)}\|_{L^\alpha(A_{i,i})}\ge  \|p^{(k_i)}\|_{L^\alpha(A_{i+1,i})}\ge (k_{i+1}-k_i) |A_{i+1,i}|^{1/\alpha}$,
we have
\beq\label{a2}
  |A_{i+1,i}| \le (k_{i+1}-k_i)^{-\alpha} \| p^{(k_i)}\|_{L^\alpha(A_{i,i})}^{\alpha} \le C 2^{\alpha i} M_0^{-\alpha}\| p^{(k_i)}\|_{L^\alpha(A_{i,i})}^{\alpha}.
\eeq
This and \eqref{pren2} imply
\beq\label{Fn}
\begin{aligned}
F_i &\le  C4^i(1+(\theta T)^{-1})  \|p^{(k_{i+1})}\|_{L^\alpha(A_{i+1,i})}^\alpha
 +C 2^{\alpha i} M_0^{-\alpha}\| p^{(k_i)}\|_{L^\alpha(A_{i,i})}^{\alpha}\\
 &\le   C 2^{\alpha i}\Big(1+(\theta T)^{-1}+M_0^{-\alpha}\Big) \|p^{(k_i)}\|_{L^\alpha(A_{i,i})}^{\alpha}.
\end{aligned}
\eeq
Note that $\kappa_0$ is the exponent defined in \eqref{expndef}. Applying Lemma~\ref{Sob4} to domain $U_i$, interval $[t_{i+1},T]$ and function $p^{(k_{i+1})}$ with the use of \eqref{uR}, we have
\beq\label{a1}
 \|p^{(k_{i+1})}\|_{L^{\kappa_0}(A_{i+1,i+1})}=\|p^{(k_{i+1})}\|_{L^{\kappa_0}(\mathcal Q_{i+1})}
\le C(1+T)^\frac1{\kappa_0} (F_i^{1/\alpha}+F_i^{1/(\alpha-a)}).
\eeq
Note that the constant $C$ depends on $\rho$. 
Holder's inequality gives
\beqs
\| p^{(k_{i+1})}\|_{L^\alpha(A_{i+1,i+1})}
 \le \| p^{(k_{i+1})}\|_{L^{\kappa_0}(A_{i+1,i+1})} |A_{i+1,i+1}|^{1/\alpha-1/\kappa_0}
 \le \| p^{(k_{i+1})}\|_{L^{\kappa_0}(A_{i+1,i+1})} |A_{i+1,i}|^{1/\alpha-1/\kappa_0}.
\eeqs
It follows from this, \eqref{a1}, \eqref{Fn}, and \eqref{a2} that
\begin{align*}
 \|  p^{(k_{i+1})}\|_{L^\alpha(A_{i+1,i+1})}&
\le C (1+T)^\frac1{\kappa_0}(F_i^{1/\alpha}+F_i^{1/(\alpha-a)})  |A_{i+1,i}|^{1/\alpha-1/\kappa_0}\\
&\le C (1+T)^\frac1{\kappa_0} B^i \Big \{ \Big(1+(\theta T)^{-1}+ M_0^{-\alpha}\Big)^{1/\alpha} \|  p^{(k_i)}\|_{L^\alpha(A_{i,i})}\\
&\quad +\Big(1+(\theta T)^{-1}+M_0^{-\alpha}\Big)^{1/(\alpha-a)} \|  p^{(k_i)}\|_{L^\alpha(A_i)}^{\alpha/(\alpha-a)}
\Big\} M_0^{-1+\alpha/\kappa_0} \| p^{(k_i)}\|_{L^2(A_{i,i})}^{1-\alpha/\kappa_0},
\end{align*}
where $B=2^\alpha$. Now selecting $M_0\ge 1$ we have
\begin{align*}
\|p^{(k_i)}\|_{L^\alpha(A_{i+1,i+1})}&\le C (1+T)^\frac1{\kappa_0} B^i \Big \{ \Big(1+(\theta T)^{-1}\Big)^{1/\alpha}M_0^{-1+\alpha/\kappa_0} \|p^{(k_i)}\|_{L^\alpha(A_{i,i})}^{2-\alpha/\kappa_0}\\
 &\quad+\Big(1+(\theta T)^{-1}\Big)^{1/(\alpha-a)}M_0^{-1+\alpha/\kappa_0} \|p^{(k_i)}\|_{L^\alpha(A_{i,i})}^{\alpha/(\alpha-a)+1- \alpha/ \kappa_0}
\Big\}.
\end{align*}
Denote $ Y_i=\| p^{(k_i)}\|_{L^\alpha(A_{i,i})}.$ Then 
\beqs
Y_{i+1}\le C B^i \Big (D_1 Y_i^{1+\delta_1}+D_2Y_i^{1+\nu_1} \Big ),
\eeqs
where $\delta_1>0$ is defined in \eqref{deltadef}, $\nu_1=\alpha/(\alpha-a)-\alpha/\kappa_0>0$, 
$$D_1=(1+T)^\frac1{\kappa_0}\big(1+(\theta T)^{-1}\big)^{1/\alpha}M_0^{-\delta_1},\quad\text{and}\quad 
D_2=(1+T)^\frac1{\kappa_0}\big(1+(\theta T)^{-1}\big)^{1/(\alpha-a)}M_0^{-\delta_1}.$$
Take $M_0$ sufficiently large such that 
\beq\label{Y1}
Y_0\le C\min\{ D_1^{-1/\delta_1},D_2^{-1/\nu_1}\}.
\eeq
Then by Lemma \ref{multiseq}, $\lim_{i\to\infty} Y_i=0$, consequently,
$\int_{\theta T}^T\int_{B_{\rho/4}(x_0)} | p^{(M_0)}|^\alpha dxdt=0,$
that is
\beqs
 p(x,t)\le M_0 \quad  \text{a.e. in } B_{\rho/4}(x_0)\times (\theta T,T). 
\eeqs
Since $Y_0\le \| p\|_{L^\alpha(Q_T)}$, condition \eqref{Y1} is met if 
$\| p\|_{L^\alpha(Q_T)} \le C\min\{ D_1^{-1/\delta_1},D_2^{-1/\nu_1}\}.$
This, in turn, is satisfied if   
\beqs
M_0 \ge  C (1+T)^\frac1{\kappa_0\delta_1}\Big\{(1+(\theta T)^{-1})^{1/(\alpha\delta_1)} \| p\|_{L^\alpha(Q_T)} + (1+(\theta T)^{-1})^{1/((\alpha-a)\delta_1)} \|p\|_{L^\alpha(Q_T)}^{\nu_1/\delta_1} \Big\}.
\eeqs
Note that $\nu_1/\delta_1=1/\delta_2=\kappa_2>1$. Combining this and condition $M_0\ge 1$ we choose
\beqs
M_0=C(1+T)^\frac{\kappa_1}{\kappa_0} (1+\theta T)^{-1})^\frac{\kappa_1}{\alpha-a}(1+ \| p\|_{L^\alpha(Q_T)})^{\kappa_2},
\eeqs
with an appropriate positive constant $C$. 
By using a finite covering of $\bar U'$, we obtain 
\beqs
 p(x,t)\le M_0 \quad  \text{a.e. in } U'\times (\theta T,T). 
\eeqs
Repeating the argument for $-p$ instead of $p$, we obtain 
 $|p(x,t)|\le M_0$ a.e. in $U'\times [\theta T,T]$,  
and hence \eqref{e1} follows. The proof is complete.
\end{proof}

Combining Theorem \ref{theo42} with estimates in Section \ref{revision}, we obtain the following specific estimates for the $L^\infty$-norm.

\begin{theorem}\label{theo43} Let $U'\Subset U$.

{\rm (i)} If $t\in(0,1)$ then
 \begin{align}
\label{e2}
\norm{p(t)}_{L^\infty(U')}
&\le Ct^{-\frac{\kappa_1}{\alpha-a}}\Big(1 + \|\bar p_0\|_{L^\alpha}+ [ Env A(\alpha,t)]^\frac{1}{\alpha-a} + \|\Psi\|_{L^\alpha(U\times(0,t))}\Big )^{\kappa_2}.
\end{align}

If $t\ge 1$ then
\beq\label{pNsmall}
\norm{p(t)}_{L^\infty(U')}\le C\Big(1+\|\bar p_0\|_{L^\alpha} 
+ [ Env A(\alpha,t)]^\frac{1}{\alpha-a}+ \|\Psi\|_{L^\alpha(U\times(t-1,t))}\Big )^{\kappa_2}.
\eeq

{\rm (ii)} If $ A(\alpha)<\infty$ then
\beq\label{limsupPbarI}
  \limsup_{t\rightarrow\infty} \norm{p(t)}_{L^\infty(U')} \le C\big (1+A(\alpha)^\frac{1}{\alpha-a}+\limsup_{t\to\infty} \|\Psi\|_{L^\alpha( U\times (t-1,t) )}\big)^{\kappa_2}.
\eeq 

{\rm (iii)} If $\beta(\alpha)<\infty$ then there is $T>0$ such that
 \beq\label{PbarI}
  \norm{p(t)}_{L^\infty(U')}\le C \big (1+\beta(\alpha)^\frac 1{\alpha-2a} +\| A(\alpha,\cdot)\|_{L^\frac\alpha{\alpha-a}(t-1,t)}^\frac 1{\alpha-a} +  \|\Psi\|_{L^\alpha( U\times (t-1,t) )}\big )^{\kappa_2}
  \eeq
 for all $t\ge T$.
\end{theorem}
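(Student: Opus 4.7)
The theorem will follow by feeding $L^{\alpha}$-bounds for $p=\bar p+\Psi$ into the $L^{\infty}$-estimate of Theorem~\ref{theo42}. The $L^{\alpha}$-control of $\bar p$ on short time slabs will come from Theorem~\ref{HIKS4.3}, while the splitting $\|p\|_{L^\alpha}\le\|\bar p\|_{L^\alpha}+\|\Psi\|_{L^\alpha}$ handles the boundary extension. Note that condition \eqref{alcond} gives $\widehat\alpha=\alpha$, so every invocation of Theorem~\ref{HIKS4.3} applies directly with the same exponent $\alpha$ that appears in the statements.

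\textbf{Part (i).} For $t\in(0,1)$, I will apply Theorem~\ref{theo42} with $T_0=0$, $T=t$, and $\theta=1/2$; the interval $[T_0+\theta T,T_0+T]=[t/2,t]$ contains $t$. Here $(1+T)^{\kappa_1/\kappa_0}$ is bounded while $(1+(\theta T)^{-1})^{\kappa_1/(\alpha-a)}$ produces the singular factor $t^{-\kappa_1/(\alpha-a)}$. For $t\ge 1$ I will instead take $T_0=t-1$, $T=1$, $\theta=1/2$, so that both prefactors are universal constants and the time slab is $(t-1,t)$. To estimate $\|\bar p\|_{L^\alpha}$ on these slabs, I apply \eqref{pbar:ineq1} pointwise in $s$ and integrate; using monotonicity of $Env\,A(\alpha,\cdot)$ to pull the envelope outside the $s$-integral gives
\[
\int_{t_1}^{t_2}\!\!\int_U|\bar p(x,s)|^\alpha\,dx\,ds\le C(t_2-t_1)\bigl(1+\|\bar p_0\|_{L^\alpha}^\alpha+[Env\,A(\alpha,t_2)]^{\alpha/(\alpha-a)}\bigr).
\]
Taking the $\alpha$-th root, combining with $\|\Psi\|_{L^\alpha}$, and raising to the power $\kappa_2$ in Theorem~\ref{theo42} yields \eqref{e2} and \eqref{pNsmall}.

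\textbf{Parts (ii) and (iii).} These follow by passing $\limsup_{t\to\infty}$ through \eqref{pNsmall}. For (ii), \eqref{limsupPbar} together with a standard $\varepsilon$-argument gives
\[
\limsup_{t\to\infty}\int_{t-1}^t\!\!\int_U|\bar p|^\alpha\,dx\,ds\le\limsup_{s\to\infty}\int_U|\bar p(s)|^\alpha\,dx\le C\bigl(1+A(\alpha)^{\alpha/(\alpha-a)}\bigr),
\]
and continuity and monotonicity of $x\mapsto(1+x)^{\kappa_2}$ combined with subadditivity of $\limsup$ yield \eqref{limsupPbarI}. For (iii), applying \eqref{pbar:ineq2} pointwise for $s\ge T$ and integrating over $s\in(t-1,t)$ produces
\[
\int_{t-1}^t\!\!\int_U|\bar p|^\alpha\,dx\,ds\le C\Bigl(1+\beta(\alpha)^{\alpha/(\alpha-2a)}+\|A(\alpha,\cdot)\|_{L^{\alpha/(\alpha-a)}(t-1,t)}^{\alpha/(\alpha-a)}\Bigr),
\]
which is exactly the quantity appearing in \eqref{PbarI}; substituting into \eqref{pNsmall} for $t\ge T+1$ finishes the argument.

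\textbf{Main obstacle.} There is no essential analytic difficulty once Theorems~\ref{theo42} and~\ref{HIKS4.3} are available; the main care is bookkeeping of exponents, specifically converting $[Env\,A(\alpha,t)]^{\alpha/(\alpha-a)}$ through the $\alpha$-th root to obtain the exponent $1/(\alpha-a)$ written inside the $\kappa_2$-power, and confirming that the simple choice $\theta=1/2$ absorbs all resulting constants without weakening the announced singular rate $t^{-\kappa_1/(\alpha-a)}$ for small $t$.
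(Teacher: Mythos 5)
Your proposal is correct and follows essentially the same route as the paper: apply Theorem~\ref{theo42} with $T_0=0$, $T=t$, $\theta=1/2$ for small $t$ (resp.\ $T_0=t-1$, $T=1$, $\theta=1/2$ for $t\ge1$), split $\|p\|_{L^\alpha}\le\|\bar p\|_{L^\alpha}+\|\Psi\|_{L^\alpha}$, and control $\|\bar p\|_{L^\alpha}$ on the time slab via \eqref{pbar:ineq1} for (i), via \eqref{limsupPbar} for (ii), and via \eqref{pbar:ineq2} for (iii). The exponent bookkeeping (the $\alpha$-th root turning $\alpha/(\alpha-a)$ into $1/(\alpha-a)$, and $\theta=1/2$ producing exactly $t^{-\kappa_1/(\alpha-a)}$) matches the paper's computations.
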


\begin{proof}
(i) Note that $\alpha=\widehat\alpha$. 
Let $t\in(0,1)$. Applying \eqref{e1} for $T=t$ and $\theta=1/2$, we have
\begin{align*}
\|p(t)\|_{L^\infty(U')}
&\le C(1+t^{-\frac{\kappa_1}{\alpha-a}} )(1+\|p\|_{L^\alpha(U\times(0,t))}^{\kappa_2})\\
&\le Ct^{-\frac{\kappa_1}{\alpha-a}} (1+\|\bar p\|_{L^\alpha(U\times(0,t))}+\|\Psi\|_{L^\alpha(U\times(0,t))})^{\kappa_2}.
\end{align*}
Using \eqref{pbar:ineq1} to estimate $\|\bar p\|_{L^\alpha(U\times(0,t))}$ and the fact that $Env A(\alpha,t)$ is increasing in $t$, we obtain \eqref{e2}.   

For $t\ge 1$, applying \eqref{e1}  with $T_0=t-1$, $T=1$ and $\theta=1/2$ we obtain 
\beq\label{pinf}
\norm{p(t)}_{L^\infty(U')}\le C\Big(  1 +  \|p\|_{L^\alpha( U\times (t-1,t) )}^{\kappa_2}\Big)
\le C\Big(  1 +  \|\bar p\|_{L^\alpha( U\times (t-1,t) )}+  \|\Psi\|_{L^\alpha( U\times (t-1,t) )}\Big)^{\kappa_2}. 
\eeq
Again, using inequality \eqref{pbar:ineq1}  to estimate $\|\bar p\|_{L^\alpha(U\times(t-1,t))}$, we obtain \eqref{pNsmall}.   
 
(ii) From \eqref{pinf} we have
\beqs
\limsup_{t\to\infty} \norm{p(t)}_{L^\infty(U')}\le C\Big( 1 +  \limsup_{t\to\infty} \|\bar p\|_{L^\alpha( U\times (t-1,t) )}
+  \limsup_{t\to\infty} \|\Psi\|_{L^\alpha( U\times (t-1,t) )}\Big)^{\kappa_2}. 
\eeqs
By \eqref{limsupPbar},    
\beqs
\limsup_{t\to\infty}  \| \bar p\|_{L^\alpha( U\times (t-1,t) )}^\alpha \le \limsup_{t\to\infty} \int_U |\bar p(x,t)|^\alpha dx\le  C(1+A(\alpha))^{\alpha/(\alpha -a)}.
\eeqs
Thus \eqref{limsupPbarI} follows. 

(iii) By \eqref{pbar:ineq2}, we have for large $t$ that
 \beq\label{fix1}
 \int_{t-1}^t\int_U|\bar{p}(x,\tau)|^\alpha dx d\tau\le C\big (1+\beta(\alpha)^\frac {\alpha}{\alpha-2a} 
 + \int_{t-1}^t A(\alpha,\tau)^\frac{\alpha}{\alpha-a}d\tau).
 \eeq
From \eqref{pinf} and \eqref{fix1}, we obtain \eqref{PbarI}.  
\end{proof}

It is worthy to mention that the quantities on the right-hand sides of estimates \eqref{limsupPbarI} and \eqref{PbarI} only depend on the boundary data's large time behavior.
This feature will be re-established in many of our estimates below for the gradient, time derivative and Hessian of the pressure.

%
%
%

\myclearpage
\section{Interior estimates for pressure gradient}\label{GradSec}

We will first estimate the pressure gradient in $L^s$-norm (for $s<\infty$) in subsection \ref{subgrad1}, and then in $L^\infty$-norm in subsection \ref{subgrad2}.

\subsection{ $L^s$-estimates}\label{subgrad1}
The following imbedding lemma from \cite{HKP1} is a suitable extension of Lemma 5.4 on page 93 in \cite{LadyParaBook68}.

\begin{lemma}[cf. \cite{HKP1}, Lemma 3.2]\label{LUK} For each $s\geq1$, there exists a constant $C>0$ depending on $s$ such that  for each smooth cut-off function 
$\zeta(x) \in C_c^\infty(U)$, the following inequality holds 
\begin{equation*}
\begin{aligned}
\int_U K(|\nabla p|) |\nabla p|^{2s+2} \zeta^2 dx
& \le  C \max_{\text{\rm supp} \zeta } |p|^2  \Big [ \int_U K(|\nabla p|)|\nabla p|^{2s-2} |\nabla^2 p|^2 \zeta^2 dx \\
 &\quad \quad \quad +  \int_U K(|\nabla p|)|\nabla p|^{2s}  |\nabla \zeta|^2 dx \Big],
\end{aligned} 
\end{equation*}
for every sufficiently regular function $p(x)$ such that the right hand side is well-defined.
\end{lemma}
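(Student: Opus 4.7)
The plan is to prove the inequality by an integration-by-parts trick that peels one derivative off the factor $|\nabla p|^2$ in the integrand, producing a term with the Hessian $\nabla^2 p$ and a term with $\nabla \zeta$, and then to absorb one copy of the left-hand side by Young's inequality. Set
\[ I = \int_U K(|\nabla p|)\,|\nabla p|^{2s+2}\zeta^2\,dx = \sum_{j=1}^n \int_U K(|\nabla p|)\,|\nabla p|^{2s}(\partial_j p)(\partial_j p)\zeta^2\,dx. \]
For each $j$ I would write one factor $\partial_j p$ as the derivative of $p$ and integrate by parts (no boundary terms since $\zeta\in C_c^\infty(U)$), giving
\[ I = -\sum_{j=1}^n \int_U p\,\partial_j\!\bigl[K(|\nabla p|)\,|\nabla p|^{2s}(\partial_j p)\bigr]\zeta^2\,dx \;-\; \sum_{j=1}^n \int_U p\, K(|\nabla p|)\,|\nabla p|^{2s}(\partial_j p)\,\partial_j(\zeta^2)\,dx. \]

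\textbf{Boundary-cutoff term.} For the second sum, bound $|p|\le \max_{\operatorname{supp}\zeta}|p|$ and $\partial_j(\zeta^2)=2\zeta\partial_j\zeta$, then Cauchy--Schwarz gives
\[ \Bigl|\sum_j\int p\,K|\nabla p|^{2s}(\partial_j p)\,2\zeta\,\partial_j\zeta\,dx\Bigr| \le C\,\max_{\operatorname{supp}\zeta}|p|\, I^{1/2}\Bigl(\int_U K(|\nabla p|)|\nabla p|^{2s}|\nabla\zeta|^2\,dx\Bigr)^{1/2}, \]
so by Young's inequality it is dominated by $\tfrac14 I + C(\max|p|)^2\int K|\nabla p|^{2s}|\nabla\zeta|^2\,dx$.

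\textbf{Hessian term.} In the first sum, compute the chain-rule derivative of the vector field $F_i(\xi)=K(|\xi|)|\xi|^{2s}\xi_i$ evaluated at $\xi=\nabla p$. The three contributions are proportional to $K'(|\nabla p|)|\nabla p|^{2s-1}\xi_i\xi_k$, $K(|\nabla p|)|\nabla p|^{2s-2}\xi_i\xi_k$, and $K(|\nabla p|)|\nabla p|^{2s}\delta_{ik}$, each multiplied by $\partial_{jk} p$ (or $\partial_{ij} p$). Using the structural bound \eqref{K-est-2} in the form $|K'(|\nabla p|)|\,|\nabla p|\le a\,K(|\nabla p|)$, the first contribution is absorbed into the second, and altogether
\[ \Bigl|\partial_j\bigl[K(|\nabla p|)|\nabla p|^{2s}\partial_j p\bigr]\Bigr| \le C_s\,K(|\nabla p|)\,|\nabla p|^{2s}\,|\nabla^2 p|. \]
Substituting and splitting $K|\nabla p|^{2s}|\nabla^2 p|=\bigl(K^{1/2}|\nabla p|^{s+1}\bigr)\cdot\bigl(K^{1/2}|\nabla p|^{s-1}|\nabla^2 p|\bigr)$ symmetrically, Cauchy--Schwarz and Young yield
\[ \Bigl|\sum_j\int p\,\partial_j[\cdots]\zeta^2\,dx\Bigr| \le \tfrac14 I + C(\max_{\operatorname{supp}\zeta}|p|)^2\int_U K(|\nabla p|)|\nabla p|^{2s-2}|\nabla^2 p|^2\zeta^2\,dx. \]

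Combining both estimates gives $I\le \tfrac12 I + (\text{right-hand side of the lemma})$, and absorbing $\tfrac12 I$ to the left finishes the proof. The main obstacle, and really the only point where the nonlinear structure enters, is bounding $\partial_j[K(|\nabla p|)|\nabla p|^{2s}\partial_j p]$ by $K(|\nabla p|)|\nabla p|^{2s}|\nabla^2 p|$ up to a constant; without \eqref{K-est-2} the $K'$-term would carry the ``wrong'' weight and could not be controlled by the Hessian integrand on the right-hand side. The requirement $s\ge 1$ is used exactly so that the factor $|\nabla p|^{2s-2}$ appearing after the symmetric split is non-singular.
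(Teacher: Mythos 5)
Your proof is correct. The integration-by-parts decomposition, the pointwise bound $|\nabla\cdot(K(|\nabla p|)|\nabla p|^{2s}\nabla p)|\le C_s\,K(|\nabla p|)|\nabla p|^{2s}|\nabla^2 p|$ via the structural inequality \eqref{K-est-2}, the symmetric Cauchy--Schwarz split, and the Young absorption of two copies of $\tfrac14 I$ all check out. The paper does not reproduce a proof here (it cites \cite{HKP1}, Lemma 3.2, which in turn extends Lemma 5.4 of \cite{LadyParaBook68}), but your argument is exactly the standard Ladyzhenskaya--Uraltseva mechanism that the cited lemma is built on, so it is the same approach.
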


We establish the basic step for the Ladyzhenskaya-Uraltseva iteration.

\begin{lemma} \label{new.it-L} For each $s\geq 0$, if $T_0\ge 0$, $T> 0$, and $\zeta(x,t)$ is a smooth function  with $\zeta(\cdot,T_0)=0$ and $\supp \zeta(\cdot,t)\subset U$ for all $t\in[T_0,T_0+T]$, then
\begin{multline}\label{eLs2}
  \sup_{[T_0,T_0+T]} \int_U |\nabla p(x, t)|^{2s+2} \zeta^2 dx
+\int_{T_0}^{T_0+T}\int_U K(|\nabla p|) |\nabla^2 p|^2   |\nabla p|^{2s}  \zeta^2 dx dt \\
\le   C \int_{T_0}^{T_0+T} \int_U K(|\nabla p|)  |\nabla p|^{2s+2}|\nabla \zeta|^2 dx dt+C\int_{T_0}^{T_0+T} \int_U |\nabla p|^{2s+2}\zeta |\zeta_t| dx.
\end{multline}
 \end{lemma}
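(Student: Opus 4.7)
The plan is to derive a pointwise-in-$t$ energy inequality by differentiating \eqref{lin-p} in $x_m$, testing the resulting equation for $\partial_m p$ against $(\partial_m p)\zeta^2|\nabla p|^{2s}$, and summing over $m$. Writing $u_{jk}=\partial_j\partial_k p$ and using the summation convention in $l$ and $j$, differentiation of \eqref{lin-p} yields
\begin{equation*}
\partial_t(\partial_m p)=\partial_j\!\left(\frac{K'(|\nabla p|)}{|\nabla p|}(\partial_l p)\,u_{ml}\,(\partial_j p)+K(|\nabla p|)\,u_{mj}\right).
\end{equation*}
After multiplying by $(\partial_m p)\zeta^2|\nabla p|^{2s}$, integrating over $U$, summing in $m$, and integrating by parts in $x_j$ (no boundary terms, since $\zeta(\cdot,t)$ is compactly supported in $U$), the time derivative produces $\frac{1}{2(s+1)}\frac{d}{dt}\!\int_U|\nabla p|^{2s+2}\zeta^2\,dx-\frac{1}{s+1}\!\int_U|\nabla p|^{2s+2}\zeta\zeta_t\,dx$.

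Expanding $\partial_j\bigl((\partial_m p)\zeta^2|\nabla p|^{2s}\bigr)$ via the product rule yields three families of terms on the right. In the \emph{Hessian family}, where $\partial_j$ hits $\partial_m p$, one obtains the principal dissipation $-\!\int K|\nabla^2 p|^2\zeta^2|\nabla p|^{2s}\,dx$ together with a term of the form $-\!\int\!(K'/|\nabla p|)\bigl(\sum_l(\partial_l p)u_{ml}\bigr)^2\zeta^2|\nabla p|^{2s}\,dx$, which has the correct sign because $K'\le 0$ from \eqref{K-est-2}. Combining them via the Kato-type inequality $\bigl|\nabla|\nabla p|\bigr|\le|\nabla^2 p|$ and the sharper bound $|K'(\xi)|\xi\le aK(\xi)$ with $a\in(0,1)$ from \eqref{K-est-2}, the contribution of this family, once moved to the left-hand side, is a strictly positive multiple $(1-a)\int K|\nabla^2 p|^2\zeta^2|\nabla p|^{2s}\,dx$. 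The \emph{weight family}, from $\partial_j|\nabla p|^{2s}=2s|\nabla p|^{2s-2}(\partial_n p)u_{nj}$, produces analogous terms carrying an extra factor $2s|\nabla p|^{2s-2}$, and the same one-sided bound on $K'$ again leaves them of the correct sign. Finally, the \emph{cut-off family}, from $\partial_j\zeta^2=2\zeta\,\partial_j\zeta$, is estimated by Cauchy--Schwarz/Young against the two preceding good families, leaving a residue controlled by $C\!\int_U K|\nabla p|^{2s+2}|\nabla\zeta|^2\,dx$.

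Assembling these estimates yields the pointwise-in-$t$ differential inequality
\begin{equation*}
\frac{d}{dt}\!\int_U|\nabla p|^{2s+2}\zeta^2\,dx+c\!\int_U K|\nabla^2 p|^2|\nabla p|^{2s}\zeta^2\,dx\le C\!\int_U K|\nabla p|^{2s+2}|\nabla\zeta|^2\,dx+C\!\int_U|\nabla p|^{2s+2}\zeta|\zeta_t|\,dx.
\end{equation*}
Integrating in time from $T_0$ to an arbitrary $t\in[T_0,T_0+T]$—the initial value at $T_0$ vanishes because $\zeta(\cdot,T_0)=0$—and then taking the supremum over $t$ on the $|\nabla p|^{2s+2}\zeta^2$ term gives \eqref{eLs2}. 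The main obstacle is the careful bookkeeping of all the terms produced simultaneously by the three products under integration by parts, and in particular verifying that the coefficient of the good Hessian dissipation stays strictly positive after the $K'$-contributions from the Hessian and weight families are absorbed. This is made possible precisely by the two-sided bound $-aK(\xi)\le K'(\xi)\xi\le 0$ with $a<1$ in \eqref{K-est-2}; the one-sided bound $K'\le 0$ alone would not suffice.
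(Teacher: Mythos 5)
Your proof is essentially the same as the paper's, presented in a mildly different form. The paper multiplies \eqref{p:eq} by the test function $-\nabla\cdot(|\nabla p|^{2s}\zeta^2\nabla p)$ and integrates by parts; you differentiate the equation in $x_m$, test against $(\partial_m p)\zeta^2|\nabla p|^{2s}$, and sum over $m$. After one integration by parts these are the same computation, and both lead to the identical pointwise-in-$t$ differential inequality (the paper's \eqref{gradid}, with terms $I_1$--$I_4$ matching your Hessian, cut-off, weight, and time-derivative families), which is then integrated in time. One small imprecision: you say the weight family is handled by ``the same one-sided bound on $K'$,'' but in fact, exactly as for the Hessian family, the two-sided bound $-aK(\xi)\le K'(\xi)\xi\le 0$ (or at least $|K'(\xi)|\xi\le K(\xi)$) is needed there as well, since $K'\le 0$ alone only shows the $K'$-contribution is nonpositive, not that the whole quantity stays nonnegative after it is combined with the $K|v|^2$ piece. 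Your closing remark already acknowledges that the two-sided bound is the essential ingredient, so the overall logic is correct.
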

\begin{proof} Without loss of generality, assume $T_0=0$. 
Multiplying the equation \eqref{p:eq} by 
$-\nabla \cdot (|\nabla p|^{2s} \zeta^2 \nabla p )$, integrating the resultant over $U$ and using integration by parts, we obtain
\begin{align*}
 \frac{1}{2s+2} \ddt \int_U |\nabla p|^{2s+2} \zeta^2 dx
= -\int_U \partial_j (K(|\nabla p|)\partial_i p) \, \partial_i (|\nabla p|^{2s} \partial_j p \zeta^2) dx +\frac 1{s+1} \int_U |\nabla p|^{2s+2}\zeta \zeta_t dx .
\end{align*}
Calculated as in Lemma 3.1 of \cite {HKP1} above equation is rewritten as     
\begin{align*}
& \frac{1}{2s+2} \ddt \int_U |\nabla p|^{2s+2} \zeta^2 dx
= -\int_U \Big[\partial_{y_l} (K(|y|) y_i)\Big|_{y=\nabla p} \partial_j\partial_l p\Big]\partial_j \partial_i p  |\nabla p|^{2s}  \zeta^2 dx\\
&\quad -2\int_U\Big[ \partial_{y_l} (K(|y|) y_i)\Big|_{y=\nabla p} \partial_j\partial_l p \Big] \partial_j p \, |\nabla p|^{2s} \, \zeta \partial_i  \zeta dx\\
&\quad -2s\int_U \Big[\partial_{y_l} (K(|y|) y_i)\Big|_{y=\nabla p} \partial_j\partial_l p \Big]\partial_j p   \, (|\nabla p|^{2s-2} \partial_i\partial_m p\partial_m p)\,     \zeta^2 dx+\frac 1{s+1} \int_U |\nabla p|^{2s+2}\zeta \zeta_t dx.
\end{align*}	
We denote the four terms  on the right-hand side by $I_1$, $I_2, I_3$ and $I_4$. 
It follows from the calculations in Lemma 3.1 of \cite {HKP1} that  
\begin{align*}
 I_1&\le -(1-a)\sum_j \int_U K(|\nabla p|) |\nabla(\partial_j p)|^2   |\nabla p|^{2s}  \zeta^2 dx,\\
 |I_2|&\le 2(1+a)\int_U K(|\nabla p|) |\nabla^2 p| |\nabla p|^{2s+1} \zeta  |\nabla \zeta| dx, 
\end{align*}
and
\begin{align*}
 I_3\le  -2(1-a)s\int_U K(| \nabla p|)\Big | \nabla \Big(\frac12|\nabla p|^2\Big)\Big |^2  |\nabla p|^{2s-2}   \zeta^2 dx\le 0.
\end{align*}
Combining these estimates with Young's inequality, we find that 
\beq\label{gradid}
\begin{aligned}
 & \frac{1}{2s+2} \ddt \int_U |\nabla p|^{2s+2} \zeta^2 dx
+\frac{1-a}{2}  \int_U K(|\nabla p|) |\nabla^2 p|^2   |\nabla p|^{2s}  \zeta^2 dx\\
&\quad\le  C \int_U K(|\nabla p|) |\nabla p|^{2s+2}|\nabla \zeta|^2 dx+\frac 1{s+1} \int_U |\nabla p|^{2s+2}\zeta |\zeta_t| dx.
\end{aligned}
\eeq
Inequality \eqref{eLs2} follows directly by integrating \eqref{gradid} from $0$ to $T$.
\end{proof}

Next, we reduce estimates for $W^{1,s}$-norm, with large $s$, down to $W^{1,2-a}$ and $L^\infty$ norms. 

\begin{proposition}\label{Prop53}
Let $U'\Subset V\Subset U$, $T_0\ge 0$, $T>0$ and $\theta\in (0,1)$.
If $s\ge  2$ then
 \begin{multline}\label{new0}
\int_{T_0+\theta T}^{T_0+T} \int_{U'} K(|\nabla p|) |\nabla p|^{s} dx dt 
\le C\Big(1+(\theta T)^{-1}\Big)^{s-2} \big(1+\sup_{[T_0+\theta T/2,T_0+T]} \| p\|^2_{L^\infty(V)} \big)^{s-2} \\
\cdot\int_{T_0}^{T_0+T}  \int_{U} (1+K(|\nabla p|) |\nabla p|^2) dxdt,
\end{multline}
and
\begin{multline}\label{new1}
\sup_{t\in [T_0+\theta T,T_0+T]}\int_{U'}|\nabla p(x,t)|^s dx dt
 \le C\Big(1+(\theta T)^{-1}\Big)^{s+a-1}\big(1+\sup_{[T_0+\theta T/2,T_0+T]} \|p\|^2_{L^\infty(V)} \big)^{s-2+a}\\
  \cdot\int_{T_0}^{T_0+T} \int_{U} (1+K(|\nabla p|) |\nabla p|^2) dxdt,
\end{multline}
where constant $C>0$ is independent of $T_0$, $T$, and $\theta$.
\end{proposition}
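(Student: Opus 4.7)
The plan is a Ladyzhenskaya--Uraltseva iteration coupling Lemmas \ref{LUK} and \ref{new.it-L}. Writing $M^2:=\sup_{[T_0+\theta T/2,T_0+T]}\|p\|^2_{L^\infty(V)}$, I set up a finite nested chain of cylinders interpolating between $U'\times[T_0+\theta T,T_0+T]$ (innermost) and $V\times[T_0+\theta T/2,T_0+T]$ (outermost), together with smooth cut-offs $\zeta_i$ that equal $1$ on the $(i+1)$-st cylinder and vanish outside the $i$-th, satisfying $|\nabla\zeta_i|+|\partial_t\zeta_i|\le C(1+(\theta T)^{-1})$. I would prove \eqref{new0} by induction on even integer exponents $s=2,4,6,\ldots$, the case $s=2$ being the trivial monotonicity of the integral in its domain of integration, and then recover real $s\ge 2$ at the end by H\"older/convexity interpolation between consecutive even integers.

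For the inductive step from $s=2\sigma$ to $s=2\sigma+2$, I apply Lemma \ref{LUK} with parameter $\sigma$ pointwise in $t$ to the cut-off $\zeta_{i+1}$, bound $\max_{\supp\zeta_{i+1}(\cdot,t)}|p|^2\le M^2$, and integrate in $t$ to produce
\begin{equation*}
\iint K|\nabla p|^{2\sigma+2}\zeta_{i+1}^2\,dx\,dt\le CM^2\Big\{\iint K|\nabla p|^{2\sigma-2}|\nabla^2 p|^2\zeta_{i+1}^2+\iint K|\nabla p|^{2\sigma}|\nabla\zeta_{i+1}|^2\Big\}.
\end{equation*}
I then invoke Lemma \ref{new.it-L} with parameter $\sigma-1$ and the enveloping cut-off $\zeta_i\ge\zeta_{i+1}$ to dominate the Hessian integral by $\iint K|\nabla p|^{2\sigma}|\nabla\zeta_i|^2+\iint|\nabla p|^{2\sigma}\zeta_i|\partial_t\zeta_i|$. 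The bare $|\nabla p|^{2\sigma}$ arising in the $\zeta_t$-term is converted via \eqref{K-est-3} (together with Young's inequality, using $a<2$) into $C+\epsilon K|\nabla p|^{2\sigma+2}+C_\epsilon K|\nabla p|^{2\sigma}$; the top-order $\epsilon$-term is absorbed across the chain of nested cut-offs by a Widman-type hole-filling iteration, and the remaining $K|\nabla p|^{2\sigma}$-term is then controlled by the inductive hypothesis, closing the step. For \eqref{new1}, one final application of Lemma \ref{new.it-L} with parameter $(s-2)/2$ on a cut-off adapted to $V\times[T_0+\theta T/2,T_0+T]$ and equal to $1$ on $U'\times[T_0+\theta T,T_0+T]$ yields $\sup_{t\in[T_0+\theta T,T_0+T]}\int_{U'}|\nabla p|^s\,dx$ on its left-hand side, while the right-hand-side space-time integrals are controlled by \eqref{new0} at exponents $s$ and $s+a$ respectively, the latter again using \eqref{K-est-3} to pass from $|\nabla p|^s$ to $C K|\nabla p|^s+CK|\nabla p|^{s+a}$; this is precisely the source of the extra $+a$ in the exponents appearing in \eqref{new1}.

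The principal obstacle will be twofold. First, the bookkeeping must arrange the chain of cut-offs so that the roughly $(s-2)/2$ iterated factors of $M^2$ and $(1+(\theta T)^{-1})$ telescope into exactly the powers stated in \eqref{new0}--\eqref{new1}, without introducing a constant $C=C(s)$ that blows up with $s$. Second, and more delicate, is the absorption of the top-order $K|\nabla p|^{2\sigma+2}$-term produced by the $\zeta_t$-correction: the Widman hole-filling must be calibrated so that the absorption constant (which itself depends on $M$ and $\theta T$) does not contaminate the stated $M^2$- and $(1+(\theta T)^{-1})$-exponents; this calibration is the most technical part of the argument.
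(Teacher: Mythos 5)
Your overall strategy --- couple Lemmas~\ref{LUK} and~\ref{new.it-L}, iterate on the exponent across a nested family of parabolic cylinders, and interpolate for non-integer $s$ --- is the same as the paper's, and the two Lemmas are combined in the same way. However, the crucial iteration step is handled differently, and the paper's version is cleaner.

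Starting from the combined inequality
\begin{equation*}
\iint K|\nabla p|^{2s+4}\zeta^2\,dx\,dt\le CM^2\Big[\iint K|\nabla p|^{2s+2}|\nabla\zeta|^2 + \iint |\nabla p|^{2s+2}\,\zeta|\zeta_t|\Big],
\end{equation*}
you treat the bare $|\nabla p|^{2s+2}$ term from the $\zeta_t$-correction by Young's inequality, producing a top-order $\epsilon K|\nabla p|^{2s+4}$ that must be absorbed by Widman-type hole-filling; this allows you to drop the right-hand exponent by a full $2$ per step. The paper instead invokes the elementary estimate \eqref{Ks}, namely $|\nabla p|^{2s+2}\le C(1+K|\nabla p|^{2s+2+a})$ and $K|\nabla p|^{2s+2}\le C(1+K|\nabla p|^{2s+2+a})$, so that both right-hand-side terms are of the form $1+K|\nabla p|^{\,(2s+4)-(2-a)}$. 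The exponent thus drops by $2-a<2$ per iterate, but \emph{no top-order term appears and no hole-filling is needed}. The paper then reduces $s>4$ to the landing zone $(2+a,4]$ in finitely many steps (Step~4), handles $s=4$ once and for all by a direct absorption (Step~2 --- there the Cauchy--Schwarz factorization $K^{1/2}|\nabla p|^2\zeta\cdot K^{-1/2}\zeta_t$ places the absorption term with the \emph{same} weight $\zeta^2$ as the left-hand side, so it is a trivial absorption rather than a hole-filling), and finally interpolates for $s\in(2,4)$ (Step~3).

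Your route is not wrong, but it is heavier: the $\epsilon$-term produced by Young's inequality carries weight $\zeta|\zeta_t|$, not $\zeta^2$, so a genuine Giaquinta-type iteration over a continuum of cylinders in both space and time is required; calibrating $\epsilon$ forces $\epsilon\sim(\tau'-\tau)/M^2$, which feeds back into the Young complement $C_\epsilon\sim(M^2/(\tau'-\tau))^{a/(2-a)}$ and hence into the iteration constants. Tracking these carefully one finds each power step contributes a factor of order $\big((1+M^2)(1+(\theta T)^{-1})\big)^{2/(2-a)}$, yielding the overall exponent $\tfrac{s-2}{2-a}$, which is $\le s-2$ and hence compatible with \eqref{new0}; so the calibration issue you flag is resolvable. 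Your other worry, about $C=C(s)$ blowing up, is moot: the paper's convention explicitly allows $C$ to depend on $s$. Your account of \eqref{new1} --- apply Lemma~\ref{new.it-L} once more, bound $|\nabla p|^s$ by $C(1+K|\nabla p|^{s+a})$, and invoke \eqref{new0} at exponent $s+a$ --- matches the paper's derivation of \eqref{new4}--\eqref{new1} (the paper only needs exponent $s+a$ there, not both $s$ and $s+a$). In summary: same lemmas and same interpolation endgame, but your inductive step trades the paper's exponent-shift device \eqref{Ks} for a genuinely more delicate hole-filling.
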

\begin{proof} 
Without loss of generality, assume $T_0=0$.
First, we prove a more general version of \eqref{new0}.

\textit{Claim.} For $0<\theta'<\theta<1$ we have
 \begin{multline}\label{new2}
\int_{\theta T}^{T} \int_{U'} K(|\nabla p|) |\nabla p|^{s} dx dt 
\le C\Big(1+[(\theta-\theta') T]^{-1}\Big)^{s-2} \big(1+\sup_{[\theta' T,T]} \| p\|^2_{L^\infty(V)} \big)^{s-2} \\
\cdot\int_{0}^{T}  \int_{U} (1+K(|\nabla p|) |\nabla p|^2) dxdt,
\end{multline}
where constant $C>0$ is independent of $T$, $\theta$, and $\theta'$.

Note that \eqref{new2} holds trivially when $s=2$. Hence, we will only prove \eqref{new2} for $s>2$ in a number of steps. 

\textbf{Step 1.} Let $\zeta(x,t)$ be the cut-off function with $\zeta(x,t)=0$ for $(x,t)\not\in V\times (\theta' T,T]$ and $\zeta(x,t)=1$ for $(x,t)\in U'\times (\theta T,T]$. By applying Lemma \ref{LUK}  with $s+1$ in place of $s$, we have for $s\ge 0$ that
\begin{align*} 
\int_0^T \int_U K(|\nabla p|) |\nabla p|^{2s+4} \zeta^2 dx dt 
\le  C \max_{ \rm{supp}\zeta } |p|^2 & \Big[ \int_0^T\int_U K(|\nabla p|)|\nabla p|^{2s} |\nabla^2 p|^2 \zeta^2 dx dt \\
& +  \int_0^T \int_U K(|\nabla p|)|\nabla p|^{2s+2}  |\nabla \zeta|^2 dx dt \Big].
\end{align*}
Let $N_0=\sup_{V\times[\theta' T,T]}|p|^2$. Using \eqref{eLs2} to estimate the first integral on the right-hand side, we find that      
\beq\label{ladyural4}
\begin{aligned} 
&\int_0^T \int_U K(|\nabla p|) |\nabla p|^{2s+4} \zeta^2 dx dt \\
&\quad \le  CN_0\Big [ \int_0^T \int_U K(|\nabla p|)  |\nabla p|^{2s+2}|\nabla \zeta|^2 dx dt+\int_0^T \int_U |\nabla p|^{2s+2}\zeta |\zeta_t| dx \Big].
\end{aligned}
 \eeq 
By the boundedness and property \eqref{K-est-3} of function $K(\xi)$, 
 \begin{multline}\label{Ks}
K(|\nabla p|)  |\nabla p|^{2s+2}\le C  |\nabla p|^{2s+2} \le C K(|\nabla p|)(1+|\nabla p|^a)  |\nabla p|^{2s+2} \\
 \le  CK(|\nabla p|)(1+  |\nabla p|^{2s+2+a})\le C(1+K(|\nabla p|)  |\nabla p|^{2s+2+a}).
 \end{multline}
Hence, inequality \eqref{ladyural4} leads to  
 \beq\label{Kiter}
\int_0^T \int_{U} K(|\nabla p|) |\nabla p|^{2s+4} \zeta^2 dx dt
 \le  C N_0\int_0^T \int_{U} (1+ K(|\nabla p|)  |\nabla p|^{2s+2+a}) (|\nabla \zeta|^2 +\zeta|\zeta_t|) dx dt.
\eeq
%


{\bf Step 2.} Let $s=0$ in \eqref{ladyural4}. For $\varep>0$, applying Cauchy's inequality to the last integral of \eqref{ladyural4} gives
\begin{multline}\label{Kgrad3}
 \int_0^T \int_{U} K(|\nabla p|) |\nabla p|^4 \zeta^2 dx
 \le \varep N_0\int_0^T \int_{U}  K(|\nabla p|)|\nabla p|^4 \zeta^2 dx
           + C\varep^{-1} N_0  \int_0^T \int_{U} K(|\nabla p|)^{-1} \zeta_t^2 dx dt \\
+C N_0 \int_0^T \int_{U} K(|\nabla p|) |\nabla p|^2 |\nabla \zeta|^2  dx dt\Big].
\end{multline}
Here, the cut-off function $\zeta$  satisfies $|\nabla \zeta|\le C$ and $0\le \zeta_t\le C[(\theta-\theta')T]^{-1}$.
Taking $\varep=(2(N_0+1))^{-1}$ in \eqref{Kgrad3} and using Young's inequality yield 
\begin{align*}
  \int_{\theta T}^T\int_{U'} K(|\nabla p|)|\nabla p|^4 dxdt  
& \le C(N_0+1)N_0((\theta-\theta') T)^{-2} \int_{\theta' T}^T\int_{U}  (1+|\nabla p|)^{a}  dx dt \\
&\quad + C N_0 \int_{\theta' T}^T \int_{U} K(|\nabla p|) |\nabla p|^2 dx dt\\
& \le C(1+N_0)^2\Big(1+\frac 1{(\theta-\theta') T}\Big)^2 \int_{\theta' T}^T\int_{U}  (1+K(|\nabla p|) |\nabla p|^2)   dx dt  .
\end{align*}
This implies \eqref{new2} when $s =4$. 

{\bf Step 3.}  When $s \in (2,4)$, let $\beta$ be a number in $(0,1)$ such that 
$\frac 1 {s} =\frac{1-\beta}{2}+\frac\beta {4}$. 
Then, by interpolation inequality:  
\begin{align*}
\Big(\int_{\theta T}^T\int_{U'} K(|\nabla p|)|\nabla p|^{s} dx dt\Big)^{\frac 1 {s}} &\le \Big(\int_{\theta T}^T\int_{U} K(|\nabla p|) |\nabla p|^{2} dx dt\Big)^\frac{1-\beta}{2} \Big(\int_{\theta T}^T\int_{U'} K(|\nabla p|) |\nabla p|^{4}dx dt\Big)^{\frac \beta {4}}.
\end{align*}
Note that $\beta s/4=s/2-1$. Using \eqref{Kgrad3} to estimate the last double integral, we obtain
\beq\label{Kgrad24}
\int_{\theta T}^T\int_{U'} K(|\nabla p|)|\nabla p|^{s} dx dt
\le C \Big(1+\frac{1}{(\theta-\theta') T}\Big)^{s-2}(1+N_0)^{s-2}  \int_{\theta' T}^T \int_{U}(1+ K(|\nabla p|) |\nabla p|^2) dx.
\eeq
This implies \eqref{new2} for $s\in(2,4)$. Therefore, we have proved \eqref{new2} for $s \in (2, 4]$.

{\bf Step 4.} When $s>4$, let $m$ be the positive integer that satisfies $\frac {s-4}{2-a}\le m<\frac {s-4}{2-a}+1$, then $s- m(2-a)\in (2+a,4] $.  
Then, let $\{U_k\}_{k=0}^m$ and $\{V_k\}_{k=0}^m$ be two families of open subsets of $U$ such that  
 \beqs
 U'=U_0\Subset V_0\Subset U_1\Subset V_1\Subset U_2\Subset  V_2\Subset \ldots \Subset U_{m-1} \Subset V_{m-1}\Subset U_m \Subset V_m= V\Subset U.
 \eeqs
For each $k =0,1,\ldots, m+1$, let $\theta_k=\theta-(\theta-\theta')k/(m+1)$. 
Note that $\theta=\theta_0>\theta_1>\theta_2>\ldots>\theta_m>\theta_{m+1}=\theta'$ and $\theta_k-\theta_{k+1}=(\theta-\theta')/(m+1)$.
For $1\le k\le m$, let $\zeta_k(x,t)$ be a smooth cut-off function 
 which is equal to one on $U_k\times(t_k,T] $ and zero on $Q_T\setminus V_k\times(t_{k-1},T]$, and satisfies
 $ |\nabla \zeta_k| \le C_m$ and $0\le \zeta_{k,t}\le  C_m[(\theta-\theta') T]^{-1}$.
 Here and in the following calculations in this proof,  $C_m$ denotes a generic positive constant depending on all $U_k$ and $V_k$, $k=1,2,\ldots,m$. 

We denote $N_0=\sup_{V\times[\theta' T,T]}|p|^2$ still.
Applying  \eqref{Kiter} to $V_0$, $U_1$, $\theta_1$ in place of $V$, $U$, $\theta$, we have 
\begin{align*}
\int_{\theta T}^T\int_{U'} K(|\nabla p|) |\nabla p|^{s} dx dt 
&\le C_m (1+[(\theta-\theta') T]^{-1}) N_0 \int_{\theta_1 T}^T \int_{U_1} (1+K(|\nabla p|) |\nabla p|^{s-(2-a)}) dx dt \\
&= C_m (1+[(\theta-\theta') T]^{-1})N_0\Big[ T+  \int_{\theta_1 T}^T\int_{U_1}K(|\nabla p|) |\nabla p|^{s-(2-a)} dx dt\Big].
\end{align*}
Above, we used the fact that $\sup_{V_k\times [\theta_{k-1} T,T]}|p|^2\le N_0$ for $1\le k\le m$.
Applying  \eqref{Kiter} recursively to  $U_k\Subset V_k \Subset U_{k+1}$ and $\theta_k>\theta_{k+1}$, we obtain   
\begin{align*}
\int_{\theta T}^T\int_{U'} K(|\nabla p|) |\nabla p|^{s} dx dt 
& \le  C_mT\sum_{i=1}^m  d^{i} +C d^m\int_{\theta_m T}^T \int_{U_m} K(|\nabla p|) |\nabla p|^{s-m(2-a)} dx dt\\
& \le C_m d^m \Big[ T+\int_{\theta_m T}^T \int_{U_m} K(|\nabla p|) |\nabla p|^{s-m(2-a)} dx dt\Big],
\end{align*}
where $d=(1+[(\theta-\theta') T]^{-1})(1+N_0)$.
Since $s-m(2-a)\in(2,4]$, estimating the last integral by \eqref{Kgrad24} gives
\beq\label{grad100}
\int_{\theta T}^T \int_{U'} K(|\nabla p|) |\nabla p|^{s} dx dt 
\le Cd^m T +Cd^{s-m(2-a)-2+m} \int_{\theta' T}^T \int_{U} K(|\nabla p|) |\nabla p|^2 dx.
\eeq
Since $d\ge 1$ and $m,s-m(2-a)-2+m \le  s-2$, the desired estimate \eqref{new2} follows \eqref{grad100}. 
This completes the proof of \eqref{new2} for all $s\ge 2$.

The inequality \eqref{new0} then is an obvious consequence of \eqref{new2} with $\theta'=\theta/2$.
  
Now, we turn to the proof of \eqref{new1}. For $s>2-a$, it follows from \eqref{eLs2} that 
\begin{align*}
\sup_{[0,T]} \int_U |\nabla p|^s \zeta^2 dx
&\le   C \int_0^T \int_U K(|\nabla p|)  |\nabla p|^s|\nabla \zeta|^2 dx dt+C\int_0^T \int_U |\nabla p|^s\zeta |\zeta_t| dx\\
&\le   C \int_0^T \int_U |\nabla p|^s( |\nabla \zeta|^2 + \zeta |\zeta_t|) dxdt.
\end{align*}
Same as \eqref{Ks},  we have $|\nabla p|^s \le  C (1+K(|\nabla p|) |\nabla p|^{s+a})$, and hence
\beq\label{new3}
\sup_{[0,T]} \int_U |\nabla p|^s \zeta^2 dx
\le  C\int_0^T \int_U (1+K(|\nabla p|)|\nabla p|^{s+a})(|\nabla \zeta|^2+\zeta |\zeta_t|) dx.
\eeq 
Let $V'$ be an open  set such that $U'\Subset V'\Subset V$. Choose $\zeta(x,t)$ such that $\zeta=0$ for $t\le 3\theta T/4$ or $x\not\in V'$, and $\zeta =1$ for $(x,t)\in U'\times [\theta T,T]$.
Then we have from \eqref{new3} that
\beq\label{new4}
\sup_{[\theta T,T]} \int_U |\nabla p|^s  dx
\le   C (1+(\theta T)^{-1})\int_{3\theta T/4}^T \int_{V'} (1+K(|\nabla p|)|\nabla p|^{s+a}) dx dt.
\eeq 
To estimate the last integral, we apply \eqref{new2} with parameters $s$, $\theta'$, $\theta$, $U'$ being replaced by $s+a$, $\theta/2$, $3\theta/4$, $V'$. 
Therefore, we obtain
\begin{align*}
\sup_{[\theta T,T]} \int_{U'} |\nabla p|^s  dx
&\le   C\Big\{((1+(\theta T)^{-1})^{s+a-1} (1+N_0)^{s+a-2} \int_0^T \int_{U} 1+K(|\nabla p|)|\nabla p|^2dx dt\Big\},
\end{align*} 
hence proving \eqref{new1}. The proof is complete.
\end{proof}

Now, we combine Proposition \ref{Prop53} with estimates in section \ref{revision} to express the bounds in terms of the initial and boundary data.

\begin{theorem}\label{gradsthm}
Let $U'\Subset U$ and $s\ge 2$. If $t\in(0,2)$ then
\begin{multline}\label{gradps-c}
\int_{U'} |\nabla p(x,t)|^s dx 
\le C t^{-\mu_1}(1+\|\bar p_0\|_{L^\alpha})^{\mu_2+2} \Big(1+ [ Env A(\alpha,t)]^\frac{1}{\alpha-a}+ \|\Psi\|_{L^\alpha(U\times(0,t))}\Big )^{\mu_2}\\
\quad \cdot\Big( 1+\int_0^t G_1(\tau)d\tau\Big),
\end{multline}
where 
\beq\label{mu12}
\mu_1=\mu_1(\alpha,s)\eqdef \big[1+\frac{2\kappa_1}{\alpha-a}\big](s+a-2)+1
\quad\text{and}\quad \mu_2=\mu_2(\alpha,s)\eqdef 2\kappa_2 (s-2+a).
\eeq
If $t\ge 2$ then
\begin{multline}\label{gradps-b}
\int_{U'} |\nabla p(x,t)|^s dx 
\le C (1+\|\bar p_0\|_{L^\alpha})^{\mu_2+\alpha}  \Big(1+ [ Env A(\alpha,t)]^\frac{1}{\alpha-a}+ \|\Psi\|_{L^\alpha(U\times(t-2,t))}\Big )^{\mu_2+\alpha}\\
 \cdot\Big( 1+\int_{t-1}^t G_1(\tau)d\tau\Big).
\end{multline}
\end{theorem}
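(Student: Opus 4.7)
The strategy is to feed the interior $L^\infty$ pressure bounds from Theorem~\ref{theo43} and the uniform Gronwall estimates for $\int\int H(|\nabla p|)\,dx\,d\tau$ from Section~\ref{revision} into Proposition~\ref{Prop53}. Since $K(\xi)\xi^2\le H(\xi)$ by \eqref{Hcompare}, the two ingredients on the right-hand side of \eqref{new1}, namely the supremum of $\|p\|_{L^\infty(V)}$ and the space-time integral of $1+K(|\nabla p|)|\nabla p|^2$, are exactly what those tools control. I would fix an intermediate domain $U'\Subset V\Subset U$ throughout, and choose the time window in \eqref{new1} differently in the two regimes so that, in the large-time case, no singular factors appear.

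For $t\in(0,2)$, I would apply \eqref{new1} with $T_0=0$, $T=t$, $\theta=1/2$. The prefactor $(1+(\theta T)^{-1})^{s+a-1}$ becomes $Ct^{-(s+a-1)}$. Applying Theorem~\ref{theo43}(i) pointwise on $[t/4,t]$, together with the monotonicity of $Env A(\alpha,\cdot)$ and of $\|\Psi\|_{L^\alpha(U\times(0,\cdot))}$ in time, one majorizes the supremum of $\|p\|_{L^\infty(V)}$ by
\beqs
Ct^{-\kappa_1/(\alpha-a)}\big(1+\|\bar p_0\|_{L^\alpha}+[Env A(\alpha,t)]^{1/(\alpha-a)}+\|\Psi\|_{L^\alpha(U\times(0,t))}\big)^{\kappa_2}.
\eeqs
For the space-time integral, estimate \eqref{t0} combined with $\int_U\bar p_0^2\,dx\le C(1+\|\bar p_0\|_{L^\alpha})^2$ (valid since $\alpha\ge 2$ and $U$ is bounded) yields the bound $C(1+\|\bar p_0\|_{L^\alpha})^2\big(1+\int_0^t G_1(\tau)\,d\tau\big)$. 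Multiplying the three ingredients and using $(1+x)^{s-2+a}\le C(1+x^{s-2+a})$, the singular factors accumulate to $t^{-(s+a-1)-2\kappa_1(s-2+a)/(\alpha-a)}=t^{-\mu_1}$, and the data factors regroup exactly into $(1+\|\bar p_0\|_{L^\alpha})^{\mu_2+2}$ and $(1+[Env A(\alpha,t)]^{1/(\alpha-a)}+\|\Psi\|_{L^\alpha(U\times(0,t))})^{\mu_2}$, establishing \eqref{gradps-c}.

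For $t\ge 2$, I would instead take $T_0=t-1$, $T=1$, $\theta=1/2$, so $(1+(\theta T)^{-1})^{s+a-1}=3^{s+a-1}$ is a harmless constant and no time singularity appears. The supremum in \eqref{new1} is now over $[t-3/4,t]\subset[1,t]$, so \eqref{pNsmall} applies pointwise; the inclusion $(\tau-1,\tau)\subset(t-2,t)$ for $\tau\in[t-3/4,t]$, together with monotonicity of $Env A$, majorizes the supremum by $C(1+\|\bar p_0\|_{L^\alpha}+[Env A(\alpha,t)]^{1/(\alpha-a)}+\|\Psi\|_{L^\alpha(U\times(t-2,t))})^{\kappa_2}$. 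The space-time integral on $[t-1,t]$ is controlled directly by Corollary~\ref{UGcor}(i), equation~\eqref{all1}. Raising the $L^\infty$ factor to the power $\mu_2$, multiplying by the Gronwall integral, and absorbing the additive split in \eqref{all1} into the common base via $(a+b)^n\le 2^{n-1}(a^n+b^n)$ (together with $[Env A]^{\alpha/(\alpha-a)}=([Env A]^{1/(\alpha-a)})^\alpha$), one arrives at the common exponent $\mu_2+\alpha$ on both data blocks required in \eqref{gradps-b}.

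The main bookkeeping obstacle is selecting the time windows so that (a) the sup-norm interval in Proposition~\ref{Prop53} stays bounded away from $0$ when $t\ge 2$, so that the non-singular \eqref{pNsmall} rather than the singular \eqref{e2} applies, and (b) the window $(\tau-1,\tau)$ appearing inside \eqref{pNsmall} sits inside the single interval $(t-2,t)$ asserted in \eqref{gradps-b}; the choice $T_0=t-1$, $T=1$ achieves both. Once the windows are fixed and the $L^\infty$ and space-time-integral inputs are inserted, everything reduces to algebraic verification that the exponents $\mu_1$, $\mu_2$, $\mu_2+2$ and $\mu_2+\alpha$ come out as claimed.
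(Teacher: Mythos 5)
Your proof is correct and follows essentially the same route as the paper: both cases apply Proposition~\ref{Prop53} (inequality \eqref{new1}) with the same parameter choices ($T_0=0$, $T=t$, $\theta=1/2$ for $t\in(0,2)$ and $T_0=t-1$, $T=1$, $\theta=1/2$ for $t\ge 2$), feed in \eqref{e2}/\eqref{pNsmall} for the interior $L^\infty$ bound and \eqref{t0}/\eqref{all1} for the space-time integral of $H(|\nabla p|)$, and the resulting exponent bookkeeping matches the paper's. The only minor point you handle slightly more explicitly than the paper is the possibility that the interval $[t/4,t]$ extends past $\tau=1$ when $1<t<2$, which causes no harm since \eqref{pNsmall} then gives a bound uniformly dominated by the singular one.
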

\begin{proof}
 Let $t\in(0,2)$. Applying \eqref{new1} to $T_0=0$, $T=t$ and $\theta=1/2$, and using \eqref{e2}, \eqref{t0} and relation \eqref{Hcompare}, we obtain 
 \begin{multline*}
\int_{U'} |\nabla p(x,t)|^s dx 
\le C t^{-\mu_1}\cdot \Big(1+\|\bar p_0\|_{L^\alpha} 
+ [ Env A(\alpha,t)]^\frac{1}{\alpha-a}+ \|\Psi\|_{L^\alpha(U\times(0,t))}\Big )^{2\kappa_2(s-2+a)}\\
\quad \cdot\Big( 1+\|\bar p_0\|_{L^2}^2 +\int_0^t G_1(\tau)d\tau\Big).
\end{multline*}
Then \eqref{gradps-c} follows.
 Let $t\ge 2$. Applying \eqref{new1} with $T_0=t-1$, $T=1$, $\theta=1/2$, then combining it with \eqref{pNsmall} and \eqref{all1}, we obtain
 \begin{multline*}
\int_{U'} |\nabla p(x,t)|^s dx 
\le C \Big(1+\|\bar p_0\|_{L^\alpha} 
+ [ Env A(\alpha,t)]^\frac{1}{\alpha-a}+ \|\Psi\|_{L^\alpha(U\times(t-2,t))}\Big )^{\mu_2}\\
 \cdot\Big( 1+\|\bar p_0\|_{L^\alpha}^\alpha +Env A(\alpha,t)^\frac{\alpha}{\alpha-a}+\int_{t-1}^t G_1(\tau)d\tau\Big).
\end{multline*}
Then  \eqref{gradps-b} follows.
\end{proof}

For large time estimates, we have:

\begin{theorem}\label{theo55}
Let $U'\Subset U$, $s\ge 2$, and let $\mu_2$ be defined as in Theorem \ref{gradsthm}.
  
{\rm (i)} If $A(\alpha)<\infty$ then
\beq\label{LsupGradp-s}
\limsup_{t\to\infty}\int_{U'} |\nabla p(x,t)|^s dx \le C \Big (1+A(\alpha)^\frac{1}{\alpha-a}+\limsup_{t\to\infty} \|\Psi\|_{L^\alpha( U\times (t-1,t))}\Big)^{\mu_2+\alpha}
\Big(1+ \limsup_{t\to\infty} \int_{t-1}^tG_1(\tau)d\tau\Big).
\eeq

{\rm (ii)} If $\beta(\alpha)<\infty$ then there is $T>0$ such that for all $t>T$,
\beq\label{Gradp-sL}
\begin{aligned}
\int_{U'} |\nabla p(x,t)|^s dx &\le C \Big( 1 + \beta(\alpha)^\frac{1}{\alpha-2a} + \sup_{[t-2,t]}A(\alpha,\cdot)^\frac{1}{\alpha-a} 
 + \|\Psi\|_{L^\alpha( U\times (t-2,t) )}\Big)^{\mu_2+\alpha}
\Big(1+\int_{t-1}^t G_1(\tau)d\tau\Big).
\end{aligned}
\eeq
\end{theorem}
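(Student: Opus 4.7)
The plan is to parallel the proof of Theorem \ref{gradsthm} almost verbatim, simply substituting the asymptotic bounds of Corollary \ref{UGcor}(ii)--(iii) and Theorem \ref{theo43}(ii)--(iii) in place of the transient bounds used there. Specifically, I would fix an intermediate open set $V$ with $U'\Subset V\Subset U$ and start, as in the proof of Theorem \ref{gradsthm}, by applying inequality \eqref{new1} of Proposition \ref{Prop53} with $T_0=t-1$, $T=1$, $\theta=1/2$ (on the pair $U'\Subset V$). Using \eqref{Kestn} and \eqref{Hcompare} to pass from $K(|\nabla p|)|\nabla p|^2$ to $H(|\nabla p|)$ in the resulting space-time integral, this yields, for $t\ge 2$,
\beq\label{plan-master}
\int_{U'}|\nabla p(x,t)|^s\,dx \le C\bigl(1+\sup_{[t-3/4,t]}\|p\|^2_{L^\infty(V)}\bigr)^{s-2+a}\Bigl(1+\int_{t-1}^{t}\!\int_{U}H(|\nabla p|)\,dx\,d\tau\Bigr).
\eeq

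For part (i), I would take $\limsup_{t\to\infty}$ on both sides of \eqref{plan-master}. The factor $\sup_{[t-3/4,t]}\|p\|^2_{L^\infty(V)}$ is controlled by \eqref{limsupPbarI} (applied with $U'$ there replaced by $V$), contributing the factor $C(1+A(\alpha)^{1/(\alpha-a)}+\limsup_{t\to\infty}\|\Psi\|_{L^\alpha(U\times(t-1,t))})^{2\kappa_2(s-2+a)}$, whose exponent is exactly $\mu_2$. The space-time integral $\int_{t-1}^t\!\int_U H(|\nabla p|)\,dx\,d\tau$ is controlled by \eqref{lim3}, contributing $C(1+A(\alpha)^{\alpha/(\alpha-a)}+\limsup_{t\to\infty}\int_{t-1}^{t}G_1(\tau)\,d\tau)$. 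Writing $A(\alpha)^{\alpha/(\alpha-a)}=\bigl(A(\alpha)^{1/(\alpha-a)}\bigr)^{\alpha}$ (and similarly for the $\Psi$ term, via the elementary estimate $X^\alpha\le (1+X)^\alpha$), one absorbs the extra factor of exponent $\alpha$ into the first group to upgrade the exponent from $\mu_2$ to $\mu_2+\alpha$, leaving the $\int_{t-1}^{t}G_1$ piece as a separate multiplicative factor in additive form. This produces exactly \eqref{LsupGradp-s}.

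Part (ii) is entirely analogous: by hypothesis $\beta(\alpha)<\infty$, Theorem \ref{theo43}(iii) provides a $T>0$ past which $\|p(\tau)\|_{L^\infty(V)}$ on $\tau\in[t-1,t]$ admits the bound \eqref{PbarI} — modulo replacing $\|\Psi\|_{L^\alpha(U\times(\tau-1,\tau))}$ by its supremum over the slightly larger window, which is absorbed into $\|\Psi\|_{L^\alpha(U\times(t-2,t))}$, and replacing $\|A(\alpha,\cdot)\|_{L^{\alpha/(\alpha-a)}(\tau-1,\tau)}^{1/(\alpha-a)}$ by $\sup_{[t-2,t]}A(\alpha,\cdot)^{1/(\alpha-a)}$. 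The integral of $H(|\nabla p|)$ on $[t-1,t]\times U$ is in turn estimated by \eqref{large3}. Combining these two bounds via \eqref{plan-master} and using the same $X^{\alpha/(\alpha-2a)}=\bigl(X^{1/(\alpha-2a)}\bigr)^{\alpha}$ type repackaging as in part (i) to merge the $\alpha/(\alpha-2a)$ and $\alpha/(\alpha-a)$ powers arising from \eqref{large3} into the outer parenthesized group at exponent $\mu_2+\alpha$, we arrive at \eqref{Gradp-sL}.

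The only genuine work is the exponent bookkeeping at the end of each part: one must verify that $2\kappa_2(s-2+a)=\mu_2$ (this is the definition \eqref{mu12}) and that the inhomogeneous $1$'s accumulated during the absorption step do not spoil the final form. There is no new analytic difficulty beyond Proposition \ref{Prop53} and the already-established Theorem \ref{theo43} and Corollary \ref{UGcor}; the proof is a mechanical consequence of combining them and should occupy only a short paragraph.
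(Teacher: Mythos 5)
Your proposal reproduces the paper's own proof essentially verbatim: the paper also begins from \eqref{new1} with $T_0=t-1$, $T=1$, $\theta=1/2$, then invokes \eqref{limsupPbarI} and \eqref{lim3} for part (i), and \eqref{PbarI} and \eqref{large3} for part (ii), finishing with exactly the absorption step you describe (folding the $A(\alpha)^{\alpha/(\alpha-a)}$, resp.\ $\beta(\alpha)^{\alpha/(\alpha-2a)}$ and $A(\alpha,t-1)^{\alpha/(\alpha-a)}$, terms into the first bracket to raise its exponent from $\mu_2$ to $\mu_2+\alpha$). The only cosmetic difference is that the paper states the $L^\infty$-supremum over $[t-1,t]$ rather than the tighter $[t-3/4,t]$ that \eqref{new1} actually produces, which is immaterial for the upper bound.
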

\begin{proof} For $t\ge 1$
Applying \eqref{new1} with $T_0=t-1$, $T=1$, $\theta=1/2$, 
  \beq\label{Grads0}
  \begin{aligned}
  &\int_{U'}|\nabla p(x,t)|^s dx dt
  \le C\big(1+\sup_{[t-1,t]} \|p\|^2_{L^\infty(V)} \big)^{s-2+a} \cdot \int_{t-1}^t \int_{U} (1+K(|\nabla p|) |\nabla p|^2) dxdt.
  \end{aligned}
  \eeq

(i) Taking limit superior as $t\to\infty$ and using \eqref{limsupPbarI}, \eqref{lim3} give
\begin{align*}
\limsup_{t\to\infty}\int_{U'} |\nabla p(x,t)|^s dx 
&\le C \big (1+A(\alpha)^\frac{1}{\alpha-a}+\limsup_{t\to\infty} \|\Psi\|_{L^\alpha( U\times (t-1,t) )}\big)^{\mu_2}\\
&\quad \cdot \Big (1+A(\alpha)^\frac {\alpha} {\alpha-a} +\limsup_{t\to\infty} \int_{t-1}^tG_1(\tau)d\tau\Big).
\end{align*}
Then estimate \eqref{LsupGradp-s} follows. 

(ii)  By combining \eqref{Grads0} with \eqref{PbarI} and \eqref{large3}, we have
\begin{align*}
\int_{U'} |\nabla p(x,t)|^s dx 
&\le C \Big\{1+\beta(\alpha)^\frac 1{\alpha-2a} +\sup_{[t-2,t]} A(\alpha,\cdot)^\frac 1{\alpha-a} +  \|\Psi\|_{L^\alpha( U\times (t-2,t) )}\Big\}^{\mu_2}\\
&\quad \cdot \Big (1+\beta(\alpha)^\frac {\alpha}{\alpha-2a} + A(\alpha,t-1)^\frac{\alpha}{\alpha-a} +\int_{t-1}^tG_1(\tau)d\tau\Big ).
\end{align*}
Then \eqref{Gradp-sL} follows.
\end{proof}


\subsection{ $L^\infty$-estimates}\label{subgrad2}
In this subsection, we obtain  interior $L^\infty$-estimates for the gradient of pressure.
For each $m=1,2,\ldots,n,$ denote $u_m=p_{x_m}$ and $u=(u_1,u_2,\ldots,u_n)=\nabla p$. 
We have
\beq\label{dum}
\frac{\partial u_{m}}{\partial t}=\partial_m  ( \nabla \cdot (  K(|u|)  u) )= \nabla \cdot (K(|u|) \partial_m u) +\nabla \cdot \Big[ K'(|u|) \frac{\sum_{i} u_i \partial_m u_i }{|u|} u\Big].          
\eeq
Since $\partial_i u_m=\partial_m u_i$, we have 
$\partial_m u = ( \partial_m u_1,\ldots, \partial_m u_n ) =( \partial_1 u_m, \ldots, \partial_n u_m) =
\nabla u_m$, and
$\sum_{i} u_i \partial_m u_i=\sum_{i} u_i \partial_i u_m=u\cdot \nabla u_m$.
Therefore, we rewrite \eqref{dum} as
\beq \label{um}
\frac{\partial u_{m}}{\partial t} = \nabla\cdot(K(|u|) \nabla u_m) +\nabla \cdot \Big[ K'(|u|) \frac{u \cdot \nabla u_m  }{|u|} u\Big].
\eeq

We will apply De Giorgi's technique to equation \eqref{um}. 
In the following, we fix a number $s_0$ such that $r=s_0$ satisfies \eqref{rcond}. 
Note that $s_0^*>2$.
We will also use $s_j$ for $j\ge 1$ to denote some exponents that depend on $s_0$ but are independent of $\alpha$.
Let 
\beqs 
s_1=(1-2/s_0^*)^{-1}>1.
\eeqs

\begin{theorem}\label{GradUni} Let  $U'  \Subset V\Subset  U$. For any $T_0\ge 0$, $T>0$, and $\theta\in (0,1)$, if $t \in [T_0+\theta T,T_0+T]$ then    
\beq\label{gradInf}
\norm{\nabla p(t)}_{L^\infty(U')}\le C (1+(\theta T)^{-1})^\frac{s_1+1}{2} \lambda^\frac{s_1}2  \norm{\nabla p}_{L^2(V\times (T_0+\theta T/2,T_0+T))}, 
\eeq
where 
\beq\label{lambda}
\lambda=\lambda(T_0, T,\theta;V) = \Big( \int_{T_0+\theta T/2}^{T_0+T} \int_V (1+|\nabla p|)^{\frac {a s_0}{2-s_0}}dx dt  \Big)^\frac{2-s_0}{s_0}.
\eeq
and constant $C>0$ is independent of $T_0$, $T$, and $\theta$.
\end{theorem}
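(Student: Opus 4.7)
The plan is to apply a De Giorgi iteration directly to the scalar equation \eqref{um} for each component $u_m=\partial_m p$ of $u=\nabla p$. Since $-u_m$ satisfies an analogous equation and $|\nabla p|\le\sqrt n\max_m\|u_m\|_{L^\infty}$, it suffices to bound $u_m$ from above by some $M_0$; combining the one-sided bounds over $m=1,\ldots,n$ and over $\pm u_m$ then gives \eqref{gradInf}.

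\textbf{Step 1: Caccioppoli inequality.} First I test \eqref{um} against $(u_m-k)_+\zeta^2$ for $k>0$ and a smooth cut-off $\zeta$ supported in $V\times(T_0+\theta T/2,T_0+T]$ with $\zeta(\cdot,T_0+\theta T/2)\equiv 0$. Integration by parts sends the principal diffusion to the coercive form $\int K(|u|)|\nabla(u_m-k)_+|^2\zeta^2\,dx$, while the structural term $\nabla\cdot[K'(|u|)(u\cdot\nabla u_m/|u|)u]$ produces, via \eqref{K-est-2} in the form $|K'(|u|)||u|^2\le a\,K(|u|)$, an integrand dominated by $a\,K(|u|)|\nabla u_m|\,\bigl|\nabla[(u_m-k)_+\zeta^2]\bigr|$, which is absorbable into the coercive term because $a<1$. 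After Cauchy--Schwarz and Young's inequality, this yields
\begin{equation*}
\sup_t\!\int(u_m-k)_+^2\zeta^2\,dx + \iint K(|u|)|\nabla(u_m-k)_+|^2\zeta^2\,dx\,dt \le C\iint(u_m-k)_+^2\bigl(|\nabla\zeta|^2+\zeta|\zeta_t|\bigr)\,dx\,dt.
\end{equation*}

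\textbf{Step 2: weighted embedding.} Next I apply Lemma \ref{WSobolev} with $W=K(|u|)$, $r=s_0$, and test function $(u_m-k)_+\zeta$ (which vanishes near $\partial U$, so $\delta=0$), upgrading the $L^2$-based energy to an $L^\varrho$ bound with $\varrho=4(1-1/s_0^*)$. By \eqref{K-est-3}, $W^{-s_0/(2-s_0)}\le C(1+|u|)^{as_0/(2-s_0)}$, so the weight contribution in \eqref{Wei1}, restricted to the support of $(u_m-k)_+\zeta\subset V\times(T_0+\theta T/2,T_0+T)$, is controlled by an appropriate power of $\lambda$ from \eqref{lambda}. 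The key gain exponent $\mu=1-2/\varrho$ satisfies the identities $1/(2\mu)=(s_1+1)/2$ and $1/(\varrho\mu)=s_1/2$, which follow directly from $s_1=(1-2/s_0^*)^{-1}$ and $\varrho-2=2/s_1$.

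\textbf{Step 3: iteration.} Fixing $x_0\in\overline{U'}$ with some $\rho>0$ so that $B_{\rho/2}(x_0)\Subset V$, I set $k_i=M_0(1-2^{-i})$, $t_i=T_0+\theta T(1-2^{-i}/2)$, and take shrinking balls $B_{\rho_i}(x_0)$ with $\rho_i\downarrow\rho/4$, with cut-offs $\zeta_i$ satisfying $|\nabla\zeta_i|^2+\zeta_i|(\zeta_i)_t|\le C\,4^i\bigl(1+(\theta T)^{-1}\bigr)$. Letting $\mathcal Q_i=B_{\rho_i}(x_0)\times(t_i,T_0+T)$ and $Y_i=\|(u_m-k_i)_+\|_{L^2(\mathcal Q_i)}$, Chebyshev's inequality gives $|\{u_m>k_{i+1}\}\cap\mathcal Q_i|\le C\,4^iM_0^{-2}Y_i^2$. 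Combining the Caccioppoli inequality, the weighted embedding, and H\"older's inequality produces a recurrence
\begin{equation*}
Y_{i+1}\le C\,B^i\bigl(1+(\theta T)^{-1}\bigr)^{1/2}\lambda^{1/\varrho}M_0^{-\mu}Y_i^{1+\mu},\qquad B>1.
\end{equation*}
By Lemma \ref{multiseq} (single-term case), choosing $M_0=C\bigl(1+(\theta T)^{-1}\bigr)^{(s_1+1)/2}\lambda^{s_1/2}\|\nabla p\|_{L^2(V\times(T_0+\theta T/2,T_0+T))}$ forces $Y_i\to 0$, hence $u_m\le M_0$ a.e.\ on $B_{\rho/4}(x_0)\times(T_0+\theta T,T_0+T)$. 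A finite covering of $\overline{U'}$, the parallel argument for $-u_m$, and maximization over $m$ yield \eqref{gradInf}.

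The main obstacle will be controlling the structural cross term in \eqref{um} without losing the coercivity of the principal diffusion---precisely what \eqref{K-est-2} is designed for---and matching the weight contribution from Lemma \ref{WSobolev} with the specific quantity $\lambda$ so that the exponents $(s_1+1)/2$ and $s_1/2$ emerge correctly from the iteration bookkeeping. The latter ultimately reduces to the two identities noted in Step 2.
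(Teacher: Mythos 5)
Your proposal is correct and follows essentially the same route as the paper: test \eqref{um} against $(u_m-k)_+\zeta^2$, control the cross term via \eqref{K-est-2}, absorb into the coercive diffusion, apply Lemma~\ref{WSobolev} with $W=K(|\nabla p|)$ and $r=s_0$, and run a geometric De Giorgi iteration closed by Lemma~\ref{multiseq}; your exponent bookkeeping ($\mu=1-2/\varrho$, $\varrho-2=2/s_1$, hence $1/(2\mu)=(s_1+1)/2$ and $1/(\varrho\mu)=s_1/2$) matches the paper's $\nu_2,\nu_3$. One small slip: you quote \eqref{K-est-2} as $|K'(|u|)|\,|u|^2\le aK(|u|)$; the correct form is $|K'(|u|)|\,|u|\le aK(|u|)$, which is also the one your subsequent bound on the integrand actually uses, so the argument is unaffected.
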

\begin{proof} Without loss of generality, assume $T_0=0$. Fix $m\in\{1,2,\ldots,n\}$. 
We will show for $t\in[\theta T,T]$ that
\beq\label{grad519}
\norm{p_{x_m}(t)}_{L^\infty(U')}\le C(1+(\theta T)^{-1})^{\frac{s_1+1}2}\lambda^{\frac {s_1}2} \|p_{x_m}\|_{L^2(V\times (T_0+\theta T/2,T_0+T))}.
\eeq

 Let $\zeta(x,t)=\phi(x)\varphi(t)$ be a cut-off function with $\varphi(t)=0$ for $t\le \theta T/2$, and supp\,$\phi \subset U$. We define for $k\ge 0$
\begin{align*}
 u_m^{(k)} =\max\{u_m-k,0\},
\quad S_{k}(t)=\{ x\in U: u_m^{(k)}(x,t)\ge 0\},
\end{align*}
and denote by $\chi_k(x,t)$ the characteristic function of $S_k(t)$.

Multiplying \eqref{um} by $u_m^{(k)} \zeta^2$ and integrating over $U$, we have
\begin{multline}\label{extra1}
 \frac 12\frac d{dt}\int_U  |u_m^{(k)}|^2 \zeta^2 dx
=  \int_U |u_m^{(k)}|^2 \zeta|\zeta_t| dx - \int_U  K(|u|)|\nabla u_m^{(k)}|^2 \zeta^2 dx -2\int_U K(|u|)(\nabla u_m^{(k)}\cdot \nabla \zeta)u_m^{(k)} \zeta dx\\ 
 -\int_U \frac1{|u|}K'(|u|) (u\cdot \nabla u_m^{(k)}) (u\cdot \nabla ( u_m^{(k)}\zeta^2 ))  dx.
\end{multline} 
By the product rule,
\beqs
-\frac1{|u|}K'(|u|) (u\cdot \nabla u_m^{(k)})(u\cdot \nabla ( u_m^{(k)}\zeta^2 )) =-\frac1{|u|}K'(|u|) (u\cdot \nabla u_m^{(k)}) \Big( u\cdot \nabla u_m^{(k)} \zeta^2 + 2u_m^{(k)} \zeta u\cdot \nabla \zeta\Big).
\eeqs
It follows property \eqref{K-est-2} that
\beqs
-\frac1{|u|}K'(|u|)(u\cdot \nabla ( u_m^{(k)})^2\zeta^2\le \frac {aK(|u|)}{|u|^2} |u|^2 |\nabla u_m^{(k)}|^2 \zeta^2=aK(|u|) |\nabla u_m^{(k)}|^2 \zeta^2  
\eeqs
and
\beqs
-\frac 2{|u|}K'(|u|)u^2 u_m^{(k)} \zeta  \nabla u_m^{(k)} \cdot   \nabla \zeta \le \frac2{|u|}| K'(|u|)  |u|^2 |\nabla u_m^{(k)}| |u_m^{(k)}|  \zeta |\nabla \zeta| \le C K(|u|) |\nabla u_m^{(k)}| |u_m^{(k)}| \zeta |\nabla \zeta|.
\eeqs
Then we obtain from \eqref{extra1} that  
\begin{align*}
 \frac 12\frac d{dt}\int_U |u_m^{(k)}|^2 \zeta^2 
&\le  \int_U |u_m^{(k)}|^2 \zeta|\zeta_t| dx -(1-a)\int_U  K(|u|)|\nabla u_m^{(k)}|^2 \zeta^2 dx\\
&\quad+ C\int_U K(|u|) |\nabla u_m^{(k)}| |u_m^{(k)}| \zeta |\nabla \zeta|  dx.
\end{align*}
Applying Cauchy's inequality to the last term in previous inequality  yields
\begin{equation*}
 \frac 12\frac d{dt}\int_U |u_m^{(k)}|^2 \zeta^2 
\le  \int_U |u_m^{(k)}|^2 \zeta|\zeta_t| dx -\frac{1-a}{2}\int_U  K(|u|)|\nabla u_m^{(k)}|^2 \zeta^2 dx + C \int_U K(|u|) |u_m^{(k)}|^2 |\nabla \zeta|^2  dx. 
\end{equation*}
Since $|\zeta\nabla u_m^{(k)}|^2=|\nabla (u_m^{(k)} \zeta)-u_m^{(k)}\nabla \zeta|^2\ge \frac12|\nabla (u_m^{(k)} \zeta)|^2 - |u_m^{(k)}\nabla \zeta|^2$, 
we obtain
\beq \label{cut5}
\begin{aligned}
 \frac 12\frac d{dt}\int_U |u_m^{(k)} \zeta|^2 dx 
&+\frac{1-a}4\int_U  K(|u|)|\nabla (u_m^{(k)} \zeta)|^2 dx\\ 
&\le \int_U |u_m^{(k)}|^2 \zeta|\zeta_t| dx + C \int_U K(|u|) |u_m^{(k)}|^2 |\nabla \zeta|^2  dx. 
\end{aligned}
\eeq
Integrating \eqref{cut5} from $0$ to $t$ for $t\in [0,T]$, and then taking supremum in $t$ give 
\beq\label{cut6}
\begin{aligned}
\max_{[0,T]}\int_U |u_m^{(k)} \zeta|^2 dx  
&+C\int_0^T \int_U  K(|u|)|\nabla (u_m^{(k)} \zeta)|^2 dxdt\\
 &\le \int_0^T\int_U |u_m^{(k)}|^2 \zeta|\zeta_t| dx+C \int_0^T \int_U K(|u|) |u_m^{(k)}|^2 |\nabla \zeta|^2  dx dt. 
\end{aligned}
\eeq
Let 
\beq\label{nu2} 
\nu_2=4(1-1/s_0^*)>2.
\eeq
Applying Lemma~\ref{WSobolev} to function $u_m^{(k)} \zeta$ which vanishes on the boundary, weight $W =K(|u|)$ and exponents $r=s_0$, $\varrho=\varrho(s_0)=\nu_2$,  we have 
\begin{align*}
\norm{ u_m^{(k) }\zeta}_{L^{\nu_2}(Q_T)}  &\le C\Big[ \esssup_{t\in [0,T]} \norm{u_m^{(k) }\zeta }_{L^2(U)}+\Big(\int_0^T \int_U K(|u|)|\nabla (u_m^{(k) }\zeta) |^2 dx dt\Big)^\frac 1 2\Big]\\
&\quad\cdot   \Big[\int_0^T\int_{{\rm supp} \zeta} K(|u|)^{-\frac{s_0}{2-s_0}}dx  dt \Big]^{\frac {2- s_0}{\nu_2 s_0} }.
\end{align*}
Using \eqref{Kestn}, we have $K(|u|)^{-\frac{s_0}{2-s_0}}\le C(1+|u|)^{\frac{a s_0}{2-s_0}}$, hence 
\beq\label{cut7}
\norm{ u_m^{(k) }\zeta}_{L^{\nu_2}(Q_T)} \le C\lambda^{1/\nu_2} \Big[ \max_{t\in [0,T]} \int_U |u_m^{(k) }\zeta|^2 dx +\int_0^T \int_U K(|u|)|\nabla (u_m^{(k) }\zeta) |^2 dx dt\Big]^\frac12.
\eeq
By \eqref{cut7}, \eqref{cut6} and the boundedness of function $K(\cdot)$ we find that 
 \beq\label{umk-Ls*}
 \begin{aligned}
\norm{ u_m^{(k) }\zeta}_{L^{\nu_2}(Q_T)} &\le C\lambda^{1/\nu_2} \Big(\max_{[0,T]}\int_U |u_m^{(k)} \zeta|^2 dx  
+C\int_0^T \int_U  K(|u|)|\nabla (u_m^{(k)} \zeta)|^2 dxdt\Big)^\frac12\\
&\le C\lambda^{1/\nu_2} \Big(  \int_0^T\int_U |u_m^{(k)}|^2 \zeta|\zeta_t| dx+\int_0^T \int_U |u_m^{(k)}|^2 |\nabla \zeta|^2  dx dt\Big)^{\frac12}. 
\end{aligned}
\eeq

Here, we use the same notation $x_0$, $\rho$, $M_0$, $k_i$,  $\rho_i$, $U_i$, $\mathcal Q_i$ and $\zeta_i$  as introduced in the proof of Theorem \ref{theo42} from \eqref{indexSeq2} to \eqref{cutoffBound2}. Also, the sets $A_{i,j}$ are defined by \eqref{setDef2} with $p$ being replaced by $u_m$.

Define 
$F_{i} = \norm{u_m^{(k_{i+1})}\zeta_i }_{L^{\nu_2}(A_{i+1, i})}$. 
Applying \eqref{umk-Ls*} with  $k=k_{i+1}$  and $\zeta=\zeta_i$ gives
\beq\label{defFi}
F_i \le C\lambda^{1/\nu_2}  \Big\{ \int_0^T\int_U |u_m^{(k_{i+1})}|^2 \zeta_i|(\zeta_i)_t| dx+ \int_0^T \int_U |u_m^{(k_{i+1})}|^2 |\nabla \zeta_i|^2\Big) dxdt \Big\}^{ 1/2}.
\eeq
Using \eqref{cutoffBound2},  we obtain  
\beqs \label{Fi1}
F_{i}
\le C2^i\lambda^{1/\nu_2} (1+\frac 1{\theta T})^{1/2}   \|u_m^{(k_{i+1})}\|_{L^2(A_{i+1,i})} \le C2^i(1+\frac 1{\theta T})^{1/2}\lambda  \|u_m^{(k_i)}\|_{L^2(A_{i,i})}.
\eeqs
Since $\nu_2>2$, it follows from H\"{o}lder's inequality that
\beq\label{cutum}
\begin{aligned}
 \|u_m^{(k_{i+1})  }\zeta_i \|_{L^2(A_{i+1, i+1})}
& \le \|u_m^{(k_{i+1})  }\zeta_i\|_{L^{\nu_2}(A_{i+1,i+1})} |A_{i+1,i+1}|^{1/2-1/{\nu_2}}\\
& \le \|u_m^{(k_{i+1})  }\zeta_i \|_{L^{\nu_2}(A_{i+1,i+1})} |A_{i+1,i}|^{1/2-1/{\nu_2}}
 \le C  F_{i}  |A_{i+1,i}|^{1/2-1/{\nu_2}}.
\end{aligned}
\eeq
Note that 
$ \|u_m^{(k_i)}\|_{L^2(A_{i,i})}\ge  \|u_m^{(k_{i})}\|_{L^2(A_{i+1,i})}\ge (k_{i+1}-k_i) |A_{i+1,i}|^{1/2}$. 
Thus, 
\beq\label{volA}
  |A_{i+1,i}| \le (k_{i+1}-k_i)^{-2} \|u_m^{(k_i)}\|_{L^2(A_i)}^{2} \le C 4^{i} M_0^{-2}\|u_m^{(k_i)}\|_{L^2(A_{i,i})}^{2}.
\eeq
Then it follows \eqref{cutum}, \eqref{defFi} and \eqref{volA} that
\begin{align*}
\|u_m^{(k_{i+1})}\|_{L^2(A_{i+1,i+1})}&\le C2^i(1+\frac 1{\theta T})^{1/2}\lambda^{1/\nu_2} 
\|u_m^{(k_i)}\|_{L^2(A_i)} 2^{i-\frac {2i} {\nu_2} }M_0^{-1+2/{\nu_2}} \|u_m^{(k_i)}  \|_{L^2(A_{i,i})}^{1-2/{\nu_2}}\\
&\le C 4^i(1+\frac 1{\theta T})^{1/2} \lambda^{1/\nu_2} M_0^{-1+2/{\nu_2}}  \norm{u_m^{(k_i)}}_{L^2(A_{i,i})}^{2-2/{\nu_2}}.
\end{align*}

Denote $\nu_3=1-2/\nu_2$.
Let $Y_i=\| u_m^{(k_i)} \|_{L^2(A_{i,i})}$, $B =4$ and
$D=C(1+\frac 1{\theta T})^{1/2} \lambda^{1/\nu_2} M_0^{-\nu_3}.$
 We obtain 
$$Y_{i+1}\le D B^i Y_i^{1+\nu_3} \quad\text{for all } i\ge 0.$$

We now determine $M_0$ so that $ Y_0 \le D^{-1/\nu_3}B^{-1/{\nu_3^2}}$.
This condition is met if
$$ M_0\ge C  \Big[\lambda^{1/\nu_2} (1+\frac 1{\theta T})^{1/2} \Big]^{1/\nu_3} Y_0=C  \lambda^{s_1/2} (1+\frac 1{\theta T})^\frac{1+s_1}{2} Y_0.$$
Since 
$
Y_0= \norm{u_m^{(k_0)}}_{L^2{(A_{0,0})}}\le \| u_m\|_{L^2(V\times (\theta T/2,T))},
$
 it suffices to choose $M_0$ as
\beq\label{M0} 
M_0 = C \lambda^{s_1/2}(1+\frac 1{\theta T})^\frac{s_1+1}{2} \|u_m\|_{L^2(V\times (\theta T/2,T))}.
\eeq
Then Lemma \ref{multiseq} gives 
$\displaystyle{\lim_{i\to\infty}}Y_i=0$. 
Hence, 
$$\int_{\theta T}^T\int_{B(x_0,\rho/4)} |u_m^{(M_0)}|^2 dxdt=0.$$
Thus, $u_m(x,t)\le M_0$ a.e. in $B(x_0,\rho/4)\times (0,T)$.
Replace $u_m,u$ by $-u_m,-u$ and 
use the same argument we obtain $|u_m(x,t)|\le M_0$ a.e. in $B(x_0,\rho/4)\times (0,T)$.
Now by covering $U'$ by finitely many such balls $B(x_0,\rho/4)$, we come to conclusion
\beq\label{M0um}
|u_m(x,t)|\le M_0 \quad  \text{a.e. in } U'\times (\theta T,T). 
\eeq
By the choice of $M_0$
  we obtain from \eqref{M0um} that
\beqs
|u_m(x,t)|\le C (1+\frac 1{\theta T})^\frac{s_1+1}{2} \lambda^{s_1/2} \|u_m\|_{L^2(V\times (\theta T/2,T))}  
\eeqs
for all $m=1,\ldots,n.$ Then \eqref{grad519} follows.
\end{proof}

We will combine Theorem \ref{GradUni} with the high integrability of $\nabla p$ in subsection \ref{subgrad1} to obtain the $L^\infty$-estimates.  
Let 
\begin{align*}
 s_2&=\max\Big\{2,\frac{as_0}{2-s_0}\Big\},\quad
 s_3=s_1(2-s_0)/s_0+1,\\
\kappa_3&= ( s_2+a-2)\big(1+\frac{2\kappa_1}{\alpha-a}\big),\quad
\kappa_4= 1+s_1+\kappa_3 s_3,\quad 
\kappa_5= \kappa_2( s_2-2+a).
\end{align*}
\begin {theorem}\label{theo57}
If $t\in(0,2)$ then
\begin{multline}\label{Gradps}
\norm{\nabla p(t)}_{L^\infty(U')}\le C t^{-\kappa_4/2} (1+\|\bar p_0\|_{L^\alpha})^{ s_3(\kappa_5+1)}
\Big(1+Env A(\alpha,t)^\frac{1}{\alpha-a} + \|\Psi\|_{L^\alpha(U\times(0,t))}\Big)^{ s_3 \kappa_5}\\
\cdot \Big( 1+\int_0^t G_1(\tau)d\tau\Big)^{ s_3/2}.
\end{multline}
If $t\ge 2$ then
\begin{multline}\label{GradpL}
\norm{\nabla p(t)}_{L^\infty(U')}
\le  C (1+\|\bar p_0\|_{L^\alpha})^{ s_3(\kappa_5+\alpha/2)}  \Big(1+ [ Env A(\alpha,t)]^\frac{1}{\alpha-a}+ \|\Psi\|_{L^\alpha(U\times(t-2,t))}\Big )^{ s_3(\kappa_5+\alpha/2)}\\
\cdot \Big(1+\int_{t-1}^t G_1(\tau)d\tau\Big)^{ s_3/2}.
\end{multline}
\end{theorem}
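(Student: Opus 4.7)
The strategy is to combine the pointwise-in-time bound of Theorem~\ref{GradUni} with the high-$L^s$ integrability of $\nabla p$ from Theorem~\ref{gradsthm}. Fix intermediate open sets $U'\Subset V\Subset V'\Subset U$ and apply Theorem~\ref{GradUni} to the pair $(U',V)$ with $\theta=1/2$, choosing $(T_0,T)=(0,t)$ in the regime $t\in(0,2)$ and $(T_0,T)=(t-1,1)$ in the regime $t\ge 2$. This reduces the $L^\infty(U')$-estimate to controlling three ingredients: the prefactor $(1+(\theta T)^{-1})^{(s_1+1)/2}$, the quantity $\lambda^{s_1/2}$, and the norm $\|\nabla p\|_{L^2(V\times I)}$ where $I=(T_0+\theta T/2,\,T_0+T)$.

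Next, I would dominate both $\lambda$ and the $L^2$-norm by a single auxiliary quantity $M=\sup_{\tau\in I}\int_V |\nabla p(\cdot,\tau)|^{s_2}\,dx$. Since $s_2=\max\{2,\,as_0/(2-s_0)\}$, the pointwise inequalities $(1+\xi)^{as_0/(2-s_0)}\le C(1+\xi^{s_2})$ and $\xi^2\le C(1+\xi^{s_2})$ for $\xi\ge 0$, integrated over $V\times I$, yield $\lambda\le CT^{(2-s_0)/s_0}(1+M)^{(2-s_0)/s_0}$ and $\|\nabla p\|_{L^2(V\times I)}^2\le CT(1+M)$. Multiplying these two bounds and invoking the algebraic identity $s_1(2-s_0)/(2s_0)+1/2=s_3/2$ produces $\lambda^{s_1/2}\,\|\nabla p\|_{L^2(V\times I)}\le CT^{s_3/2}(1+M)^{s_3/2}$.

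Finally, I would bound $M$ using Theorem~\ref{gradsthm} with $s=s_2$ on the pair $V\Subset V'$, noting the identity $\mu_2(\alpha,s_2)=2\kappa_5$. When $t\ge 2$, estimate \eqref{gradps-b} controls $M$ by the stated data expression raised to the power $2\kappa_5+\alpha$, evaluated at the worst time in $I\subset[t-1,t]$; the $s_3/2$-th power then produces exactly the exponent $s_3(\kappa_5+\alpha/2)$ appearing in \eqref{GradpL}, while $T^{s_3/2}$ and the prefactor collapse to constants. When $t\in(0,2)$, estimate \eqref{gradps-c} applied at the worst (smallest) time in $I\subset[t/4,t]$ introduces an extra factor $t^{-\mu_1}$ with $\mu_1=\kappa_3+1$; collecting the three $t$-powers $t^{-(s_1+1)/2}$, $t^{s_3/2}$, and $t^{-(\kappa_3+1)s_3/2}$ yields $t^{-\kappa_4/2}$ via the identity $s_1+1+\kappa_3 s_3=\kappa_4$. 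The main (and essentially only) difficulty is the exponent bookkeeping, since all the analytic content is already encapsulated in Theorems~\ref{GradUni} and~\ref{gradsthm}.
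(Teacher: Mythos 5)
Your proposal is correct and follows essentially the same route as the paper: combining \eqref{gradInf} with the $L^{s_2}$-bounds from Theorem~\ref{gradsthm} at $s=s_2$, using $s_2\ge\max\{2,\,as_0/(2-s_0)\}$ to absorb both $\lambda$ and the $L^2$-norm, and then tracking exponents via $\mu_1(\alpha,s_2)=\kappa_3+1$, $\mu_2(\alpha,s_2)=2\kappa_5$, and $(2-s_0)s_1/s_0+1=s_3$. The paper packages the intermediate step as a single double-integral inequality \eqref{gradmain} rather than your auxiliary $\sup$-in-time quantity $M$, but the two are equivalent up to a factor of $|I|$ and your exponent bookkeeping matches the paper's exactly.
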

\begin{proof}
First, we have from \eqref{gradInf} that
\beq\label{gradmain}
\sup_{[T_0+\theta T,T_0+T]}\norm{\nabla p(t)}_{L^\infty(U')}\le C (1+(\theta T)^{-1})^{\frac{1+s_1}{2}} 
\Big( \int_{T_0+\theta T/2}^{T_0+T} \int_V (1+|\nabla p|)^ {s_2} dx dt  \Big)^{ s_3/2}.
\eeq

Let $t\in(0,2)$,  applying \eqref{gradmain} with $T_0=0$, $T=t$, and $\theta=1/2$, we obtain 
\beq\label{gradmain2}
\norm{\nabla p(t)}_{L^\infty(U')}\le C t^{-\frac{1+s_1}{2}} \Big( \int_{t/4}^t \int_V (1+|\nabla p|)^ {s_2} dx dt  \Big)^{ s_3/2}.
\eeq
We apply \eqref{gradps-c} with $s= s_2$ and $U'=V$. Note from formulas in \eqref{mu12} that 
\beq \label{murel}
\mu_1(\alpha, s_2)=\kappa_3+1\quad\text{and}\quad   \mu_2(\alpha, s_2)=2\kappa_5.
\eeq
We obtain
\begin{multline*} \label{gradmain3}
\norm{\nabla p(t)}_{L^\infty(U')}\le C t^{-\frac{1+s_1}{2}}\Big\{ t\cdot t^{-(\kappa_3+1)} 
(1+\|\bar p_0\|_{L^\alpha})^{2(\kappa_5+1)}\\
\cdot \Big(1+Env A(\alpha,t)^\frac{1}{\alpha-a} + \|\Psi\|_{L^\alpha(U\times(0,t))}\Big)^{2\kappa_5}
\Big( 1+\int_0^t G_1(\tau)d\tau\Big)\Big\}^{ s_3/2}\\
\le C t^{-\frac{1+s_1+\kappa_3 s_3}2} 
(1+\|\bar p_0\|_{L^\alpha})^{ s_3 (\kappa_5+1)}\\
\cdot \Big(1+Env A(\alpha,t)^\frac{1}{\alpha-a} + \|\Psi\|_{L^\alpha(U\times(0,t))}\Big)^{ s_3 \kappa_5}
\Big( 1+\int_0^t G_1(\tau)d\tau\Big)^{ s_3/2}.
\end{multline*}
Then \eqref{Gradps} follows.
Let $t\ge 2$. Applying \eqref{gradmain} with $T_0=t-3/4$, $T=3/4$, $\theta=2/3$, then 
 \beq\label{gradlarge}
\norm{\nabla p(t)}_{L^\infty(U')}\le C \Big( 1+\int_{t-1/2}^t \int_V |\nabla p|^{s_2} dx dt  \Big)^{ s_3/2}.
\eeq
Thanks to \eqref{gradps-b} with $s= s_2$, we obtain \eqref{GradpL}.
\end{proof}

Combining \eqref{gradlarge} with Theorem \ref{theo55}, we have the following asymptotic estimates.

\begin{theorem}\label{theo58}
{\rm (i)} If $A(\alpha)<\infty$ then 
\begin{multline}\label{ptwGradp}
\limsup_{t\to\infty}\norm{\nabla p(t)}_{L^\infty(U')}
\le C \Big (1+A(\alpha)^\frac{1}{\alpha-a}+\limsup_{t\to\infty} \|\Psi\|_{L^\alpha( U\times (t-1,t))}\Big)^{{ s_3}(\kappa_5+\alpha/2)}\\
\cdot\Big(1+ \limsup_{t\to\infty} \int_{t-1}^tG_1(\tau)d\tau\Big)^{ s_3/2}.
\end{multline}

{\rm (ii)} If $\beta(\alpha)<\infty$  then there is $T>0$ such that when $t>T$ we have
\begin{multline}\label{ptwGradpL}
\norm{\nabla p(t)}_{L^\infty(U')}
\le C \Big( 1 + \beta(\alpha)^\frac{1}{\alpha-2a} + \sup_{[t-3,t]}A(\alpha,\cdot)^\frac{1}{\alpha-a} 
 + \|\Psi\|_{L^\alpha( U\times (t-3,t) )}\Big)^{{ s_3}(\kappa_5+\alpha/2)}\\
\cdot \Big(1+\int_{t-2}^t G_1(\tau)d\tau\Big)^{ s_3/2}.
\end{multline}
\end{theorem}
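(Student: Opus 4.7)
The plan is to reduce this asymptotic $L^\infty$ estimate to the $L^{s_2}$-estimates of Theorem \ref{theo55} via the key inequality \eqref{gradlarge} that was already derived within the proof of Theorem \ref{theo57}. Recall that for $t\ge 2$, setting $T_0=t-3/4$, $T=3/4$, $\theta=2/3$ in \eqref{gradmain} produced
$$\norm{\nabla p(t)}_{L^\infty(U')}\le C\Big(1+\int_{t-1/2}^t \int_V |\nabla p|^{s_2}\,dx\,d\tau\Big)^{s_3/2},$$
where $V$ is any intermediate domain $U'\Subset V\Subset U$. This bound serves as the starting point, and both parts (i) and (ii) amount to controlling the double integral on the right using the pointwise-in-$t$ gradient estimates already established.

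For part (i), I would fix an auxiliary open set $V'$ with $V\Subset V'\Subset U$, then apply Theorem \ref{theo55}(i) to the pair $(V,V')$ with $s=s_2$ to obtain
$$\limsup_{\tau\to\infty}\int_V |\nabla p(x,\tau)|^{s_2}\,dx \le C\Big(1+A(\alpha)^{\frac1{\alpha-a}}+\limsup_{\tau\to\infty}\norm{\Psi}_{L^\alpha(U\times(\tau-1,\tau))}\Big)^{\mu_2+\alpha}\Big(1+\limsup_{\tau\to\infty}\int_{\tau-1}^\tau G_1\,d\sigma\Big).$$
Averaging this uniform bound over $\tau\in[t-1/2,t]$, taking limsup as $t\to\infty$ in \eqref{gradlarge}, and using the identity $\mu_2(\alpha,s_2)=2\kappa_5$ from \eqref{murel}, one obtains \eqref{ptwGradp} after raising to the power $s_3/2$; the exponent $s_3(\kappa_5+\alpha/2)$ matches since $(\mu_2+\alpha)\cdot(s_3/2)=(2\kappa_5+\alpha)(s_3/2)=s_3(\kappa_5+\alpha/2)$.

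For part (ii) the argument is entirely parallel but uses Theorem \ref{theo55}(ii): there exists $T>0$ such that for every $\tau\ge T$,
$$\int_V |\nabla p(x,\tau)|^{s_2}\,dx \le C\Big(1+\beta(\alpha)^{\frac1{\alpha-2a}}+\sup_{[\tau-2,\tau]}A(\alpha,\cdot)^{\frac1{\alpha-a}}+\norm{\Psi}_{L^\alpha(U\times(\tau-2,\tau))}\Big)^{\mu_2+\alpha}\Big(1+\int_{\tau-1}^\tau G_1\,d\sigma\Big).$$
Integrating over $\tau\in[t-1/2,t]$ enlarges the backward windows $[\tau-2,\tau]$ to $[t-5/2,t]\subset [t-3,t]$ and $[\tau-1,\tau]$ to $[t-3/2,t]\subset [t-2,t]$, which is precisely the form appearing in the statement. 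Substituting into \eqref{gradlarge} and raising to $s_3/2$ produces \eqref{ptwGradpL} with the same exponent computation as in (i).

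The only real obstacle is the bookkeeping: one must correctly track how integration over $\tau\in[t-1/2,t]$ enlarges the backward time windows for $A(\alpha,\cdot)$, $\Psi$, and $G_1$; choose an intermediate domain $V'\Supset V$ so that Theorem \ref{theo55} is applicable on the pair $(V,V')$; and verify that after raising to $s_3/2$ all exponents agree with $s_3(\kappa_5+\alpha/2)$ and $s_3/2$ in the statement. No new analytical input is required beyond \eqref{gradlarge} and Theorem \ref{theo55}.
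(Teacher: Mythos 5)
Your proposal is correct and follows exactly the route the paper intends: the paper states Theorem \ref{theo58} with the one-line justification ``Combining \eqref{gradlarge} with Theorem \ref{theo55},'' and your argument is precisely that combination, carried out with the necessary bookkeeping (bounding $\limsup_{t\to\infty}\int_{t-1/2}^t\int_V|\nabla p|^{s_2}$ via the pointwise-in-$\tau$ bounds of Theorem \ref{theo55} applied to $V\Subset U$, enlarging the time windows accordingly, and using $\mu_2(\alpha,s_2)=2\kappa_5$ to match the exponent $s_3(\kappa_5+\alpha/2)$). No discrepancy.
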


\myclearpage

\section{Interior estimates for time derivative of pressure}
\label{Lpt-sec}
In this section, we estimate the $L^\infty$-norm of $p_t(x,t)$ for $t>0$.
Let $q=p_t$. Then
\beq\label{eqt} \frac{\partial q}{\partial t}=\nabla \cdot \big(K(|\nabla p|)\nabla p\big)_t.\eeq 

Using \eqref{eqt}, we first derive a local-in-time estimate for $L^\infty$-norm of $p_t$.

\begin{proposition} \label{ptInf} 
Let  $U'  \Subset V\Subset  U$. If $T_0\ge 0$, $T>0$ and $\theta\in(0,1)$, then
\beq\label{ptbound}
\sup_{[T_0+\theta T,T_0+T]}\| p_t\|_{L^\infty(U')}\le C\lambda^\frac{s_1}{2} (1+(\theta T)^{-1} )^\frac{s_1+1}{2} \| p_t\|_{L^2(U\times (T_0,T_0+T))},
\eeq
where $s_1$ and $\lambda=\lambda(T_0,T,\theta;V)$ are defined in Theorem \ref{GradUni},
and constant $C>0$ is independent of $T_0$, $T$, and $\theta$.
\end{proposition}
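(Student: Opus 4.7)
The plan is to mirror the De Giorgi iteration from Theorem \ref{GradUni}, exploiting the fact that $q=p_t$ satisfies an equation of the same structural form as the gradient components $u_m$. Indeed, expanding \eqref{eqt} via the chain rule yields
\begin{equation*}
q_t = \nabla\cdot(K(|\nabla p|)\nabla q) + \nabla\cdot\Bigl[K'(|\nabla p|)\frac{\nabla p\cdot\nabla q}{|\nabla p|}\nabla p\Bigr],
\end{equation*}
which is precisely \eqref{um} with $u_m$ replaced by $q$. Therefore every step of the proof of Theorem \ref{GradUni} transfers.

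Concretely, I would fix a product cut-off $\zeta(x,t)=\phi(x)\varphi(t)$ with $\varphi(T_0)=0$ and $\supp\phi\Subset U$, set $q^{(k)}=\max\{q-k,0\}$, multiply the displayed equation by $q^{(k)}\zeta^2$, and integrate over $U$. The anisotropic term is handled exactly as in the derivation of \eqref{cut5}: property \eqref{K-est-2} absorbs the contribution from $K'$ into the good $K(|\nabla p|)|\nabla q^{(k)}|^2\zeta^2$ term, and the remaining cross term is dispatched by Cauchy's inequality. Integrating in $t$ from $T_0$ (where $\zeta$ vanishes) to any later time and taking suprema, I obtain the direct analogue of \eqref{cut6}. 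Applying Lemma \ref{WSobolev} to $q^{(k)}\zeta$ with weight $W=K(|\nabla p|)$ and exponents $r=s_0$, $\varrho=\nu_2$, and using the bound $K(|\nabla p|)^{-s_0/(2-s_0)}\le C(1+|\nabla p|)^{as_0/(2-s_0)}$, the resulting weight integral contributes exactly the factor $\lambda(T_0,T,\theta;V)^{1/\nu_2}$, yielding the analogue of \eqref{umk-Ls*}.

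Finally, I would run the same geometric setup as in \eqref{indexSeq2}--\eqref{cutoffBound2}: levels $k_i=M_0(1-2^{-i})$, radii $\rho_i=\tfrac14\rho(1+2^{-i})$, times $t_i=T_0+\theta T(1-2^{-i})$, and nested cut-offs $\zeta_i$ satisfying the usual bounds $|(\zeta_i)_t|\le C2^i/(\theta T)$ and $|\nabla\zeta_i|\le C2^i/\rho$. Setting $Y_i=\|q^{(k_i)}\|_{L^2(A_{i,i})}$, where $A_{i,j}$ is defined in analogy with \eqref{setDef2} using $q$ in place of $u_m$, the combination of the energy inequality with the weighted Sobolev embedding and H\"older's inequality produces a recursion $Y_{i+1}\le DB^{i}Y_i^{1+\nu_3}$ with $\nu_3=1-2/\nu_2$, $B=4$, and $D=C\lambda^{1/\nu_2}(1+(\theta T)^{-1})^{1/2}M_0^{-\nu_3}$. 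Choosing
\begin{equation*}
M_0 = C\lambda^{s_1/2}\bigl(1+(\theta T)^{-1}\bigr)^{(s_1+1)/2}\|p_t\|_{L^2(U\times(T_0,T_0+T))}
\end{equation*}
satisfies the smallness hypothesis of Lemma \ref{multiseq}, since $Y_0\le\|p_t\|_{L^2(U\times(T_0,T_0+T))}$. A finite covering of $\bar U'$ by balls of radius $\rho/4$, together with the same argument applied to $-q$, then yields \eqref{ptbound}. There is no substantive obstacle: the whole argument rests on the structural match between \eqref{eqt} and \eqref{um}, and the only minor change from Theorem \ref{GradUni} is that $\varphi$ now vanishes at $t=T_0$ rather than at $t=0$, which removes any need for information on $p_t(\cdot,T_0)$.
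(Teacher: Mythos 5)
Your proof is correct and follows essentially the same route as the paper's: the paper also multiplies \eqref{eqt} by $q^{(k)}\zeta^2$, absorbs the $K'$ contribution via \eqref{K-est-2}, applies Lemma \ref{WSobolev} with weight $K(|\nabla p|)$ to land the factor $\lambda^{1/\nu_2}$, and then explicitly notes that the resulting bound ``is similar to inequality \eqref{umk-Ls*}'' and concludes ``by following arguments of Theorem~\ref{GradUni} applied for $q^{(k)}$ instead of $u_m^{(k)}$.'' Your only cosmetic departure is to first expand \eqref{eqt} into the exact structural twin of \eqref{um} before citing that proof, whereas the paper keeps $\nabla\cdot(K(|\nabla p|)\nabla p)_t$ unexpanded and performs the algebraic cancellation $(\nabla p)_t\cdot z=|\nabla(q^{(k)}\zeta)|^2-|q^{(k)}\nabla\zeta|^2$ directly; the resulting energy inequality and iteration are the same.
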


\begin{proof} Without loss of generality, assume $T_0=0$.
For $k\ge 0$, let $ q^{(k)}=\max\{ q-k,0\}$
and $ S_k(t)=\{x \in U: q(x,t)>k\}$,
and $\chi_k(x,t)$  be the characteristic function of set $\{(x,t) \in U\times(0,T):  q(x,t)>k\}$.
On $S_k(t)$,  we have $(\nabla p)_t=\nabla q=\nabla q^{(k)}$. 

Let $\zeta=\zeta(x,t)$ be the cut-off function on $U\times [0,T]$ satisfying $\zeta(\cdot,0)=0$ and  $\zeta(\cdot,t)$ having compact support in $U$. 
We will use test function $ q^{(k)}\zeta^2$, noting that $\nabla ( q^{(k)}\zeta^2) =\zeta [\nabla ( q^{(k)}\zeta)+ q^{(k)}\nabla \zeta]$. Multiplying \eqref{eqt} by $ q^{(k)}\zeta^2 $ and integrating the resultant on $U$, we get 
\beq\label{newadd}
\begin{aligned}
\frac 12\ddt \int_U |q^{(k)}\zeta|^2 dx & 
= \int_U |q^{(k)}|^2 \zeta \zeta_t  dx - \int_U (K(|\nabla p|))_t\nabla p \cdot[ \nabla (q^{(k)}\zeta)+ q^{(k)}\nabla \zeta]\zeta  dx \\
& \quad 
- \int_U K(|\nabla p|) (\nabla p)_t \cdot [ \nabla (q^{(k)}\zeta)+ q^{(k)}\nabla\zeta]\zeta  dx. 
\end{aligned}
\eeq

For the last integral of \eqref{newadd}, put $z=\zeta[\nabla (q^{(k)}\zeta)+ q^{(k)}\nabla \zeta]$.
We have
\beqs
(\nabla p)_t \cdot z 
=\zeta \nabla q^{(k)}\cdot [\nabla (q^{(k)}\zeta)+ q^{(k)}\nabla \zeta]
= |\nabla (q^{(k)}\zeta)|^2 - |q^{(k)} \nabla \zeta|^2.
\eeqs

For the second integral on the right-hand side of \eqref{newadd}, taking into account \eqref{K-est-2},   
\begin{align*}
|(K(|\nabla p|))_t\nabla p \cdot z|\
&=|K'(|\nabla p|)| \frac{|\nabla p\cdot \nabla p_t|}{|\nabla p|} |\nabla p \cdot z| 
\le a K(|\nabla p|) |\nabla q||z|.
\end{align*}
Note that  
\begin{align*}
|\nabla q| |z|
&=|\zeta \nabla\bar  q^{(k)}| | \nabla (q^{(k)}\zeta)+ q^{(k)}\nabla \zeta |\le \big\{|\nabla (q^{(k)}\zeta)| + |q^{(k)}||\nabla \zeta|\big\}^2 \\
&= |\nabla (q^{(k)}\zeta)|^2 + 2|q^{(k)}| |\nabla \zeta| |\nabla (q^{(k)}\zeta)| + |q^{(k)} \nabla \zeta|^2,
\end{align*}
and by applying Cauchy's inequality to the second to last term
\begin{align*}
a|\nabla q| |z|
\le a |\nabla (q^{(k)}\zeta)|^2 + \frac {1-a}{2} |\nabla(q^{(k)}\zeta )|^2 + \frac{2a^2}{1-a} |\bar  q^{(k)} \nabla \zeta|^2 + a|q^{(k)} \nabla \zeta|^2.
\end{align*}

It follows the above calculations that
\begin{align*}
\ddt \int_U |q^{(k)} \zeta|^2 dx &+ (1-a) \int_U K(|\nabla p|) |\nabla (q^{(k)} \zeta)|^2 dx \\
&\quad  \le 2\int_U |q^{(k)}|^2 \zeta  |\zeta_t| dx + C \int_U K(|\nabla p|)|q^{(k)} \nabla \zeta|^2 dx.
\end{align*}
Integrating this inequality from $0$ to $T$, we obtain 
\begin{align*}
 \max_{[0,T]} \int_U |q^{(k)}\zeta |^2 dx &+ \int_0^T \int_U K(|\nabla p|) |\nabla (q^{(k)}\zeta) |^2 dx dt\\
 &\le C \Big [\int_0^T \int_U |q^{(k)}|^2 \zeta |\zeta_t| dx dt 
 + \int_0^T\int_U K(|\nabla p|) |q^{(k)} \nabla \zeta|^2 dxdt   \Big]\\
& \le C\int_0^T  \int_U |q^{(k)}|^2 (\zeta |\zeta_t|  +|\nabla \zeta|^2) dxdt .
 \end{align*}
 The last inequality uses the fact that function $K(\cdot)$ is bounded above.
 Applying Lemma \ref{WSobolev} to $q^{(k)} \zeta$ with $W=K(|\nabla p|)$ and $\varrho=\nu_2$ defined by \eqref{nu2}, we have 
  \begin{align*}
&   \norm{q^{(k)}\zeta}_{L^{\nu_2}(Q_T)}
 \le C \lambda^{1/\nu_2} 
  \cdot \left\{\max_{[0,T]} \int_U |q^{(k)}\zeta |^2 dx + \int_0^T \int_U K(|\nabla p|)|\nabla (q^{(k)}\zeta)|^2 dx dt \right\}^{1/2}.
 \end{align*} 
 and hence, 
  \beq\label{qzetabound}
  \begin{split}
  \norm{q^{(k)}\zeta}_{L^{\nu_2}(Q_T)} &\le C\lambda^{1/\nu_2}  \Big\{\int_0^T  \int_U  |q^{(k)}|^2 (|\zeta_t|\zeta + |\nabla \zeta|^2)dx dt \Big\}^{1/2}.
   \end{split}
   \eeq
This is similar to inequality \eqref{umk-Ls*}. Then by following arguments of Theorem~\ref{GradUni} applied for $q^{(k)}$ instead of $u_m^{(k)}$, we obtain \eqref{ptbound}. The proof is complete. 
\end{proof}

The next theorem contains the estimates in terms of the initial and boundary data for all $t>0$.
Let 
\begin{align*}
\kappa_6&= 1+s_1+\kappa_3(s_3-1),\quad \kappa_7= (s_3-1)(\kappa_5+1)+1,\\
\kappa_8&= (s_3-1) \kappa_5,\quad \kappa_9= (s_3-1)(\kappa_5+\alpha/2)+\alpha/2.
\end{align*}

\begin{theorem}\label{ptthm} 
Let $U'\Subset U$. For $t\in(0,2)$,
\beq\label{i66}
\begin{aligned}
 \| p_t(t)\|_{L^\infty(U')}
  &\le C  t^{-\kappa_6/2} \Big(1+\|\bar{p}_0\|_{L^\alpha}\Big)^{\kappa_7} \Big(1+ \int_U H(|\nabla p_0(x)|)dx \Big)^{1/2}.\\
&\quad\cdot \Big(1+ [ Env A(\alpha,t)]^\frac{1}{\alpha-a}+ \|\Psi\|_{L^\alpha(U\times(0,t))}\Big )^{\kappa_8}
\Big(1+\int_0^t G_3(\tau)d\tau)\Big)^{ s_3/2}.
\end{aligned}
\eeq
For $t\ge 2$,
\beq\label{i67}
\begin{aligned}
\| p_t(t)\|_{L^\infty(U')}
&\le C (1+\|\bar p_0\|_{L^\alpha})^{\kappa_9}  \Big(1+ [ Env A(\alpha,t)]^\frac{1}{\alpha-a}+ \|\Psi\|_{L^\alpha(U\times(t-2,t))}\Big )^{\kappa_9}\\
&\quad \cdot \Big( 1+\int_{t-1}^t G_3(\tau)d\tau\Big)^{ s_3/2}.
\end{aligned}
\eeq
\end{theorem}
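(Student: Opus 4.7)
The plan mirrors the structure of Theorem~\ref{theo57}: apply the pointwise-in-time estimate of Proposition~\ref{ptInf} to reduce everything to controlling (a) the spatial-time weight $\lambda$, and (b) the $L^2$-norm of $p_t$ on a short time window; then insert the integrability bounds already developed for the gradient and for $\bar p_t$.

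First I would fix $U'\Subset V_0\Subset V\Subset U$ and apply Proposition~\ref{ptInf} with the choices $T_0=0$, $T=t$, $\theta=1/2$ in the small-time regime, and $T_0=t-3/4$, $T=3/4$, $\theta=2/3$ in the large-time regime, so that $(\theta T)^{-1}$ behaves like $t^{-1}$ (resp.\ like a constant). This yields
\begin{equation*}
\|p_t(t)\|_{L^\infty(U')}\le C\,\lambda^{s_1/2}\bigl(1+(\theta T)^{-1}\bigr)^{(s_1+1)/2}\,\|p_t\|_{L^2(U\times(T_0,T_0+T))},
\end{equation*}
where $\lambda$ is computed on $V\times(T_0+\theta T/2,T_0+T)$. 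Because $s_2\ge as_0/(2-s_0)$, one has $(1+|\nabla p|)^{as_0/(2-s_0)}\le (1+|\nabla p|)^{s_2}$, so
\begin{equation*}
\lambda^{s_1/2}\le C\Bigl(\int_{T_0+\theta T/2}^{T_0+T}\!\!\!\int_V(1+|\nabla p|^{s_2})\,dxd\tau\Bigr)^{(s_3-1)/2},
\end{equation*}
using the identity $s_1(2-s_0)/s_0=s_3-1$.

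For the small-time case, I would bound the $|\nabla p|^{s_2}$ integral by time-integrating the pointwise estimate \eqref{gradps-c} of Theorem~\ref{gradsthm} from $t/4$ to $t$, using $\mu_1(\alpha,s_2)=\kappa_3+1$ and $\mu_2(\alpha,s_2)=2\kappa_5$. Since the non-initial factors on the right of \eqref{gradps-c} are monotone in time, the only time-singular piece is $\int_{t/4}^{t}\tau^{-(\kappa_3+1)}d\tau\le C\,t^{-\kappa_3}$. This produces
\begin{equation*}
\lambda^{s_1/2}\le C\,t^{-\kappa_3(s_3-1)/2}\,(1+\|\bar p_0\|_{L^\alpha})^{(\kappa_5+1)(s_3-1)}\bigl(1+[Env A(\alpha,t)]^{1/(\alpha-a)}+\|\Psi\|_{L^\alpha(U\times(0,t))}\bigr)^{\kappa_5(s_3-1)}\Bigl(1+\int_0^tG_1\Bigr)^{(s_3-1)/2}.
\end{equation*}
For $\|p_t\|_{L^2}$ I would combine \eqref{intpt} with the trivial bound $\|\Psi_t\|_{L^2(U\times(0,t))}^2\le \int_0^t G_3$, and use H\"older in the $\int_U\bar p_0^2\le C\|\bar p_0\|_{L^\alpha}^2$ step (valid since $\alpha\ge 2$), to get
\begin{equation*}
\|p_t\|_{L^2(U\times(0,t))}\le C\,(1+\|\bar p_0\|_{L^\alpha})\,\Bigl(1+\int_U H(|\nabla p_0|)\,dx\Bigr)^{1/2}\Bigl(1+\int_0^tG_3\Bigr)^{1/2}.
\end{equation*}
Multiplying the three factors and observing $G_1\le G_3$, one gets precisely the exponents $\kappa_6/2=[(s_1+1)+\kappa_3(s_3-1)]/2$ on $t^{-1}$, $\kappa_7=(s_3-1)(\kappa_5+1)+1$ on $1+\|\bar p_0\|_{L^\alpha}$, $\kappa_8=(s_3-1)\kappa_5$ on the boundary-data factor, and $s_3/2=(s_3-1)/2+1/2$ on $1+\int_0^tG_3$, giving \eqref{i66}.

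The large-time case $t\ge 2$ proceeds identically, with two replacements: \eqref{gradps-b} of Theorem~\ref{gradsthm} is used in place of \eqref{gradps-c} on the interval $(t-1/2,t)$, and \eqref{all2} of Corollary~\ref{UGcor} is used in place of \eqref{intpt} for $\|p_t\|_{L^2(U\times(t-1,t))}$; the factor $(1+(\theta T)^{-1})^{(s_1+1)/2}$ is absorbed into the constant. The $\mu_2+\alpha=2\kappa_5+\alpha$ exponent from \eqref{gradps-b}, combined through $\lambda^{s_1/2}$, contributes $(\kappa_5+\alpha/2)(s_3-1)$, while the square root of \eqref{all2} contributes an additional $\alpha/2$, yielding exactly $\kappa_9=(s_3-1)(\kappa_5+\alpha/2)+\alpha/2$ as desired, and \eqref{i67} follows.

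The main obstacle is purely bookkeeping: one must verify that the various exponents ($\mu_1$, $\mu_2$, the $(2-s_0)/s_0$ exponent in $\lambda$, the $s_1/2$ power outside $\lambda$, and the $1/2$ power used to extract $\|p_t\|_{L^2}$ from its squared estimate) assemble correctly into the $\kappa_j$'s of the statement. No new PDE argument is needed beyond careful time-integration of previously established pointwise inequalities and the uniform Gronwall output of Corollary~\ref{UGcor}.
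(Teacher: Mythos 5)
Your proposal follows essentially the same route as the paper's proof: apply Proposition~\ref{ptInf}, control $\lambda^{s_1/2}$ by time-integrating the pointwise gradient bounds \eqref{gradps-c}/\eqref{gradps-b} (using $s_2\ge as_0/(2-s_0)$, $\mu_1(\alpha,s_2)=\kappa_3+1$, $\mu_2(\alpha,s_2)=2\kappa_5$, and $\nu_4=s_3-1$), control $\|p_t\|_{L^2}$ via $p_t=\bar p_t+\Psi_t$, \eqref{intpt} and \eqref{all2}, then absorb $G_1\le G_3$ and $\int_U|\Psi_t|^2\le G_3$ to assemble $\kappa_6,\kappa_7,\kappa_8,\kappa_9$ and the $s_3/2$ power. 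The only differences are cosmetic choices of $T_0,T,\theta$ in the large-time window (the paper uses $T_0=t-1,\ T=1,\ \theta=1/2$; you use $T_0=t-3/4,\ T=3/4,\ \theta=2/3$, which is what Theorem~\ref{theo57} uses), and your bookkeeping of the exponents matches the paper's.
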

\begin{proof}
Let $t\in(0,2)$, apply \eqref{ptbound} with $T_0=0$, $T=t$ and $\theta=1/2$. By \eqref{lambda} and \eqref{new0}, we have
\begin{align*}
\lambda^{s_1/2} & = \Big( \int_{t/2}^{t} \int_V (1+|\nabla p|)^{\frac {a s_0}{2-s_0}}dx d\tau  \Big)^\frac{(2-s_0)s_1}{2 s_0}
\le \Big( \int_{t/2}^{t} \int_V (1+|\nabla p|)^{s_2} dx d\tau  \Big)^\frac{\nu_4}{2},
\end{align*}
where
\beq \label{nu4}
\nu_4=(2-s_0)s_1/s_0= s_3-1.
\eeq
By \eqref{gradps-c} and relation \eqref{murel},
\begin{align*}
\lambda^{s_1/2}&\le C \Big\{ t\cdot t^{-(\kappa_3+1)}(1+\|\bar p_0\|_{L^\alpha})^{2(\kappa_5+1)} \Big(1+ [ Env A(\alpha,t)]^\frac{1}{\alpha-a}+ \|\Psi\|_{L^\alpha(U\times(0,t))}\Big )^{2\kappa_5}\\
&\quad\quad \cdot\Big( 1+\int_0^t G_1(\tau)d\tau\Big)\Big\}^{\nu_4/2}
\le C t^\frac{-\kappa_3\nu_4}{2}(1+\|\bar p_0\|_{L^\alpha})^{(\kappa_5+1)\nu_4}\\
&\quad\quad \cdot  \Big(1+ [ Env A(\alpha,t)]^\frac{1}{\alpha-a}+ \|\Psi\|_{L^\alpha(U\times(0,t))}\Big )^{\kappa_5\nu_4}
\Big( 1+\int_0^t G_1(\tau)d\tau\Big)^{\nu_4/2}.
\end{align*}
Combining this with \eqref{ptbound} and \eqref{intpt} gives
\begin{align*}
& \| p_t(t)\|_{L^\infty(U')}
\le C\lambda^\frac{s_1}{2}  t^{-\frac{1+s_1}{2}} \| p_t\|_{L^2(U\times (0,t))}\\
 &\le C t^\frac{-\kappa_3\nu_4-(1+s_1)}{2}(1+\|\bar p_0\|_{L^\alpha})^{(\kappa_5+1)\nu_4} \Big(1+ [ Env A(\alpha,t)]^\frac{1}{\alpha-a}+ \|\Psi\|_{L^\alpha(U\times(0,t))}\Big )^{\kappa_5\nu_4}\\
&\quad \cdot\Big( 1+\int_0^t G_1(\tau)d\tau\Big)^{\nu_4/2}\\
&\quad \cdot\Big( \int_U [H(|\nabla p_0(x)|)+\bar p^2_0(x)] dx +\int_0^t G_3(\tau)d\tau +\int_0^t \int_U |\Psi_t(x,\tau)|^2dxd\tau\Big)^{1/2}.
\end{align*}
Therefore,
\begin{align*}
 \| p_t(t)\|_{L^\infty(U')}
  &\le C  t^{-\kappa_6/2} \Big(1+\|\bar{p}_0\|_{L^\alpha}\Big)^{(\kappa_5+1)\nu_4+1} \Big(1+ \int_U H(|\nabla p_0(x)|)dx \Big)^{1/2}.\\
&\quad\cdot \Big(1+ [ Env A(\alpha,t)]^\frac{1}{\alpha-a}+ \|\Psi\|_{L^\alpha(U\times(0,t))}\Big )^{\kappa_5\nu_4}
\Big(1+\int_0^t G_3(\tau)d\tau)\Big)^{(\nu_4+1)/2}.
\end{align*}
We obtain \eqref{i66}.

Now consider $t\ge 2$. We apply \eqref{ptbound} with $T_0=t-1$, $T=1$ and $\theta=1/2$.
Then by \eqref{lambda} and \eqref{gradps-b},
\begin{align*}
\lambda^{s_1/2}
&\le C \Big( \int_{t-1}^{t} \int_V (1+|\nabla p|)^ {s_2} dx d\tau  \Big)^{\nu_4/2}\\
&\le C (1+\|\bar p_0\|_{L^\alpha})^{\nu_4(\kappa_5+\alpha/2)} \Big(1+ [ Env A(\alpha,t)]^\frac{1}{\alpha-a}+ \|\Psi\|_{L^\alpha(U\times(t-2,t))}\Big )^{\nu_4(\kappa_5+\alpha/2)}\\
&\quad \cdot \Big( 1+\int_{t-1}^t G_1(\tau)d\tau\Big)^{\nu_4/2}.
\end{align*}
Combining this with \eqref{ptbound} and \eqref{all2} gives
\begin{align*}
 &\| p_t(t)\|_{L^\infty(U')}
 \le C \lambda^\frac{s_1}{2} \| p_t\|_{L^2(U\times (t-1/2,t))}\\
&\le C (1+\|\bar p_0\|_{L^\alpha})^{\nu_4(\kappa_5+\alpha/2)}  \Big(1+ [ Env A(\alpha,t)]^\frac{1}{\alpha-a}+ \|\Psi\|_{L^\alpha(U\times(t-2,t))}\Big )^{\nu_4 (\kappa_5+\alpha/2)}\\
&\quad \cdot \Big( 1+\int_{t-1}^t G_1(\tau)d\tau\Big)^{\nu_4/2} \cdot \Big(1+ \int_U|\bar{p}_0(x)|^{\alpha} dx
+ [ Env A(\alpha,t)]^\frac{\alpha}{\alpha-a} + \int_{t-1}^tG_3(\tau)d\tau\Big )^{1/2}.
\end{align*}
Thus, 
\begin{multline*}
\| p_t(t)\|_{L^\infty(U')}
\le C (1+\|\bar p_0\|_{L^\alpha})^{\nu_4(\kappa_5+\alpha/2)+\alpha/2}\\
\cdot  \Big(1+ [ Env A(\alpha,t)]^\frac{1}{\alpha-a}+ \|\Psi\|_{L^\alpha(U\times(t-2,t))}\Big )^{\nu_4(\kappa_5+\alpha/2)+\alpha/2}
 \Big( 1+\int_{t-1}^t G_3(\tau)d\tau\Big)^{\nu_4/2+1/2},
\end{multline*}
and we obtain \eqref{i67}. The proof is complete.
\end{proof}

For large time or asymptotic estimates, we have:

\begin{theorem}\label{ptlimthm}
Let $U'\Subset U$. 

 {\rm (i)} If $ A(\alpha)<\infty$ then
\beq\label{lim7} 
\begin{aligned}
\limsup_{t\to\infty} \| p_t(t)\|_{L^\infty(U')}
& \le C \Big (1+A(\alpha)^\frac{1}{\alpha-a}+\limsup_{t\to\infty} \|\Psi\|_{L^\alpha( U\times (t-1,t))}\Big)^{\kappa_9}\\
&\quad\cdot \Big(1+ \limsup_{t\to\infty} \int_{t-1}^tG_3(\tau)d\tau\Big)^{ s_3/2}.\end{aligned}
\eeq
 
{\rm (ii)} If $\beta(\alpha)<\infty$ then there is $T>0$ such that for all $t>T$,
 \beq\label{large7}
\begin{aligned}
  \| p_t(t)\|_{L^\infty(U')}
 &\le C \Big( 1 + \beta(\alpha)^\frac{1}{\alpha-2a} + \sup_{[t-3,t]}A(\alpha,\cdot)^\frac{1}{\alpha-a} 
 + \|\Psi\|_{L^\alpha( U\times (t-3,t) )}\Big)^{\kappa_9}\\
&\quad \cdot\Big(   1+\int_{t-2}^tG_3(\tau)d\tau\Big)^{ s_3/2}.
\end{aligned}
\eeq
 \end{theorem}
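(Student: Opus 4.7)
The plan is to repeat the large-time strategy already used in the proof of Theorem~\ref{ptthm} for $t \ge 2$, but to replace the uniform-in-initial-data ingredients with the asymptotic ones supplied by Corollary~\ref{UGcor} and Theorem~\ref{theo55}. Fix $t$ sufficiently large and apply Proposition~\ref{ptInf} with $T_0 = t-1$, $T = 1$, $\theta = 1/2$ to obtain the pointwise-in-$t$ estimate
\beqs
\| p_t(t)\|_{L^\infty(U')} \le C\,\lambda^{s_1/2}\,\| p_t\|_{L^2(U\times (t-1,t))},
\eeqs
where $\lambda = \lambda(t-1,1,1/2;V)$. Since $as_0/(2-s_0)\le s_2$ by the very definition of $s_2$, the weight $\lambda$ is controlled by $\int_{t-1/2}^{t}\int_V (1+|\nabla p|)^{s_2}\,dx\,d\tau$ raised to $(2-s_0)/s_0$, so $\lambda^{s_1/2}$ is dominated by that space--time integral raised to the power $\nu_4/2 = (s_3-1)/2$, cf. \eqref{nu4}.

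For part (i), take $\limsup$ as $t\to\infty$ and use $\limsup(fg)\le(\limsup f)(\limsup g)$ for nonnegative $f,g$. The spatial integral $\int_V |\nabla p(\tau)|^{s_2}dx$ is bounded pointwise in $\tau\in[t-1/2,t]$ by \eqref{LsupGradp-s} applied with $s=s_2$; integrating trivially in $\tau$ (the interval has length $1/2$) and noting the identity $\mu_2(\alpha,s_2) = 2\kappa_5$ from \eqref{murel}, the gradient factor contributes a data exponent of $2\kappa_5+\alpha$ and a $G_1$-exponent of $1$. The remaining factor $\|p_t\|_{L^2(U\times(t-1,t))}$ is the square root of the quantity estimated in \eqref{lim4}, which contributes (after splitting $(1+A^{\alpha/(\alpha-a)})^{1/2}\sim (1+A^{1/(\alpha-a)})^{\alpha/2}$) an extra $\alpha/2$ in the data exponent and an extra $1/2$ in the $G_3$-exponent. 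Multiplying the two factors and using $G_1\le G_3$ gives exactly the exponents $(s_3-1)(\kappa_5+\alpha/2)+\alpha/2 = \kappa_9$ on the data and $(s_3-1)/2+1/2 = s_3/2$ on the $G_3$-integral, yielding \eqref{lim7}.

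For part (ii), the argument is formally identical: replace \eqref{LsupGradp-s} by \eqref{Gradp-sL} (which is a pointwise-in-$t$ bound, valid for all $t\ge T$) and \eqref{lim4} by \eqref{large4}. The time-window $[t-3,t]$ for the $\Psi$-norm and $A(\alpha,\cdot)$-supremum in \eqref{ptwGradpL} is inherited from the gradient bound, while the integral $\int_{t-2}^t G_3(\tau)d\tau$ in \eqref{large7} arises because the gradient step uses $G_1$ on $[t-2,t]$ (dominated by $G_3$) and the $p_t$ step uses $G_3$ on $[t-1,t]$.

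No new analytic ingredient is needed; the main obstacle is the exponent bookkeeping that ties the parameters $s_1,s_2,s_3,\nu_4,\kappa_5,\kappa_9$ together consistently with \eqref{mu12} and \eqref{murel}. Once this is verified as above, the pointwise bound from Proposition~\ref{ptInf} combined with the two asymptotic ingredients delivers both \eqref{lim7} and \eqref{large7} immediately.
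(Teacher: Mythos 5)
Your proposal follows essentially the same route as the paper: apply \eqref{ptbound} with a unit window, dominate $\lambda^{s_1/2}$ by the $s_2$-power gradient integral raised to $\nu_4/2$, and then feed in \eqref{LsupGradp-s} (resp.\ \eqref{Gradp-sL}) for the gradient factor and \eqref{lim4} (resp.\ \eqref{large4}) for the $L^2$-in-time-and-space norm of the time derivative; the exponent bookkeeping you carry out is correct and reproduces $\kappa_9$ and $s_3/2$. The one small imprecision is that \eqref{lim4} and \eqref{large4} estimate $\bar p_t = p_t - \Psi_t$, not $p_t$, so you must split $p_t = \bar p_t + \Psi_t$ and absorb $\limsup_{t\to\infty}\int_{t-1}^t\int_U|\Psi_t|^2$ (resp.\ $\int_{t-1}^t\int_U|\Psi_t|^2$) into the $A(\alpha)^{\alpha/(\alpha-a)}$ factor via $\int_U|\Psi_t|^2\,dx\le C(\int_U|\Psi_t|^\alpha\,dx)^{2/\alpha}\le C(1+A(\alpha,t)^{\alpha/(\alpha-a)})$ — this is exactly what the paper's proof does, and it does not change the final exponents.
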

\begin{proof}
(i)  For large $t$, we apply \eqref{ptbound} with $T_0=t-1$, $T=1$ and $\theta=1/2$. We have
 \beq\label{ptlarge}
 \| p_t(t)\|_{L^\infty(U')}
 \le C \lambda^{s_1/2} \| p_t\|_{L^2(U\times (t-1/2,t))}
\le C \Big( \int_{t-1}^{t} \int_V (1+|\nabla p|)^{s_2} dx d\tau  \Big)^{\nu_4/2} \| p_t\|_{L^2(U\times (t-1/2,t))},
\eeq
where $\nu_4$ is defined by \eqref{nu4}. Take the limit superior and using \eqref{LsupGradp-s} with $s= s_2$, and \eqref{lim4}, we obtain
 \begin{align*}
&\limsup_{t\to\infty} \| p_t(t)\|_{L^\infty(U')}\\
& \le C \Big (1+A(\alpha)^\frac{1}{\alpha-a}+\limsup_{t\to\infty} \|\Psi\|_{L^\alpha( U\times (t-1,t))}\Big)^{\nu_4(\kappa_5+\alpha/2)}
\Big(1+ \limsup_{t\to\infty} \int_{t-1}^tG_1(\tau)d\tau\Big)^{\nu_4/2}\\
&\quad \cdot \Big (1+A(\alpha)^\frac {\alpha} {\alpha-a} +\limsup_{t\to\infty}\int_{t-1}^t \int_U |\Psi_t(x,\tau)|^2dxd\tau+\limsup_{t\to\infty} \int_{t-1}^tG_3(\tau)d\tau\Big)^{1/2}\\
& \le C \Big (1+A(\alpha)^\frac{1}{\alpha-a}+\limsup_{t\to\infty} \|\Psi\|_{L^\alpha( U\times (t-1,t))}\Big)^{\nu_4(\kappa_5+\alpha/2)+\alpha/2}
\Big(1+ \limsup_{t\to\infty} \int_{t-1}^tG_3(\tau)d\tau\Big)^{\nu_4/2+1/2}.
\end{align*}
We obtain \eqref{lim7}.

(ii) Using \eqref{ptlarge}, \eqref{Gradp-sL},  and \eqref{large4}, we obtain
 \begin{align*}
& \| p_t(t)\|_{L^\infty(U')}
 \le C \Big( 1 + \beta(\alpha)^\frac{1}{\alpha-2a} + \sup_{[t-3,t]}A(\alpha,\cdot)^\frac{1}{\alpha-a} 
 + \|\Psi\|_{L^\alpha( U\times (t-3,t) )}\Big)^{\nu_4(\kappa_5+\alpha/2)}
\Big(1+\int_{t-2}^t G_1(\tau)d\tau\Big)^{\nu_4/2}\\
&\quad \cdot\Big (1+\beta(\alpha)^\frac {\alpha}{\alpha-2a} + A(\alpha,t-1)^\frac{\alpha}{\alpha-a}+\int_{t-1}^t \int_U |\Psi_t(x,\tau)|^2dxd\tau +\int_{t-1}^tG_3(\tau)d\tau\Big )^{1/2}.
\end{align*}
Therefore \eqref{large7} follows.
\end{proof}

\myclearpage
\section{Interior estimates for pressure's Hessian}
\label{Hess-sec}
In this section, we estimate the $L^2$-norm of the Hessian $\nabla^2 p=(p_{x_ix_j})_{i,j=1,2,\ldots,n}$.

\begin{lemma}\label{HessLem} Let $U'\Subset V \Subset U$.
For $t>0$,
\beq\label{mainHess}
\norm{\nabla^2 p(t)}_{L^2(U')}\le C 
 \Big(1+\norm{\nabla p(t)}_{L^\infty(V) }\Big)^a \Big(\int_U [|\nabla p|^{2-a}+|p_t|^2]dx\Big)^{1/2}.
\eeq
\end{lemma}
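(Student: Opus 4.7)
Since the bound at time $t$ involves $p_t$, we view the PDE at fixed $t$ as a quasilinear elliptic equation with right-hand side $p_t$, and derive an interior $W^{2,2}$-estimate by the differentiate-and-test method applied to the gradient equation \eqref{um} for $u_m=p_{x_m}$. Fix a spatial cutoff $\zeta\in C_c^\infty(U)$ with $\zeta\equiv 1$ on $U'$, $\supp\zeta\subset V$, and $|\nabla\zeta|\le C$ depending on $\mathrm{dist}(U',\partial V)$. Multiply \eqref{um} by $u_m\zeta^2$, integrate over $U$, and sum over $m=1,\ldots,n$. On the time-derivative side, $\sum_m\int u_{m,t}u_m\zeta^2\,dx=\sum_m\int(\partial_m p_t)u_m\zeta^2\,dx$; integration by parts in $x_m$ (the cutoff kills the boundary term) together with $\sum_m\partial_m u_m=\Delta p$ converts it to $-\int_U p_t\Delta p\,\zeta^2\,dx-2\int_U p_t\,\zeta\,\nabla p\cdot\nabla\zeta\,dx$, with no time derivatives of $p$ remaining.

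For the divergence side of \eqref{um}, integration by parts in $x_j$ together with the identities $\sum_m|\nabla u_m|^2=|\nabla^2 p|^2$ and $\sum_m(u\cdot\nabla u_m)^2=|\nabla^2 p\cdot\nabla p|^2$ produce the principal term
\[ \int_U\Big[K(|\nabla p|)|\nabla^2 p|^2+\frac{K'(|\nabla p|)}{|\nabla p|}\bigl|\nabla^2 p\cdot\nabla p\bigr|^2\Big]\zeta^2\,dx, \]
plus cross terms supported on $\nabla\zeta$. Because $K'\le 0$ and $|\nabla^2 p\cdot\nabla p|^2\le|\nabla^2 p|^2|\nabla p|^2$, the integrand is bounded below by $(K+K'|\nabla p|)|\nabla^2 p|^2\ge(1-a)K(|\nabla p|)|\nabla^2 p|^2$ by \eqref{K-est-2}. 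Applying Young's inequality to $\int p_t\Delta p\,\zeta^2\,dx$ via $|\Delta p|\le\sqrt n\,|\nabla^2 p|$ and to the three $\nabla\zeta$-supported cross terms (using $\sum_m|u_m\nabla u_m|\le|u||\nabla^2 p|$ and $|K'|/|u|\le aK/|u|^2$ to dispatch the anisotropic one), then absorbing the resulting $\varepsilon\int_U K|\nabla^2 p|^2\zeta^2\,dx$ into the coercive part, yields
\[ \int_U K(|\nabla p|)|\nabla^2 p|^2\zeta^2\,dx\le C\int_U\Big[\frac{|p_t|^2}{K(|\nabla p|)}+|p_t|^2+\bigl(1+K(|\nabla p|)\bigr)|\nabla p|^2|\nabla\zeta|^2\Big]dx. \]

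It remains to convert the $K$-weights into the stated $(1+\|\nabla p\|_{L^\infty(V)})^a$ factor. On $\supp\zeta\subset V$, \eqref{K-est-3} gives $K(|\nabla p|)^{-1}\le C(1+\|\nabla p\|_{L^\infty(V)})^a$, while \eqref{Kestn} gives $K(|\nabla p|)|\nabla p|^2\le C|\nabla p|^{2-a}$, and the bound $|\nabla p|^2\le(1+\|\nabla p\|_{L^\infty(V)})^a|\nabla p|^{2-a}$ handles the unweighted $|\nabla p|^2|\nabla\zeta|^2$ piece. Since $\zeta\equiv 1$ on $U'$ and $K(|\nabla p|)\ge C_1(1+\|\nabla p\|_{L^\infty(V)})^{-a}$ there, we extract
\[ \int_{U'}|\nabla^2 p|^2\,dx\le C\bigl(1+\|\nabla p\|_{L^\infty(V)}\bigr)^{2a}\int_U\bigl[|p_t|^2+|\nabla p|^{2-a}\bigr]dx, \]
and \eqref{mainHess} follows upon taking square roots. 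The main subtlety is the coercivity step: since $a$ may be close to $1$, a naive reduction to $(\Delta p)^2$ via $\int\zeta^2|\nabla^2 p|^2\le 2\int\zeta^2(\Delta p)^2+O(|\nabla\zeta|)$ combined with the pointwise bound $K|\Delta p|\le|p_t|+aK|\nabla^2 p|$ drawn directly from the PDE produces an unabsorbable factor $2a^2$; working with the full anisotropic operator on the gradient equation \eqref{um} is precisely what keeps the good coefficient $(1-a)$ on the energy side.
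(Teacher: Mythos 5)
Your proof is correct and follows essentially the same route as the paper: the paper applies its equation \eqref{gradid} (Lemma~\ref{new.it-L} at $s=0$) together with the identity \eqref{s0} to trade the time-derivative of $\int|\nabla p|^2\zeta^2$ for an expression in $p_t$, then uses Young's inequality and the $K^{\pm 1}$ bounds \eqref{K-est-3}--\eqref{Kestn}; your computation simply re-derives that coercivity from \eqref{um} explicitly rather than citing the lemma, and performs the integration by parts in $x_m$ directly rather than through $\tfrac{d}{dt}\int|\nabla p|^2\zeta^2$. The final remark about why one must keep the full anisotropic operator to preserve the $(1-a)K$ coefficient is a nice clarification of a point the paper leaves implicit.
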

\begin{proof}  
From \eqref{gradid} of Lemma~\ref{new.it-L} with  $\zeta=\zeta(x)$ being a cut-off function in space, we have
\beq\label{Hess}
\begin{aligned}
  \frac{1}{2s+2} \ddt \int_U |\nabla p|^{2s+2} \zeta^2 dx
+\frac{1-a}{2}  \int_U K(|\nabla p|) |\nabla^2 p|^2   |\nabla p|^{2s}  \zeta^2 dx\\
 \le  C \int_U K(|\nabla p|) |\nabla p|^{2s+2}|\nabla \zeta|^2 dx.
\end{aligned}
\eeq
Clearly, 
\beq\label{s0}
\frac{1}{2s+2} \ddt \int_U |\nabla p|^{2s+2} \zeta^2 dx= - \int_U  p_t  \nabla \cdot (|\nabla p|^{2s}\nabla p \zeta^2)  dx.
\eeq
Combining \eqref{Hess} and \eqref{s0} with $s=0$, we have 
\begin{align*}
 \int_U K(|\nabla p|) |\nabla^2 p|^2   \zeta^2 dx\le  C \int_U K(|\nabla p|) |\nabla p|^{2}|\nabla \zeta|^2 dx
+ C   \int_U | p_t \nabla \cdot (\nabla p \zeta^2)|  dx.
\end{align*}
Since
\begin{align*}
 | p_t \nabla \cdot (\nabla p \zeta^2)| 
&\le  | p_t| |\nabla ^2  p| \zeta^2 + 2| p_t||\nabla p| \zeta |\nabla \zeta| \\
&\le 1/2  K(|\nabla p|) |\nabla^2 p|^2     \zeta^2 
+C K(|\nabla p|) |\nabla p|^{2}|\nabla \zeta|^2 
+ C| p_t|^2  K^{-1}(|\nabla p|) \zeta^2, 
\end{align*}
and, by \eqref{Kestn}, $K^{-1}(\xi)\le C (1+\xi)^a$ we find that
\beq\label{Hess1}
\begin{aligned}
\int_U K(|\nabla p|) |\nabla^2 p|^2   \zeta^2 dx
\le   C \int_U K(|\nabla p|) |\nabla p|^{2}|\nabla \zeta|^2 dx+C\int_U | p_t|^2 (1+|\nabla   p|)^a \zeta^2 dx.
\end{aligned}
\eeq
Constructing appropriate $\zeta$ in \eqref{Hess1} with $\zeta\equiv 1$ on $U'$ and supp $\zeta \subset V$ we obtain 
\beq\label{Hess2}
  \int_{U'} K(|\nabla p|) |\nabla^2 p|^2 dx\le C  (1+  \| \nabla p\|_{L^\infty(V)})^a \int_{ V }[|\nabla p|^{2-a}+|p_t|^2]dx  .
\eeq
For $x\in U'$,
\beq\label{Hess3}
K(|\nabla p|) \ge  C (1+|\nabla p|)^{-a}\ge  C ((1+\norm{\nabla p}_{L^\infty(V)})^{-a}
\eeq
 From \eqref{Hess2}, \eqref{Hess3} and Young's inequality we obtain \eqref{mainHess}.     
\end{proof}

Now, we combine Lemma \ref{HessLem} with estimates in section \ref{revision} and subsection \ref{subgrad2} to obtain particular bounds for the Hessian.
Let 
\begin{align*}
s_4&=as_3+1,\\
\kappa_{10}&= a\kappa_4+1,\quad
\kappa_{11}= a s_3(\kappa_5+\alpha/2)+\alpha/2,\quad   
\kappa_{12}= a s_3 \kappa_5+\alpha/2.
\end{align*}
\begin{theorem}\label{theo72} 
Let $U'\Subset U$. 

{\rm (i)} If $t\in(0,2)$ then
\begin{multline}\label{small8}
\norm{\nabla^2 p(t) }_{L^2(U')}
\le C t^{-\kappa_{10}/2} (1+\|\bar p_0\|_{L^\alpha})^{\kappa_{11}} 
\Big(1+\int_U H(|\nabla p_0(x)|) dx\Big)^{1/2}\\
\cdot \Big(1+Env A(\alpha,t)^\frac{1}{\alpha-a} + \|\Psi\|_{L^\alpha(U\times(0,t))}\Big)^{\kappa_{12}}
 \Big( 1+\int_0^t G_4(\tau)d\tau\Big)^{s_4/2}.
\end{multline}

{\rm (ii)} If $t\ge 2$ then 
\begin{multline}\label{t8}
\norm{\nabla^2 p(t) }_{L^2(U')}
\le C (1+\|\bar p_0\|_{L^\alpha})^{\kappa_{11}}  \Big(1+ [ Env A(\alpha,t)]^\frac{1}{\alpha-a}+ \|\Psi\|_{L^\alpha(U\times(t-2,t))}\Big )^{\kappa_{11}}\\
\cdot \Big(1+\int_{t-1}^t G_4(\tau)d\tau\Big)^{s_4/2}.
\end{multline}

{\rm (iii)} If $A(\alpha)<\infty$ then 
\begin{multline}\label{lim8}
\limsup_{t\to\infty} \norm{\nabla^2 p }_{L^2(U')}
\le C \Big (1+A(\alpha)^\frac{1}{\alpha-a}+\limsup_{t\to\infty} \|\Psi\|_{L^\alpha( U\times (t-1,t))}\Big)^{\kappa_{11}}\\
\cdot\Big(1+ \limsup_{t\to\infty} \int_{t-1}^tG_4(\tau)d\tau\Big)^{s_4/2}. 
\end{multline}

{\rm (iv)} If $\beta(\alpha)<\infty$ then there is $T>0$ such that for $t>T$,
\begin{multline}\label{large8}
\norm{\nabla^2 p(t) }_{L^2(U')}
\le C \Big( 1 + \beta(\alpha)^\frac{1}{\alpha-2a} + \sup_{[t-3,t]}A(\alpha,\cdot)^\frac{1}{\alpha-a} 
 + \|\Psi\|_{L^\alpha( U\times (t-3,t) )}\Big)^{\kappa_{11}}\\
\cdot \Big(1+\int_{t-2}^t G_4(\tau)d\tau\Big)^{s_4/2}.
\end{multline}
\end{theorem}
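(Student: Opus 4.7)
The core idea is that Lemma \ref{HessLem} reduces every part of the theorem to three independent ingredients evaluated at time $t$: the interior $L^\infty$-norm of $\nabla p(t)$ raised to the power $a$, the integral $\int_U |\nabla p(t)|^{2-a}dx$, and the integral $\int_U |p_t(t)|^2 dx$. The first is already pinned down by Theorem \ref{theo57} (for finite $t$) or Theorem \ref{theo58} (for large/asymptotic $t$). The second is controlled by the lower bound in \eqref{Hcompare}, $\xi^{2-a}\le C(1+H(\xi))$, which lets me invoke the $H$-estimates of Section \ref{revision}. The third is handled by the $L^2$-bounds on $p_t=\bar p_t+\Psi_t$, noting that $\int_U |\Psi_t|^2 dx\le G_3(t)\le G_4(t)$ throughout. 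So the whole proof is a substitute-and-collect-exponents exercise.

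For part (i), with $t\in(0,2)$, I raise \eqref{Gradps} to the power $a$ to produce the factor $(1+\|\nabla p(t)\|_{L^\infty(V)})^a\le C t^{-a\kappa_4/2}\cdots$, bound $\int_U|\nabla p(t)|^{2-a}dx$ via the inequality \eqref{intpt}, and bound $\int_U|p_t(t)|^2 dx$ via \eqref{newHpt2}. The prefactor $(1+t^{-1})^{1/2}$ coming from \eqref{newHpt2} combines with $t^{-a\kappa_4/2}$ to yield exactly $t^{-(a\kappa_4+1)/2}=t^{-\kappa_{10}/2}$. The exponents of $(1+\|\bar p_0\|_{L^\alpha})$, of $[Env A(\alpha,t)]^{1/(\alpha-a)}+\|\Psi\|_{L^\alpha(U\times(0,t))}$, and of $(1+\int_0^t G_j\,d\tau)$ are designed so that the sum of $a$ times the gradient-exponent and $1/2$ times the $p_t$-exponent matches the defined values $\kappa_{11},\kappa_{12},s_4=as_3+1$; the $H(|\nabla p_0|)$ contribution is inherited from \eqref{intpt} and \eqref{newHpt2}.

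For part (ii), with $t\ge 2$, I replace \eqref{Gradps} by \eqref{GradpL}, use \eqref{t6} (equivalently \eqref{all1} combined with \eqref{pbar:ineq1}) to bound $\int_U H(|\nabla p(t)|)dx$ uniformly in $t$, and use \eqref{ptUni} together with \eqref{pbar:ineq1} (applied at $t-1$) for $\int_U|p_t(t)|^2 dx$. No $t^{-1}$ factor now appears, which is why \eqref{t8} has only the polynomial-type structure without any $t^{-\kappa_{10}/2}$ prefactor. Parts (iii) and (iv) are obtained in the same way, except the gradient factor is drawn from Theorem \ref{theo58} (either \eqref{ptwGradp} or \eqref{ptwGradpL}), and the $H$- and $p_t^2$-integrals are controlled by \eqref{lim1}--\eqref{lim2}, respectively \eqref{large1}--\eqref{large2}, from Corollary \ref{UGcor}. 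Taking $\limsup$ (part iii) or restricting to $t\ge T$ with $T$ from Theorem \ref{HIKS4.3}(iii) (part iv) commutes with the finite product that Lemma \ref{HessLem} yields.

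The only nontrivial aspect is arithmetic: one must verify that raising the gradient estimates to the power $a$ and multiplying by the square roots of the $H$- and $p_t^2$-estimates produces exactly the exponents $s_4$, $\kappa_{10}$, $\kappa_{11}$, $\kappa_{12}$ as defined. For instance, the coefficient of $(1+[Env A]^{1/(\alpha-a)}+\|\Psi\|_{L^\alpha})$ in part (i) should be $as_3\kappa_5$ from $\eqref{Gradps}^a$ plus $\alpha/2$ from the square root of the $[Env A]^{\alpha/(\alpha-a)}$ term inside \eqref{newHpt2}, giving exactly $\kappa_{12}$; analogously $\kappa_{11}=as_3(\kappa_5+\alpha/2)+\alpha/2$ arises in part (ii) because the gradient estimate \eqref{GradpL} itself already carries the extra $\alpha/2$. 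This bookkeeping, while tedious, is the only real obstacle; once it is carried through, the four estimates \eqref{small8}--\eqref{large8} follow by direct substitution.
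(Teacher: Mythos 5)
Your proposal follows the paper's proof essentially verbatim: apply \eqref{mainHess} of Lemma \ref{HessLem}, then control the three factors by \eqref{Gradps}/\eqref{GradpL}/\eqref{ptwGradp}/\eqref{ptwGradpL} for $(1+\|\nabla p(t)\|_{L^\infty(V)})^a$, by \eqref{intpt}/\eqref{t6}/\eqref{lim1}/\eqref{large1} together with \eqref{Hcompare} for $\int_U|\nabla p(t)|^{2-a}dx$, and by \eqref{newHpt2}/\eqref{ptUni}/\eqref{lim2}/\eqref{large2} for $\int_U|\bar p_t(t)|^2 dx$. The exponent bookkeeping ($\kappa_{10}=a\kappa_4+1$, $\kappa_{11}=as_3(\kappa_5+\alpha/2)+\alpha/2$, $\kappa_{12}=as_3\kappa_5+\alpha/2$, $s_4=as_3+1$) is stated correctly.

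One small point deserves care: when you decompose $p_t=\bar p_t+\Psi_t$, the factor $\int_U |p_t(t)|^2dx$ in \eqref{mainHess} produces a \emph{pointwise-in-$t$} term $\int_U|\Psi_t(x,t)|^2 dx$. Your proposed bound $\int_U|\Psi_t|^2dx\le G_3(t)\le G_4(t)$ is true, but $G_4(t)$ at the single time $t$ is not in general dominated by $\int_0^t G_4(\tau)d\tau$ (or by $\int_{t-1}^t G_4(\tau)d\tau$), which is what appears in \eqref{small8}--\eqref{large8}; so this route does not close the estimate as written. The paper instead uses $\alpha\ge 2$ and the definition of $A(\alpha,t)$ to get $\int_U|\Psi_t(x,t)|^2dx\le C\big(\int_U|\Psi_t(x,t)|^\alpha dx\big)^{2/\alpha}\le C\big(1+A(\alpha,t)^{\alpha/(\alpha-a)}\big)$, which is then absorbed by the $Env\,A(\alpha,t)^{1/(\alpha-a)}$ factor (or $A(\alpha)$, $\sup A(\alpha,\cdot)$, as appropriate) already present in the final bound. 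With this one-line substitution in place of your $G_4(t)$ remark, your argument matches the paper's proof exactly.
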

\begin{proof}
(i) Using \eqref {mainHess},  \eqref{Gradps}, \eqref{intpt} and \eqref{newHpt2}:
\begin{multline*}
\norm{\nabla^2 p(t) }_{L^2(U')}
\le C t^{-a\kappa_4/2} (1+\|\bar p_0\|_{L^\alpha})^{a s_3(\kappa_5+1)}
\Big(1+Env A(\alpha,t)^\frac{1}{\alpha-a} + \|\Psi\|_{L^\alpha(U\times(0,t))}\Big)^{a s_3 \kappa_5}\\
\cdot \Big( 1+\int_0^t G_1(\tau)d\tau\Big)^{a s_3/2}
\Big\{ t^{-1} \Big(1+\int_U |\bar p_0(x)|^\alpha dx+ \int_U H(|\nabla p_0(x)|) dx \\
+ [Env A(\alpha,t)]^\frac {\alpha}{\alpha-a}+\int_U |\Psi_t(x,t)|^2dx+  \int_{0}^t G_4(\tau)  d\tau\Big)\Big\}^{1/2}.
\end{multline*}
Note that  $\alpha\ge 2$, and 
\beqs
\int_U |\Psi_t(x,t)|^2dx \le C\Big(\int_U |\Psi_t(x,t)|^\alpha dx\Big)^\frac{2}{\alpha}
\le C A(\alpha,t)^\frac{2(1-a)}{\alpha-a} \le C(1+A(\alpha,t)^\frac{\alpha}{\alpha-a}).
\eeqs
Then \eqref{small8} follows.

(ii) Using \eqref {mainHess},  \eqref{GradpL}, \eqref{t6}, \eqref{ptUni} and \eqref{pbar:ineq1}:
\begin{align*}
&\norm{\nabla^2 p(t) }_{L^2(U')}
\le C (1+\|\bar p_0\|_{L^\alpha})^{a s_3(\kappa_5+\alpha/2)}  \Big(1+ [ Env A(\alpha,t)]^\frac{1}{\alpha-a}+ \|\Psi\|_{L^\alpha(U\times(t-2,t))}\Big )^{a s_3(\kappa_5+\alpha/2)}\\
&\quad \cdot \Big(1+\int_{t-1}^t G_1(\tau)d\tau\Big)^{a s_3/2}\\
&\quad \cdot \Big(1+ \int_U|\bar{p}_0(x)|^{\alpha} dx
+ [ Env A(\alpha,t-1)]^\frac{\alpha}{\alpha-a} +\int_U |\Psi_t(x,t-1)|^2dx+\int_{t-1}^tG_4(\tau)d\tau\Big )^{1/2} .
\end{align*}
Then \eqref{t8} follows.

(iii) Using \eqref {mainHess},  \eqref{ptwGradp}, \eqref{lim1} and \eqref{lim2}:
\begin{multline*}
\limsup_{t\to\infty} \norm{\nabla^2 p }_{L^2(U')}
\le C \Big (1+A(\alpha)^\frac{1}{\alpha-a}+\limsup_{t\to\infty} \|\Psi\|_{L^\alpha( U\times (t-1,t))}\Big)^{a  s_3(\kappa_5+\alpha/2)}\\
\cdot\Big(1+ \limsup_{t\to\infty} \int_{t-1}^tG_1(\tau)d\tau\Big)^{a s_3/2}
 \Big (1+A(\alpha)^\frac {\alpha} {\alpha-a} +\limsup_{t\to\infty} \int_U |\Psi_t(x,t)|^2 dx +\limsup_{t\to\infty} \int_{t-1}^tG_4(\tau)d\tau\Big)^{1/2} .
\end{multline*}
 Then \eqref{lim8} follows.

(iv) Using \eqref {mainHess}, \eqref{ptwGradpL}, \eqref{large1} and \eqref{large2}:
\begin{multline*}
\norm{\nabla^2 p(t) }_{L^2(U')}
\le C \Big( 1 + \beta(\alpha)^\frac{1}{\alpha-2a} + \sup_{[t-3,t]}A(\alpha,\cdot)^\frac{1}{\alpha-a} 
 + \|\Psi\|_{L^\alpha( U\times (t-3,t) )}\Big)^{a  s_3(\kappa_5+\alpha/2)}\\
\cdot \Big(1+\int_{t-2}^t G_1(\tau)d\tau\Big)^{a s_3/2}
\Big (1+\beta(\alpha)^\frac {\alpha}{\alpha-2a} + A(\alpha,t-1)^\frac{\alpha}{\alpha-a} +\int_{t-1}^tG_4(\tau)d\tau +\int_U |\Psi_t(x,t)|^2 dx\Big )^{1/2} .
\end{multline*}
Then \eqref{large8} follows.
\end{proof}

\appendix
\myclearpage
 \section{Appendix}\label{ElePfs}
\begin{proof}[Proof of Lemma \ref{Sob4}]
In this proof, we use notation $\|\cdot\|_{L^p}$ to denote $\|\cdot\|_{L^p(U)}$.
First, we have the following Poincar\'e-Sobolev inequality
\beq\label{L1S}
\| \phi\|_{L^{r^*}}\le C \|\nabla \phi\|_{L^r}+C\delta\| \phi\|_{L^1}.
\eeq
Let $r=2-a$ and $\phi=|u|^m$, where $m=(\alpha-a)/(2-a)\ge 1$. 
Applying \eqref{L1S} to $\phi$,  we obtain
\beq\label{pre1}
\|u\|_{L^{m r^*}}\le C\Big( \int_U |u|^{\alpha-2 }|\nabla u|^{2-a} dx\Big)^\frac{1}{\alpha-a} +C\delta\| u\|_{L^m}
\le C\Big (\int_U |u|^{\alpha-2 }|\nabla u|^{2-a} dx\Big)^\frac{1}{\alpha-a} +C\delta\| u\|_{L^\alpha}. 
\eeq
The last inequality comes from H\"older's inequality and the fact $\alpha > m$.
Consider a number $p\in (\alpha,mr^*)$. Let $\theta\in(0,1)$ such that
\beq\label{psplit}
\frac 1 p =\frac\theta \alpha + \frac{1-\theta}{mr^*}.
\eeq
Combining interpolation inequality and \eqref{pre1} yields
\begin{align*}
\|u\|_{L^p}^p
&\le \|u\|_{L^\alpha}^{\theta p} \|u\|_{L^{mr^*}}^{(1-\theta)p}
\le C \|u\|_{L^\alpha}^{\theta p} \Big\{ \Big (\int_U |u|^{\alpha-2 }|\nabla u|^{2-a} dx\Big)^\frac{1}{\alpha-a} +C\delta\| u\|_{L^\alpha}\Big\}^{(1-\theta)p}\\
&\le C \delta\|u\|_{L^\alpha}^{p} + \|u\|_{L^\alpha}^{\theta p} \Big (\int_U |u|^{\alpha-2 }|\nabla u|^{2-a} dx\Big)^\frac{(1-\theta)p}{\alpha-a}.
\end{align*}
In the preceding inequality, selecting $(1-\theta)p=\alpha-a$, letting $u=u(x,t)$, and integrating in $t$ from $0$ to $T$  give
\begin{align*}
\int_0^T \|u(t)\|_{L^p}^pdt
&\le C \delta T \esssup_{[0,T]}  \|u\|_{L^\alpha}^{p} + C \esssup_{[0,T]} \|u\|_{L^\alpha}^{\theta p} \int_0^T \int_U |u|^{\alpha-2 }|\nabla u|^{2-a} dx dt\\
& \le C\delta T [[u]]^p + C [[u]]^{\theta p} [[u]]^{(1-\theta)p}=C(1+\delta T)[[u]]^p.
\end{align*}
Therefore \eqref{nonzerobdn} follows. 
It remains to calculate $p$ and check sufficient conditions. 
Note that $\theta p=p-\alpha+a$, and from \eqref{psplit}, we derive
$$ 1=\frac{\theta p}{\alpha}+\frac{(1-\theta)p}{mr^*} =\frac{p-\alpha+a}{\alpha}+\frac{\alpha-a}{mr^*}.$$
Solving for $p$ from this gives
$ p=\alpha +(\alpha-a)(1-\frac{\alpha}{mr^*})$.
Simple calculations yield $p$ in the form of \eqref{expndef}.
It is elementary to verify that the second condition in \eqref{alcond} guarantees $\alpha<p<mr^*$. 

In case $U$ is a ball $B_R(x_0)$, the inequality with \eqref{uR} follows from the scaling $y=(x-x_0)/R$ for $x\in B_R(x_0)$ and \eqref{nonzerobdn} for $B_1(0)$. 
\end{proof}

\textbf{Acknowledgement.} The authors would like to thank Tuoc Phan and Akif Ibragimov for their suggestions and helpful discussions.
L. H. acknowledges the support by NSF grant DMS-1412796.

\myclearpage

\def\cprime{$'$}

 \bibliographystyle{abbrv}

\end{document}